\newcommand{\R}{\mathbb{R}}
\newcommand{\N}{\mathbb{N}}
\newcommand{\cA}{\mathcal{A}}
\newcommand{\cE}{\mathcal{E}}
\newcommand{\cF}{\mathcal{F}}
\newcommand{\cN}{\mathcal{N}}
\newtheorem{theorem}{Theorem}[section]
\newtheorem{lemma}[theorem]{Lemma}
\newtheorem{proposition}[theorem]{Proposition}
\theoremstyle{remark}
\newtheorem{example}[theorem]{Example}
\newtheorem{definition}[theorem]{Definition}
\begin{document}

\title[]
{Stability of travelling waves in stochastic bistable reaction-diffusion equations}
\author{Wilhelm Stannat}
\address{%
Institut f\"ur Mathematik\\
Technische Universit\"at Berlin \\
Stra{\ss}e des 17. Juni 136\\
D-10623 Berlin\\
and\\
Bernstein Center for Computational Neuroscience\\
Philippstr. 13\\
D-10115 Berlin\\
Germany}
\email{stannat@math.tu-berlin.de}
\date{Berlin, April 14, 2014}

\begin{abstract}
We prove stability of travelling waves for stochastic bistable reaction-diffusion equations with both additive and multiplicative noise, using a 
variational approach based on functional inequalities. Our analysis yields explicit estimates on the rate of stability that can be shown in 
special examples to be optimal. 
\end{abstract}

\keywords{stochastic reaction-diffusion equations, travelling wave, metastability, functional inequalities, ground state}
\subjclass{60H15, 35R60, 35B35, 35K55 92A09}

\maketitle

\section{Introduction} 

\noindent 
The purpose of this paper is to generalize the main results of \cite{St} on the stability of travelling waves  
in Nagumo equation with multiplicative noise to general bistable reaction diffusion equations with noise. To this 
end let us first consider the deterministic reaction-diffusion equation 
\begin{equation} 
\label{RDE} 
\partial_t v (t,x) = \nu v_{xx}(t,x) + b f(v(t,x)) \, , \quad v(t,x) = v_0 (x)
\end{equation} 
for $(t,x)\in\R_+\times\R$. Here, $f: \R\to\R$ is a continuously differentiable function satisfying  
\begin{equation}
\tag{\bf A1}
\begin{aligned}
& f (0)  = f(a) = f (1) = 0  \quad \text{ for some }  a \in (0,1) \\
& f(x) < 0 \quad \text{ for }  x \in (0,a) \, , f(x) > 0 \text{ for } x \in (a,1) \\
& f^\prime (0) < 0 , f^\prime (a) > 0, f^\prime (1) < 0\, .  
\end{aligned}
\end{equation}  
Theorem 12 in \cite{HR} implies for $\nu$, $b > 0$ the existence of a travelling wave connecting the stable fixed points 
$0$ and $1$ of the reaction term, i.e., a monotone increasing $C^2$ function $\hat{v}$ satisfying 
$$ 
c \hat{v}_x = \nu \hat{v}_{xx} + bf (\hat{v}) 
$$ 
for some wavespeed $c\in\R$ and boundary conditions $\hat{v} (- \infty) = 0$, $\hat{v} (+ \infty) = 1$. It follows that 
$\hat{v}(t) := \hat{v} (\cdot + ct)$ and all its spatial 
translates $\hat{v} (\cdot + x_0 + ct)$ are solutions of \eqref{RDE}. A particular example is the Nagumo equation with 
$f(v) = v (1-v)(v -a)$ where the travelling wave is explicitely given by $\hat{v}(x) = \left( 1 + e^{- \sqrt{\frac{b}{2v}} x}\right)^{-1}$. 

\medskip 
\noindent 
It is known that the wave speed $c$ and the integral $\int_0^1 f(v)\, dv \ge 0$ have the same sign and that in particular $c=0$ if and only if 
$\int_0^1 f(v)\, dv = 0$. To simplify the presentation of our results we will therefore assume from now on that 
\begin{equation}
\tag{\bf A2}
\int_0^1 f(v)\, dv \ge  0 
\end{equation} 
hence that the wave speed $c$ is nonnegative. 

\medskip 
\noindent 
So far the assumptions on the reaction term $f$ are classical. The existing results in the literature on the stability of the travelling 
wave can be divided up into results based on maximum principle and comparison techniques, see in particular \cite{FMcL} 
for a stability result w.r.t. initial conditions $v_0$ satisying $0\le v_0\le 1$, $\liminf_{x\to-\infty} v_0 (x) < a$ and 
$\limsup_{x\to\infty} v_0 (x) > a$, and results w.r.t. $L^2$- or $H^{1,2}$-norms, based on spectral information on the linearization of \eqref{RDE} 
along the travelling wave $\hat{v}$ (see, e.g. \cite{Henry, OR}). Whereas the first approach is not appropriate for stochastic  perturbations, unless 
the noise terms would satisfy unnatural monotonicity conditions, the second approach can be in principle generalized to the stochastic case. 
However, in order to do this, the existing spectral information on the linearization of \eqref{RDE} has to be considerably refined. Abstract 
perturbation results on the spectral gap below the eigenvalue corresponding to the travelling wave cannot be easily generalized to the stochastic case. 
We will therefore use functional inequalities to derive Lyapunov stability of the travelling wave in the space $L^2 (\R)$. To be more precise, we will show in 
Theorem \ref{th1} under the following additional assumptions on the reaction term 

\begin{equation} 
\tag{\bf A3} 
\begin{aligned} 
& \exists v_\ast\in (a, 1) \mbox{ such that } f'' (v) > 0 \, (\mbox{  resp. } < 0) \\ 
& \mbox{ on } [0 , v_\ast) (\mbox{ resp. }(v_\ast , 1]) 
\end{aligned} 
\end{equation} 
saying that $f$ is strictly convex on $[0, v_\ast )$ and strictly concave on $(v_\ast , 1]$, that the $L^2$-norm is a Lyapunov 
function restricted to the orthogonal complement of $\hat{v}_x$. As a consequence of this phase-space stability, the stochastic case will 
become much easier to investigate. Our assumptions are satisfied in the case of the Nagumo equation (for all $a\in (0,1)$) and do not require any estimates 
on the unknown wave speed $c$.  

\medskip 
\noindent 
Our interest in the above reaction diffusion equation is motivated 
by the fact that \eqref{RDE} can be seen as a singular limit $\epsilon\downarrow 0$ of Fitz-Hugh Nagumo systems 
$$ 
\begin{aligned} 
\partial_t v (t,x) & = \nu v_{xx}(t,x) + b f(v(t,x)) - w(t,x) + I \\
\partial_t w (t,x) & = \varepsilon ( v (t,x) -\gamma w(t,x)) 
\qquad (t,x)\in\R_+\times\R 
\end{aligned} 
$$ 
when the adaptation variable $w$ is set constant to the value of the input current $I$ (see the monograph \cite{ET}). The Fitz-Hugh Nagumo system, a mathematical idealization of the Hodgkin Huxley model, 
admits, under appropriate assumptions on the coefficients, pulse 
solutions that serve as a mathematical model for the action potential travelling along the nerve axon. By adding noise to this system, e.g. channel noise, the resulting dynamical system exhibits many interesting features like propagation failure of the pulse solution, backpropagation, annihilation and spontaneous pulse solutions. Recent computational studies can be found in \cite{T2010, T2011}. 

\medskip 
\noindent 
We are therefore interested in a rigorous mathematical analysis of 
stochastic reaction-diffusion systems with bistable reaction terms. 
With a view towards the above mentioned features of the noisy system, 
we are in particular interested to establish a multiscale analysis of the whole dynamics which requires in a first step a robust 
stability result of the travelling pulse solution. As already 
mentioned for the scalar-valued case, the existing stability results (e.g. \cite{Ev, Jones} for systems) cannot be carried over to the stochastic case. In order to reduce the mathematical difficulty of the problem, we therefore consider the scalar-valued case in the present paper as a starting point.

\medskip 
\noindent 
Before we proceed let us first draw a couple of conclusions on the travelling wave resulting from our assumptions. 
\begin{lemma} 
\label{lem1_1}
Assume that {\bf (A1)} and {\bf (A2)} hold. Then:  
\begin{itemize}  
\item[(i)] $\hat{v}^2_x (x) \le \frac{2b}{\nu} \int^1_{\hat{v}(x)} f (v)\, dv$ for all $x$. In particular, 
$$ 
\lim_{x\to + \infty} e^{- \alpha \frac{c}{\nu}x} \hat{v}^2_x = 0 \qquad\mbox{ for } \alpha \ge 0\, .  
$$
\item[(ii)] $e^{- 2 \frac{c}{\nu}x} \hat{v}^2_x$ is increasing (resp. decreasing) for $x\le\hat{v}^{-1} (a)$ (resp. $x\ge \hat{v}^{-1} (a)$). 
In particular, 
$$ 
\lim_{x\to\pm\infty} e^{- \alpha\frac{c}{\nu} x } \hat{v}_x^2 = 0 \qquad\mbox{ for } \alpha \in\, [0, 2[ \, .  
$$ 
\end{itemize} 
\end{lemma}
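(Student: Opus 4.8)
The plan is to extract the stated estimates from the first-order (Hamiltonian-type) structure hidden in the profile equation $c\hat v_x=\nu\hat v_{xx}+bf(\hat v)$: part (i) comes from testing this equation against $\hat v_x$, and part (ii) from differentiating the exponentially weighted square $e^{-2\frac c\nu x}\hat v_x^2$. Throughout I would use the standard properties of a bistable travelling profile that accompany the existence statement of \cite{HR}: $\hat v$ is strictly increasing, so $\hat v\colon\R\to(0,1)$ is a bijection and $\hat v^{-1}(a)$ is well defined; and, since $(\hat v,\hat v_x)$ is a heteroclinic orbit joining the hyperbolic saddles $(0,0)$ and $(1,0)$ of the planar system $\hat v'=p,\ p'=\tfrac1\nu(cp-bf(\hat v))$, one has $\hat v_x(\pm\infty)=0$, $\hat v_x$ bounded, and $\hat v_x\in L^2(\R)$.

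For (i) set $F(v):=\int_0^v f(s)\,ds$; multiplying the profile equation by $\hat v_x$ gives $\tfrac\nu2(\hat v_x^2)'=c\,\hat v_x^2-b\,(F\circ\hat v)'$. Integrating from $x$ to $+\infty$ and using $\hat v_x(+\infty)=0$, $\hat v(+\infty)=1$ yields
\[
\tfrac\nu2\,\hat v_x^2(x)=b\int_{\hat v(x)}^1 f(v)\,dv-c\int_x^{\infty}\hat v_y^2\,dy .
\]
Since $c\ge0$ by \textbf{(A2)} and $\hat v_y^2\ge0$, discarding the last term gives precisely the asserted inequality. The limit at $+\infty$ follows at once: $\hat v(x)\to1$ forces $\int_{\hat v(x)}^1 f\,dv\to0$, hence $\hat v_x^2(x)\to0$, while $e^{-\alpha\frac c\nu x}\le1$ for $x\ge0$ whenever $\alpha,c\ge0$.

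For (ii) put $g(x):=e^{-2\frac c\nu x}\hat v_x^2(x)$. Writing the profile equation as $\hat v_{xx}-\tfrac c\nu\hat v_x=-\tfrac b\nu f(\hat v)$ and differentiating,
\[
g'(x)=2\,e^{-2\frac c\nu x}\,\hat v_x\Bigl(\hat v_{xx}-\tfrac c\nu\hat v_x\Bigr)=-\tfrac{2b}\nu\,e^{-2\frac c\nu x}\,\hat v_x\,f(\hat v(x)).
\]
Here $\hat v_x>0$ by strict monotonicity, and by \textbf{(A1)} together with $\hat v(-\infty)=0$, $\hat v(+\infty)=1$ we have $f(\hat v(x))<0$ for $x<\hat v^{-1}(a)$ and $f(\hat v(x))>0$ for $x>\hat v^{-1}(a)$; hence $g'>0$ left of $\hat v^{-1}(a)$ and $g'<0$ right of it, which is the claimed monotonicity. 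Consequently $g(x)\le g(\hat v^{-1}(a))=:C<\infty$ for all $x\le\hat v^{-1}(a)$, i.e. $\hat v_x^2(x)\le C\,e^{2\frac c\nu x}$ there, so for $\alpha\in[0,2)$ one gets $e^{-\alpha\frac c\nu x}\hat v_x^2(x)\le C\,e^{(2-\alpha)\frac c\nu x}\to0$ as $x\to-\infty$, while the limit at $+\infty$ was already obtained in (i). (If $c=0$ then \textbf{(A2)} forces $\int_0^1 f\,dv=0$, so $\int_{\hat v(x)}^1 f\,dv\to0$ as $x\to-\infty$ too, and everything above persists with the exponential weights equal to $1$.)

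The only part that is not a one- or two-line computation is the boundary input used above — $\hat v_x(\pm\infty)=0$ and $\hat v_x\in L^2(\R)$, which legitimise the integration to $+\infty$ in step (i). This I would not reprove: it is classical for bistable profiles, following from the saddle-point structure of the reaction dynamics at $0$ and $1$ (equivalently $f'(0),f'(1)<0$ in \textbf{(A1)}), which forces exponential approach of $\hat v$ to its limits.
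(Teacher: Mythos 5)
Your proof is correct and the two computations at its core are the same as the paper's: part (i) is the energy identity obtained by multiplying the profile equation by $\hat v_x$ and integrating, and part (ii) is exactly the sign analysis of $\frac{d}{dx}\bigl(e^{-2\frac c\nu x}\hat v_x^2\bigr)=-\tfrac{2b}\nu e^{-2\frac c\nu x}f(\hat v)\hat v_x$, including the correct treatment of the $c=0$ case via $\int_0^1 f\,dv=0$. The one place you diverge is the boundary input for (i): you import $\hat v_x(+\infty)=0$ and $\hat v_x\in L^2$ from the classical saddle-point analysis of the profile, whereas the paper avoids both by integrating only up to a point $x_n$ of a sequence along which $\hat v_x(x_n)\to0$ (such a sequence exists for free, since $\hat v_x\ge0$ and $\int_{\R}\hat v_x\,dx=1$ give $\hat v_x\in L^1$) and discarding the term $-2\tfrac c\nu\int_x^{x_n}\hat v_x^2$ by its sign; the full limit $\hat v_x(+\infty)=0$ is then a consequence of the resulting inequality rather than a hypothesis. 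Your route is legitimate since the facts you cite are indeed standard for bistable fronts, but the paper's subsequence trick keeps the lemma self-contained and costs nothing; if you want to close your own gap without appealing to the phase-plane analysis, that is the observation to use.
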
 

\medskip 
\noindent 
The proof of Lemma \ref{lem1_1} is given in Section \ref{Section0} below. The next Proposition summarizes the main conclusions implied by the additional assumption {\bf (A3)}.

\begin{proposition} 
\label{prop0} 
Assume that {\bf (A1)} - {\bf (A3)} hold. Then: 
\begin{itemize} 
\item[(i)] $\frac{f(\hat{v})}{\hat{v}_x}$ is strictly monotone increasing. In particular, 
$$ 
-\frac{d^2}{dx^2} \log{\hat{v}_x} = -\frac{d}{dx} \frac{\hat{v}_{xx}}{\hat{v}_x} =  \frac b\nu \frac{d}{dx}\frac{f(\hat{v})}{\hat{v}_x} > 0 \, , 
$$  
i.e., $\hat{v}_x$ is strictly log-concave (but not uniformly). 
\item[(ii)] 
$$ 
\begin{aligned} 
\gamma_- & := \inf \frac b\nu \frac{f(\hat{v})}{\hat{v}_x} = \frac{c}{2\nu} - \sqrt{\left(\frac{c}{2\nu}\right)^2 - \frac b\nu   f^\prime(0)} \\
\gamma_+ & := \sup \frac b\nu \frac{f(\hat{v})}{\hat{v}_x} = \frac{c}{2\nu} + \sqrt{\left(\frac{c}{2\nu}\right)^2 - \frac b\nu   f^\prime(1)}\, . 
\end{aligned} 
$$
\item[(iii)]  
$$
\begin{aligned} 
\int_{-\infty}^0  e^{-2\alpha\frac c\nu x}\left( \hat{v}_x^2 + \hat{v}_{xx}^2\right)\, dx < \infty \qquad\mbox{ for all } \alpha \frac c\nu  < \frac c\nu - \gamma_-  \\ 
\int_0^\infty     e^{-2\alpha\frac c\nu x}\left( \hat{v}_x^2 + \hat{v}_{xx}^2\right)\, dx < \infty \qquad\mbox{ for all } \alpha \frac c\nu  > \frac c\nu - \gamma_+  \\ 
\end{aligned} 
$$
In particular, 
$$ 
\int e^{-\frac c\nu x}\left( \hat{v}_x^2 + \hat{v}_{xx}^2\right)\, dx < \infty\, . 
$$ 
\end{itemize} 
\end{proposition}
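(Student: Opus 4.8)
The plan is to study throughout the function $g(x):=f(\hat v(x))/\hat v_x(x)$, since part (i) is exactly the assertion that $g$ is strictly increasing, and parts (ii)--(iii) are quantitative consequences of that together with the exponential decay of $\hat v_x$ at $\pm\infty$. First I would record the behaviour of the wave near the rest points. The travelling wave is the heteroclinic orbit of the planar system $\hat v'=\hat v_x$, $\hat v_x'=\tfrac1\nu(c\hat v_x-bf(\hat v))$ running from the equilibrium $(0,0)$ to $(1,0)$; both are hyperbolic saddles, the characteristic polynomial at $j\in\{0,1\}$ being $\lambda^2-\tfrac c\nu\lambda+\tfrac b\nu f'(j)$, which has one positive and one negative root because $f'(0),f'(1)<0$. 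Standard unstable/stable manifold asymptotics then give $\hat v_x(x)\sim C_-e^{\lambda_-x}$ as $x\to-\infty$ with $\lambda_-=\tfrac c{2\nu}+\sqrt{(\tfrac c{2\nu})^2-\tfrac b\nu f'(0)}$ (the positive root at $0$) and $\hat v_x(x)\sim C_+e^{\mu_+x}$ as $x\to+\infty$ with $\mu_+=\tfrac c{2\nu}-\sqrt{(\tfrac c{2\nu})^2-\tfrac b\nu f'(1)}$ (the negative root at $1$), with constants $C_\pm>0$; in particular $\hat v_{xx}/\hat v_x\to\lambda_-$ as $x\to-\infty$ and $\to\mu_+$ as $x\to+\infty$. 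Rewriting the wave equation as $g=\tfrac cb-\tfrac\nu b\,(\hat v_{xx}/\hat v_x)$ shows that $g$ is bounded, that $g'(\pm\infty)=0$, and that $\tfrac b\nu g\to\tfrac c\nu-\lambda_-=:\gamma_-$ and $\tfrac b\nu g\to\tfrac c\nu-\mu_+=:\gamma_+$ at $-\infty$ and $+\infty$ respectively; these are precisely the two expressions in (ii), and $\gamma_-<\tfrac c{2\nu}<\gamma_+$ since $\lambda_->\tfrac c{2\nu}>\mu_+$. (All of this remains valid when $c=0$.)

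For part (i), differentiating $g\,\hat v_x=f(\hat v)$ and using $\hat v_{xx}/\hat v_x=\tfrac c\nu-\tfrac b\nu g$ gives the Riccati relation $g'=f'(\hat v)-\tfrac c\nu g+\tfrac b\nu g^2$; the chain of equalities in the displayed formula of (i) is immediate from this, and the point is the strict positivity. Differentiating once more, $w:=g'$ solves the \emph{linear} first-order ODE $w'=h+qw$ with $h:=f''(\hat v)\,\hat v_x$ and $q:=\tfrac{2b}\nu g-\tfrac c\nu$. By {\bf (A3)} and $\hat v_x>0$ we have $h>0$ on $(-\infty,x_\ast)$ and $h<0$ on $(x_\ast,\infty)$, where $x_\ast:=\hat v^{-1}(v_\ast)$, while $q(-\infty)=2\gamma_--\tfrac c\nu=-2\sqrt{(\tfrac c{2\nu})^2-\tfrac b\nu f'(0)}<0$ and $q(+\infty)=2\gamma_+-\tfrac c\nu=2\sqrt{(\tfrac c{2\nu})^2-\tfrac b\nu f'(1)}>0$. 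With the integrating factor $\Phi(x):=\exp(\int_0^x q(s)\,ds)$ one has $(w/\Phi)'=h/\Phi$; the signs of $q$ at $\pm\infty$ force $\Phi\to+\infty$ at both ends, so (using $w(\pm\infty)=0$, boundedness of $h$, and $1/\Phi$ exponentially small there) the boundary terms vanish on integration and $w(x)/\Phi(x)=\int_{-\infty}^x h/\Phi=-\int_x^{\infty}h/\Phi$ for every $x$. For $x\le x_\ast$ the first integral has a strictly positive integrand and for $x\ge x_\ast$ the second a strictly negative one, so $w=g'>0$ everywhere, which is (i).

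Part (ii) is then immediate: since $g$ is strictly increasing, $\inf\tfrac b\nu g=\tfrac b\nu g(-\infty)=\gamma_-$ and $\sup\tfrac b\nu g=\tfrac b\nu g(+\infty)=\gamma_+$. For part (iii), the preliminary asymptotics give $\hat v_x^2+\hat v_{xx}^2\sim\mathrm{const}\cdot e^{2\lambda_-x}$ as $x\to-\infty$ (using $\hat v_{xx}/\hat v_x\to\lambda_-$) and $\sim\mathrm{const}\cdot e^{2\mu_+x}$ as $x\to+\infty$, and the integrand is bounded on compacta; hence $\int_{-\infty}^0 e^{-2\alpha\frac c\nu x}(\hat v_x^2+\hat v_{xx}^2)\,dx<\infty$ precisely when $\lambda_--\alpha\tfrac c\nu>0$, i.e. $\alpha\tfrac c\nu<\tfrac c\nu-\gamma_-$, and $\int_0^\infty e^{-2\alpha\frac c\nu x}(\hat v_x^2+\hat v_{xx}^2)\,dx<\infty$ precisely when $\mu_+-\alpha\tfrac c\nu<0$, i.e. $\alpha\tfrac c\nu>\tfrac c\nu-\gamma_+$. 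Finally $\alpha=\tfrac12$ is admissible on both half-lines because $\tfrac c\nu-\gamma_-=\tfrac c{2\nu}+\sqrt{(\tfrac c{2\nu})^2-\tfrac b\nu f'(0)}>\tfrac c{2\nu}$ and $\tfrac c\nu-\gamma_+=\tfrac c{2\nu}-\sqrt{(\tfrac c{2\nu})^2-\tfrac b\nu f'(1)}<\tfrac c{2\nu}$, which yields $\int e^{-\frac c\nu x}(\hat v_x^2+\hat v_{xx}^2)\,dx<\infty$.

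The main obstacle is part (i). The Riccati equation for $g$ by itself is not manifestly sign-definite, but one extra differentiation turns it into a \emph{linear} first-order equation for $w=g'$ whose forcing term $f''(\hat v)\hat v_x$ changes sign \emph{exactly once}, at $x_\ast$, by {\bf (A3)}; the integrating-factor representation then reads off the sign of $w$ on either side of $x_\ast$. The only additional ingredient is control of the limits of $g$ at $\pm\infty$ (equivalently, the signs of $q(\pm\infty)$), where the spectral input $f'(0),f'(1)<0$ is used; everything else is routine bookkeeping with the asymptotics established in the preliminary step.
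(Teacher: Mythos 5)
Your argument is correct, and for the heart of the matter --- part (i) --- it is in fact the paper's own argument in different clothing: if one sets $w_{\mathrm{gs}}:=e^{-\frac{c}{2\nu}x}\hat v_x$, then your quantity $g'/\Phi$ is, up to a positive multiplicative constant, exactly the paper's ``energy'' $w_{\mathrm{gs},x}^2+\bigl(\tfrac b\nu f'(\hat v)-(\tfrac c{2\nu})^2\bigr)w_{\mathrm{gs}}^2$, whose derivative is $\tfrac b\nu f''(\hat v)\,\hat v_x\,w_{\mathrm{gs}}^2$; both proofs then rest on the identical observation that a function vanishing at $\pm\infty$ whose derivative is positive on $(-\infty,x_\ast)$ and negative on $(x_\ast,\infty)$ must be positive everywhere. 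You reach this quantity by differentiating the Riccati equation for $g=f(\hat v)/\hat v_x$ rather than via the ground-state transform, which is arguably more transparent. Where you genuinely diverge is in the preliminaries and in part (ii): the paper obtains boundedness of $f(\hat v)/\hat v_x$ from an elementary comparison lemma (Lemma \ref{lem0_01}, exploiting the log-concavity of $\hat v_x$ already established) and then identifies the limits $\gamma_\pm$ by l'Hospital's rule once monotonicity guarantees the limits exist; you instead invoke the stable/unstable manifold asymptotics of the heteroclinic orbit at the hyperbolic saddles $(0,0)$ and $(1,0)$, which yields the limits of $\hat v_{xx}/\hat v_x$ (hence of $\tfrac b\nu g$) \emph{before} (i) is known --- and these limits are precisely what you need to control the signs of $q(\pm\infty)$ and the vanishing of the boundary terms in the integrating-factor identity. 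This buys a shorter, more self-contained route to (ii) and (iii) at the cost of importing the invariant-manifold theorem; the paper's route is more elementary but needs the extra Lemma \ref{lem0_01}.

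One small point of care: the sharp two-sided asymptotics $\hat v_x\sim C_\pm e^{\lambda x}$ with nonzero $C_\pm$ is more than you need and requires a little more than $C^1$ regularity of the planar vector field to justify the nonvanishing constant. For (iii) it suffices that $\hat v_{xx}/\hat v_x\to\lambda_-$ (resp.\ $\mu_+$), which already follows from tangency of the orbit to the unstable (resp.\ stable) eigendirection; this makes $e^{-\alpha\frac c\nu x}\hat v_x$ eventually monotone in the right direction and hence exponentially small at the relevant end, exactly as in the paper's proof. Combined with the global bound $\sup|\hat v_{xx}|/\hat v_x<\infty$ coming from the boundedness of $g$, this gives the stated integrability.
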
 

\medskip 
\noindent 
The proof of Proposition \ref{prop0} is given in Section \ref{Section0} below.

\bigskip 
\noindent 
The next theorem contains the essential functional inequality that is implied by {\bf (A3)}.  

\begin{theorem} 
\label{th0} 
Assume that {\bf (A1)} - {\bf (A3)} hold. Then there exists some $\kappa > 0$ such that 
\begin{equation} 
\label{FunctIneq}
- \frac{d^2}{dx^2} \log{\hat{v}_x} + \left( \frac d{dx} \log\hat{v}_x \right)^2 - \frac c\nu \frac d{dx}\log \hat{v}_x \ge \kappa\, . 
\end{equation} 
\end{theorem}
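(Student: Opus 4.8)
\noindent
The plan is to rewrite \eqref{FunctIneq} as an equivalent monotonicity statement about the wave profile and then to prove it by the convexity argument behind Proposition~\ref{prop0}(i).

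Put $h:=\tfrac b\nu\,\tfrac{f(\hat v)}{\hat v_x}$. Differentiating $\log\hat v_x$ and inserting the profile equation in the form $\hat v_{xx}/\hat v_x=\tfrac c\nu-h$ gives $\tfrac d{dx}\log\hat v_x=\tfrac c\nu-h$ and $-\tfrac{d^2}{dx^2}\log\hat v_x=h'$, so the left-hand side of \eqref{FunctIneq} equals $h'+h^2-\tfrac c\nu h$. One more differentiation, using the profile equation again, yields the identity $h'=\tfrac b\nu f'(\hat v)+h^2-\tfrac c\nu h$ (implicit in Proposition~\ref{prop0}(i)), whence the equivalent forms
\[
h'+h^2-\tfrac c\nu h \;=\; 2h'-\tfrac b\nu f'(\hat v) \;=\; \tfrac b\nu\,\hat v_x\,\frac{d}{dx}\Big(\frac{f(\hat v)}{\hat v_x^{\,2}}\Big) \;=\; \tfrac b\nu f'(\hat v)+2\big(h-\tfrac c{2\nu}\big)^2-\tfrac{c^2}{2\nu^2}.
\]
Since $\hat v_x>0$, the third form shows that, apart from the quantitative constant, \eqref{FunctIneq} is equivalent to the strict monotonicity of $x\mapsto f(\hat v(x))/\hat v_x(x)^2$, a companion of the monotonicity of $x\mapsto f(\hat v(x))/\hat v_x(x)$ provided by Proposition~\ref{prop0}(i).

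Next I would record the behaviour at infinity. By Proposition~\ref{prop0}, $h$ is strictly increasing with $h(-\infty)=\gamma_-$, $h(+\infty)=\gamma_+$, with $\gamma_-<0\le\tfrac c\nu<\gamma_+$ because $f'(0),f'(1)<0$, and with $\gamma_-^2-\tfrac c\nu\gamma_-=-\tfrac b\nu f'(0)$, $\gamma_+^2-\tfrac c\nu\gamma_+=-\tfrac b\nu f'(1)$ (Proposition~\ref{prop0}(ii)). Plugging $\hat v\to0$, $h\to\gamma_-$ (resp. $\hat v\to1$, $h\to\gamma_+$) into $h'=\tfrac b\nu f'(\hat v)+h^2-\tfrac c\nu h$ gives $h'\to0$ at each end, so the left-hand side of \eqref{FunctIneq}, written as $2h'-\tfrac b\nu f'(\hat v)$, tends to $-\tfrac b\nu f'(0)>0$ as $x\to-\infty$ and to $-\tfrac b\nu f'(1)>0$ as $x\to+\infty$. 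Being moreover continuous on $\R$, it extends to a continuous function on $[-\infty,+\infty]$ that is positive at $\pm\infty$; hence it is enough to prove that it is strictly positive at every finite point, and $\kappa$ is then the (attained, positive) minimum of this extension.

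For the pointwise positivity I would split according to the sign of $h(h-\tfrac c\nu)$. As $h$ is strictly increasing and $\gamma_-<0\le\tfrac c\nu<\gamma_+$, the set $S:=\{x:0<h(x)<\tfrac c\nu\}$ is a bounded open interval $(x_-,x_+)$ (empty when $c=0$) with $\hat v(x_-)=a$ and $\hat v_{xx}(x_+)=0$, and off $S$ one has $h(h-\tfrac c\nu)\ge0$, so the left-hand side of \eqref{FunctIneq} is $\ge h'>0$ there by Proposition~\ref{prop0}(i). The main obstacle is the interval $S$, where $h(h-\tfrac c\nu)<0$ and the bound ``$\ge h'$'' no longer suffices; equivalently one must show $f(\hat v)/\hat v_x^2$ is strictly increasing on $\overline S$. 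Two reductions help: on $S$ the identity $h'=\tfrac b\nu f'(\hat v)+h^2-\tfrac c\nu h$ together with $h'>0$ forces $\tfrac b\nu f'(\hat v)>\tfrac c\nu h-h^2>0$, so $\hat v$ ranges only over values where $f'>0$; and at the endpoints the left-hand side of \eqref{FunctIneq} equals $h'(x_-)=\tfrac b\nu f'(a)>0$, resp. $h'(x_+)=\tfrac b\nu f'(\hat v(x_+))>0$. To handle the interior of $S$ I would try the variation-of-constants mechanism behind Proposition~\ref{prop0}(i): $\hat v_x$ lies in the kernel of the linearization $\mathcal L u:=\nu u''-cu'+bf'(\hat v)u$, while $f(\hat v)$ solves $\mathcal L[f(\hat v)]=\nu f''(\hat v)\hat v_x^2$, so the Wronskian-type quantity whose sign is that of $\tfrac{d}{dx}\big(f(\hat v)/\hat v_x^2\big)$ satisfies a first-order linear ODE; assumption (A3) --- through the single sign change of $f''$ at $\hat v^{-1}(v_\ast)$ --- and the vanishing of the boundary terms (from the exponential decay of $\hat v_x$ at $\pm\infty$ in Lemma~\ref{lem1_1} and Proposition~\ref{prop0}(iii)) should control it, the delicate point being that a negative contribution proportional to $f(\hat v)\hat v_{xx}^2$ in the inhomogeneity has to be dominated by the $f''$-term. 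A more computational alternative is to study directly the sign of the $\hat v$-derivative of $\tfrac b\nu f'(\hat v)+2(h-\tfrac c{2\nu})^2-\tfrac{c^2}{2\nu^2}$ on $\overline S$, using (A3) and the concavity of $\hat v_x$ regarded as a function of $\hat v$ (a restatement of Proposition~\ref{prop0}(i)), so as to reduce positivity on $\overline S$ to the two endpoint values just computed. Combining the three regions and taking infima then yields $\kappa>0$; I expect the interval $S$ to be the only genuine difficulty.
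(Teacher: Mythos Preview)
Your setup is clean and matches the paper: writing $h=\tfrac b\nu f(\hat v)/\hat v_x$, the left-hand side equals $h'+h^2-\tfrac c\nu h=2h'-\tfrac b\nu f'(\hat v)$; the limits at $\pm\infty$ are $-\tfrac b\nu f'(0)>0$ and $-\tfrac b\nu f'(1)>0$; off the interval $S=(x_0,x_1)$ one has $h(h-\tfrac c\nu)\ge 0$, so the bound $\ge h'>0$ from Proposition~\ref{prop0}(i) suffices. You have also correctly identified that everything reduces to showing $g_2:=\tfrac12 f'(\hat v)\hat v_x^2-f(\hat v)\hat v_{xx}>0$ on $\overline S$, exactly the reduction the paper makes.

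The gap is in the interior of $S$. Your ``Wronskian-type'' sketch, as written, proves only Proposition~\ref{prop0}(i) again: the standard Wronskian $W=f(\hat v)\hat v_{xx}-f'(\hat v)\hat v_x^2$ satisfies $(e^{-\frac c\nu x}W)'=-e^{-\frac c\nu x}f''(\hat v)\hat v_x^3$, and (A3) together with the vanishing at $\pm\infty$ indeed gives $W<0$, i.e.\ $f(\hat v)\hat v_{xx}<f'(\hat v)\hat v_x^2$. But you need the \emph{stronger} inequality $f(\hat v)\hat v_{xx}<\tfrac12 f'(\hat v)\hat v_x^2$. If instead you write the first-order ODE for $Q:=2g_2$, the inhomogeneity is $e^{-\frac c\nu x}\big[f''(\hat v)\hat v_x^3+f'(\hat v)\hat v_x(\tfrac c\nu\hat v_x-2\hat v_{xx})\big]$, which does \emph{not} have a sign on $S$ (there $\hat v_{xx}>0$, $f'(\hat v)>0$, and $f''(\hat v)$ changes sign at $x_\ast$), so the integration argument does not close. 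Your second alternative --- monotonicity in $\hat v$ of $\tfrac b\nu f'(\hat v)+2(h-\tfrac c{2\nu})^2-\tfrac{c^2}{2\nu^2}$ on $\overline S$ --- is likewise not established; note that $f'(\hat v)$ is unimodal on $S$ while $(h-\tfrac c{2\nu})^2$ is U-shaped there, so the sum need not be monotone, and the endpoint values alone do not pin it down.

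For comparison, the paper proves $g_2>0$ on $[x_0,x_1]$ by a genuinely different and more elaborate mechanism (Lemmas~\ref{th0:lem1}--\ref{th0:lem3}): it subdivides $[x_0,x_1]$ at $x_{0.5}$ and $x_\ast$, and on each outer subinterval derives for every integer $m$ an identity for $f(\hat v)\hat v_{xx}\hat v_x^{\pm m}$ by integrating from the relevant endpoint; the extra factor $\hat v_x^{\pm m}$ creates a term $m\!\int f(\hat v)\hat v_{xx}^2\hat v_x^{\pm m-1}$ that, via the hypothesis $g_2\ge 0$ up to a first zero $\bar x$, can be bounded and then made to dominate the remaining positive term by taking $m$ large. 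This forces $g_2(\bar x)>0$, a contradiction. The middle subinterval is then handled by a monotonicity argument using the sign of $\tfrac c\nu-2h$ and of $f''(\hat v)$. The free parameter $m$ is the key device missing from your sketch; without it (or some substitute), neither of your proposed routes on $S$ goes through.
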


\medskip 
\noindent 
The proof of Theorem \ref{th0} is given in Section \ref{Section0} below.
We will assume from now on for all subsequent results that {\bf (A1)} - {\bf (A3)} hold. 

\begin{example} 
\label{ExampleNagumo}
In the particular case of the Nagumo equation, i.e., $f(v) = v(1-v)(v-a)$ for $a\in (0,1)$, the travelling wave is explicitely given as  
$\hat{v}(x) = (1+ e^{-kx})^{-1}$ (resp. its spatial translates) with $k = \sqrt{\frac{b}{2\nu}}$. The corresponding wave speed $c$ can be 
calculated as $c = \sqrt{2\nu b} \left( \frac 12 - a\right)$. The logarithmic derivative $\rho := \frac d{dx} \log\hat{v}_x  
= \frac{\hat{v}_{xx}}{\hat{v}_x}$ is given as $\rho = \frac c\nu - \frac b\nu \frac{f}{\hat{v}_x}  
= \sqrt{\frac{2b}{\nu}} \left( \frac 12 - \hat{v}\right)$. Thus  
$$ 
- \rho '  + \rho^2 - \frac c\nu \rho  = \frac b\nu \left( (\hat{v}-a)^2 + a(1-a) \right) \ge \frac b\nu a(1-a) > 0 \, . 
$$  
\end{example}

\bigskip 
\noindent 
With the functional inequality \eqref{FunctIneq} of Theorem \ref{th0} we can now state the mentioned result on the Lyapunov stability of the linearization of \eqref{RDE} 
along the travelling wave $\hat{v}$ in the deterministic case. To state our result precisely, let us introduce the Hilbert space $H = L^2 (\R)$ and the Sobolev 
space $V = H^{1,2} (\R)$, defined as the closure of $C_c^1 (\R)$ w.r.t. the norm 
$$ 
\|u\|_V^2 = \int_{\R} u^2  + u^2_x  \, dx 
$$ 
in $H$. Identifying $H$ with its dual $H^\prime$ we obtain dense and continuous embeddings $V\hookrightarrow H \equiv H^\prime 
\hookrightarrow H^\prime$. Note that w.r.t. this embedding the dualization between $V^\prime$ and $V$ reduces for $f\in H$ to 
the inner product in $H$, i.e., $_{V^\prime}\langle f, g\rangle_V = \langle f,g\rangle_H = \int fg \, dx$. 
The elementary estimate $u^2 (y) = 2\int_{-\infty}^y u_x (x) u(x) \, dx \le \int u_x^2 + u^2\, dx 
\le\|u\|_V^2$ for $u\in C_c^1 (\R )$ can be extended to the estimate $\|u\|_\infty \le \|u\|_V$ for all $u\in V$ that turns out 
to be crucial in the following.

\smallskip 
\noindent 
The unbounded linear operator $\nu u_{xx}$ induces a continuous mapping $ A: V\rightarrow V'$, because for $u\in C_c^1 (\R)$   
$$ 
\begin{aligned}
_{V^\prime}\langle A u, v \rangle_{V^\prime} 
& = \int \nu u_{xx} \, v\,  dx = - \nu\int u_x v_x \, dx 
 \le \nu \| u \|_V \|v\|_V\, . 
\end{aligned}
$$

\begin{theorem} 
\label{th1} 
Let $u\in V$. Then 
$$ 
_{V^\prime}\langle Au + bf^\prime (\hat{v}) u, u\rangle_V \le - \kappa_\ast \|u\|_V^2 + C_\ast \langle u, \hat{v}_x \rangle^2  
$$ 
where 
$$ 
\kappa_\ast := \frac{\kappa}{\kappa + \left( \frac c{2\nu} \right)^2} \frac \nu{q_1} 
$$ 
and 
$$ 
C_\ast = \left( \kappa_\ast q_2 + \frac{\nu}{\kappa} \left(\frac c{2\nu}\right)^2 \left( \kappa + \left(\frac c{2\nu}\right)^2\right)  
\frac{\int e^{-\frac c\nu x }\hat{v}_x^2\, dx}{\left(\int e^{-\frac c{2\nu}x }\hat{v}_x^2\, dx \right)^2}  \right)\, . 
$$ 
Here, $\kappa$ is the lower bound obtained in Theorem \ref{th0} and $q_1$ and $q_2$ are defined in Lemma \ref{lem2_1} below. 
\end{theorem}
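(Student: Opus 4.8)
The strategy is to identify the bilinear form on the left-hand side, rewrite it through the ground-state transform attached to the positive solution $\psi_0:=\hat v_x\,e^{-\frac{c}{2\nu}x}$, and then feed in the functional inequality \eqref{FunctIneq} as a weighted Poincaré-type estimate.

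\textbf{Step 1: the bilinear form and the ground-state identity.} For $u\in C_c^1(\R)$ one has $_{V'}\langle Au,u\rangle_V=\int_\R\nu u_{xx}u\,dx=-\nu\|u_x\|_H^2$, so by density (and since $f'(\hat v)$ is bounded)
$$_{V'}\langle Au+bf'(\hat v)u,u\rangle_V=-\nu\|u_x\|_H^2+b\int_\R f'(\hat v)u^2\,dx\qquad(u\in V);$$
in particular the transport term $-cu_x$ of the linearization of \eqref{RDE} along $\hat v$ contributes nothing here, because $\langle u_x,u\rangle_H=0$. Differentiating the travelling-wave equation $c\hat v_x=\nu\hat v_{xx}+bf(\hat v)$ shows that $\psi_0$ is a strictly positive solution of $\nu\psi_0''+bf'(\hat v)\psi_0=\nu(\tfrac{c}{2\nu})^2\psi_0$, i.e.\ the ground state of $A+bf'(\hat v)$ with eigenvalue $\lambda_0:=\nu(\tfrac{c}{2\nu})^2$, and $\psi_0\in V$ by Proposition~\ref{prop0}(iii). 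Substituting $u=\psi_0\,g$ with $g:=u/\psi_0$ and integrating by parts (boundary terms vanish by Lemma~\ref{lem1_1} and Proposition~\ref{prop0}) yields the identity
$$_{V'}\langle Au+bf'(\hat v)u,u\rangle_V=\lambda_0\,\|u\|_H^2-\nu\int_\R\psi_0^2\Big(\tfrac{u}{\psi_0}\Big)_x^2\,dx .$$
Everything thus reduces to a sufficiently strong lower bound for $\int_\R\psi_0^2(u/\psi_0)_x^2\,dx$.

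\textbf{Step 2: using \eqref{FunctIneq} (the crux).} Since $\frac{d}{dx}\log\psi_0=\frac{d}{dx}\log\hat v_x-\frac{c}{2\nu}$, the functional inequality \eqref{FunctIneq} is equivalent to
$$-\frac{d^2}{dx^2}\log\psi_0+\Big(\frac{d}{dx}\log\psi_0\Big)^2\ \ge\ \kappa+\Big(\tfrac{c}{2\nu}\Big)^2,\qquad\text{i.e.}\qquad \big(\psi_0^{-1}\big)''\ \ge\ \Big(\kappa+\big(\tfrac{c}{2\nu}\big)^2\Big)\,\psi_0^{-1},$$
a quantitative convexity statement for $\psi_0^{-1}$ (equivalently, $\psi_0$ decays at least at the rate $\sqrt{\kappa+(\tfrac{c}{2\nu})^2}$ on both sides). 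I would exploit this — by multiplying by a suitable test weight built from $u$, integrating by parts, completing squares to isolate $\int\psi_0^2(u/\psi_0)_x^2$, and controlling the resulting lower-order term (which lives on the one-dimensional subspace $\mathrm{span}\,\psi_0=\mathrm{span}\,\hat v_x$ on which $\int\psi_0^2(u/\psi_0)_x^2$ degenerates) by Cauchy--Schwarz — to obtain a weighted Poincaré-type inequality
$$\nu\int_\R\psi_0^2\Big(\tfrac{u}{\psi_0}\Big)_x^2\,dx\ \ge\ \big(\lambda_0+\nu\kappa\big)\|u\|_H^2\ -\ \frac{\nu}{\kappa}\Big(\tfrac{c}{2\nu}\Big)^2\Big(\kappa+\big(\tfrac{c}{2\nu}\big)^2\Big)\,\frac{\int_\R e^{-\frac{c}{\nu}x}\hat v_x^2\,dx}{\big(\int_\R e^{-\frac{c}{2\nu}x}\hat v_x^2\,dx\big)^2}\,\langle u,\hat v_x\rangle^2 .$$
The integrals appearing here are exactly $\int e^{-\frac{c}{\nu}x}\hat v_x^2=\|\psi_0\|_H^2$ and $\int e^{-\frac{c}{2\nu}x}\hat v_x^2=\langle\psi_0,\hat v_x\rangle$, both finite by Proposition~\ref{prop0}(iii). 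Together with the identity of Step~1 this already gives the theorem with $\|\cdot\|_H$ in place of $\|\cdot\|_V$ and rate $\nu\kappa$, and the surviving $\langle u,\hat v_x\rangle^2$-coefficient is precisely the second summand of $C_\ast$.

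\textbf{Step 3: upgrading to the $V$-norm.} To pass to $\|\cdot\|_V$ I would use Lemma~\ref{lem2_1}, which controls $\|u\|_V^2$ in terms of $\int_\R\psi_0^2(u/\psi_0)_x^2\,dx$ and $\|u\|_H^2$ with constants $q_1,q_2$. Splitting the dissipation $\nu\int\psi_0^2(u/\psi_0)_x^2$ into two pieces in the ratio $\kappa:(\tfrac{c}{2\nu})^2$, the $\kappa$-piece together with Lemma~\ref{lem2_1} produces the term $-\kappa_\ast\|u\|_V^2$ plus a $\kappa_\ast q_2\langle u,\hat v_x\rangle^2$ contribution — this accounts for $\kappa_\ast=\frac{\kappa}{\kappa+(\frac{c}{2\nu})^2}\cdot\frac{\nu}{q_1}$ and the first summand of $C_\ast$ — while the $(\tfrac{c}{2\nu})^2$-piece, combined with the weighted Poincaré inequality of Step~2, absorbs the surviving $\lambda_0\|u\|_H^2=\nu(\tfrac{c}{2\nu})^2\|u\|_H^2$ at the cost of the second summand of $C_\ast$. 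Adding the two estimates gives the claim. The main obstacle is Step~2: converting the pointwise second-order differential inequality \eqref{FunctIneq} into the integrated, weighted Poincaré inequality above, with the correct rank-one correction direction $\hat v_x$ and the stated constants. Because $\hat v_x$ is only \emph{non-uniformly} log-concave (Proposition~\ref{prop0}(i)), one cannot simply invoke a Bakry--\'Emery- or uniform-convexity-type criterion; instead one has to use the precise exponential behaviour of $\hat v_x$ at $\pm\infty$ encoded in $\gamma_\pm$ (Proposition~\ref{prop0}(ii)) together with the weighted integrability of Proposition~\ref{prop0}(iii) to make the integrations by parts and the Cauchy--Schwarz step rigorous.
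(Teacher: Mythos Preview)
Your overall architecture --- ground-state transform to the weight $\psi_0 = e^{-\frac{c}{2\nu}x}\hat v_x$, a weighted Poincar\'e inequality driven by \eqref{FunctIneq}, then Lemma~\ref{lem2_1} to pass from $\int\psi_0^2(u/\psi_0)_x^2\,dx$ to $\|u\|_V^2$ --- is exactly the paper's strategy. Your Step~1 reproduces identity \eqref{eq2_1}, and Step~3 is the final assembly in Section~\ref{Proofth1}.

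The genuine gap is in Step~2, precisely at the point you flag as the main obstacle. You write that the lower-order term ``lives on the one-dimensional subspace $\mathrm{span}\,\psi_0=\mathrm{span}\,\hat v_x$'', but for $c\neq 0$ these are \emph{different} one-dimensional subspaces. The Dirichlet form $u\mapsto\int\psi_0^2(u/\psi_0)_x^2\,dx$ degenerates on $\mathrm{span}\,\psi_0$, so the Poincar\'e inequality one obtains directly from \eqref{FunctIneq} (this is Proposition~\ref{PropHardy}, a weighted Hardy inequality for functions vanishing at the unique zero of $\psi_0'/\psi_0$) naturally yields a rank-one correction $\big(\int (u/\psi_0)\,\psi_0^2\,dx\big)^2=\langle u,\psi_0\rangle^2$, \emph{not} $\langle u,\hat v_x\rangle^2$. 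A plain completion of squares or Cauchy--Schwarz will not rotate this defect into the $\hat v_x$-direction with the constant you claim; the paper explicitly notes that $\langle u,\psi_0\rangle$ is not the quantity one can control dynamically.

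The paper closes this gap with a construction you do not anticipate (Lemma~\ref{lem1a}): using the Feynman--Kac/resolvent representation for the diffusion with invariant density $\psi_0^2\,dx$, one builds a nonnegative $g$ solving
\[
\Big(\kappa+\big(\tfrac{c}{2\nu}\big)^2\Big)g-\Big(g_{xx}+2\tfrac{\psi_0'}{\psi_0}g_x\Big)=\Big(\kappa+\big(\tfrac{c}{2\nu}\big)^2\Big)e^{\frac{c}{2\nu}x},
\]
together with the pointwise bound $|g_x|\le \tfrac{c}{2\nu}\,g$ (a consequence of the strict monotonicity of $f(\hat v)/\hat v_x$ from Proposition~\ref{prop0}(i), via the derivative flow of the SDE). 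One then subtracts a multiple of $g$ --- not of the constant function, and not of $e^{\frac{c}{2\nu}x}$ --- from $u/\psi_0$ before invoking the Hardy inequality; the elliptic equation for $g$ is precisely what makes the cross term collapse, after one integration by parts, to $\int (u/\psi_0)\,e^{\frac{c}{2\nu}x}\psi_0^2\,dx=\langle u,\hat v_x\rangle$, and the gradient bound on $g$ together with the lower bound on $\int g^2\psi_0^2\,dx$ produce the constant $C_{\ref{prop2_1a}}$. Without this auxiliary $g$, your Step~2 inequality (with correction direction $\hat v_x$ and the stated constant) is unproved.
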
 

\noindent 
The proof of Theorem \ref{th1} is given in Section \ref{Proofth1} below.

\medskip 
\noindent 
The previous Theorem states that the flow generated by the semilinear diffusion equation is contracting in the direction that is orthogonal to 
$\hat{v}_x$ (and its spatial translates).  To properly quantify this contraction  we will need to model the equation \eqref{RDE} 
as an evolution equation in the appropriate function space.

\subsection{Realization of \eqref{RDE} as evolution equation}

\medskip 
\noindent 
In the next step we want to realize the reaction diffusion equation \eqref{RDE} as an evolution equation on a suitable function space. 
To this end we need to impose yet additional assumptions on the reaction term, but now concerning only its global behaviour at infinity 
and not affecting its behaviour on $[0,1]$ hence also not the travelling wave $\hat{v}$. We assume that the derivative $f^\prime$ of 
the reaction term is bounded from above 
\begin{equation} 
\tag{\bf B1} 
\eta_1 := \sup_{x\in\R} f^\prime (x) < \infty   \, , 
\end{equation} 
that there exists a finite positive constant $L$ such that 
\begin{equation} 
\tag{\bf B2} 
\left| f(x_1 ) - f(x_2)\right| \le L |x_1 - x_2| \left( 1 + x_1^2 + x_2^2 \right) \qquad \forall x_1 , x_2 \in \R\, , 
\end{equation} 
which is typically satisfied for polynomials of third degree with leading negative coefficient and that there exists $\eta_2$ such that  
\begin{equation}
\tag{\bf B3} 
\left| f(u+v) - f(v) - f^\prime (v) u \right|  \le \eta_2 (1 + |u|) |u|^2 \qquad \forall v\in [0,1]\, , u\in \R\, . 
\end{equation}

\smallskip 
\noindent 
Since we are interested in the asymptotic stability of the travelling wave also w.r.t. stochastic perturbations, it is now natural to consider the following 
decomposition $v(t,x) = u(t,x) + \hat{v} (x)$ of the solution $v$ of \eqref{RDE}, where $u$ now satisfies the following equation 
\begin{equation} 
\label{WREL} 
u_t (t,x) = \nu u_{xx} (t,x) + b\left( f(u(t,x) + \hat{v}(x)) - f(\hat{v}(x)\right) 
\end{equation} 
on $\R_+\times \R$ that can be analysed best in a variational framework.

\section{The deterministic case} 

\smallskip 
\noindent 
The nonlinear term 
\begin{equation} 
\label{Nonlinear}
G (t,u) : = f (u + \hat{v}(t)) - f (\hat{v}(t))
\end{equation} 
can be realized as a continuous mapping 
$$ 
G : [0, \infty )\times V \rightarrow V^\prime 
$$ 
being Lipschitz w.r.t. second variable $u$ on bounded subsets of $V$. Indeed, condition {\bf (B2)} on $f$ implies that  
$$ 
\begin{aligned} 
_{V^\prime}\langle G (t,u), w \rangle_{V^\prime} 
& = \int_{\R} G (t,u) w\, dx = \int_{\R} \left(f(u + \hat{v}(t)) - f(\hat{v}(t))\right) w\, dx \\ 
& \le  L \int_{\R} |u| (2 + u^2)|w|\, dx \le L \|u\|_H \left( 3 + 2\|u\|^2_V\right) \|w\|_H 
\end{aligned} 
$$ 
hence 
\begin{equation} 
\label{Est1}
\begin{aligned}  
\| G (t,u) \|_{V^\prime} \leq L \|u\|_H \left( 3 + 2\|u\|^2_V \right) 
\end{aligned} 
\end{equation} 
and similarly 
$$
\begin{aligned}
_{V^\prime} \langle G (t,u_1) - G(t,u_2), w \rangle_{V^\prime}  
& = \int_{\R} (f (u_1 + \hat{v} (t)) - f (u_2 + \hat{v}(t) ) w \, dx \\
& \le L \|u_1 - u_2\|_H \left( 4 + 2\| u_1\|^2_V + 2\| u_2 \|^2_V \right) \|w\|_H 
\end{aligned}
$$ 
which implies 
\begin{equation} 
\label{Est2}  
\| G (t,u_1) - G (t,u_2) \|_{V^\prime} \le 2L \left( 2 + \|u_1\|_V^2 +  \|u_2\|^2_V\right) \| u_1 - u_2\|_H  \, . 
\end{equation} 
The sum $Au + bG (t,u)$ of both operators now satisfies the global monotonicity condition  
\begin{equation} 
\label{Est3} 
\begin{aligned} 
\langle A u_1 & + b G (t,u_1) - A u_2 - b G (t,u_2), u_1 - u_2 \rangle \\
& = \int A(u_1 - u_2)(u_1 - u_2)\, dx + b\int (G(t,u_1) - G(t,u_2))(u_1 - u_2)\, dx \\  
& = - \nu \int (u_1 - u_2)_x^2\, dx 
   + b\int (G(t,u_1) - G(t,u_2))(u_1 - u_2)\, dx \\ 
& \le b\eta_1 \| u_1 - u_2 \|^2_H  
\end{aligned} 
\end{equation} 
using {\bf (B1)} and similarly the coercivity condition 
\begin{equation} 
\label{Est4} 
\langle Au + bG(t,u), u \rangle \le - \nu \|u\|^2_V + 
(\nu +  b \eta_1 ) \|u\|^2_H 
\end{equation} 
since $f(s)s = (f(s) - f(0))(s-0) \le\eta_1 s^2$ for all $s\in\R$  using {\bf (B1)}.

\smallskip
\noindent  
Theorem 1.1 in \cite{LR} now implies for all initial conditions $u_0\in H$ and all finite times $T$ existence and uniqueness of a variational solution 
$u\in L^\infty ([0,T];H)\cap L^2 ([0,T]; V)$ satisfying the integral equation  
\begin{equation} 
\label{VSDet} 
u (t) = u_0 + \int^t_0 \left( A u (s) + b (f (u(s) + \hat{v}(s)) - f (\hat{v}(s)))\right)\, ds 
\end{equation} 
and we may extend the solution to the whole time axes $\R_+$. 

\smallskip 
\noindent 
The integral on the right hand side of \eqref{VSDet} is well-defined as a Bochner integral in $L^2 ([0,T]; V^\prime )$ using \eqref{Est1} which implies in 
particular that the mapping $t \mapsto u (t)$, $\R_+ \rightarrow V^\prime$, is differentiable with differential 
\begin{equation} 
\label{RDDet} 
\frac{du}{dt} = Au (t) + b \left( f (u (t) + \hat{v}(t)) - f ( \hat{v}(t))\right) \quad \in V^\prime \, ,  
\end{equation} 
hence continuous. 

\smallskip 
\noindent 
We are now ready to state precisely our notion of stability we are going to prove in the following. 

\begin{definition} 
\label{Defi1}
The travelling wave solution $\hat{v}$ is called locally asymptotically stable w.r.t. the $H$-norm if there exists $\delta > 0$ such that for initial condition 
$v_0$ with $v_0 - \hat{v} \in H$ and $\|v_0 - \hat{v}\|_H\le \delta$ the unique variational solution $u(t,x) = v(t,x) - \hat{v}(x)$ of 
\eqref{WREL} satisfies
$$ 
\lim_{t\to\infty} \| v_0 - \hat{v}(\cdot + x_0)\|_H = 0 
$$ 
for some (phase) $x_0\in\R$.  
\end{definition} 

\medskip 
\noindent 
In order to apply Theorem \ref{th1} we need to control the tangential component $\langle v(t) - \hat{v}(\cdot + x_0),\hat{v}_x (\cdot + x_0)\rangle^2$ 
of the given solution $v(t) = u(t) + \hat{v}(t)$ w.r.t. the appropriate phase-shift $x_0$, i.e., the phase-shift $x_0$ that minimizes the  $L^2$-distance between the solution $v(t)$ and 
the orbit consisting of all phase-shifted travelling waves $\hat{v} (\cdot + x_0 )$. This can be achieved asymptotically by introducing 
dynamically by by introducing the following ordinary differential equation 
\begin{equation} 
\label{ODE} 
\begin{aligned} 
\dot{C}(t) 
& = c + m\langle v(t) - \hat{v} \left( \cdot + C(t)\right), \hat{v}_x \left( \cdot + C(t)\right) \rangle  \, , \\ 
C(0) & = 0 
\end{aligned}
\end{equation} 
for $m \ge 0$. To simplify notations, let 
$$ 
\tilde{v} (t) := \hat{v} (\cdot + C(t)) 
$$ 
so that we can rewrite equation \eqref{ODE} as 
\begin{equation} 
\label{ODE1} 
\begin{aligned} 
\dot{C}(t) 
& = c + m \langle v(t) - \tilde{v}(t) , \tilde{v}_x (t)\rangle\, , \\ 
C(0) & = 0\, .   
\end{aligned}
\end{equation} 
The next Proposition first shows that \eqref{ODE} is well-posed. 

\begin{proposition}
\label{prop3_4} 
Let $v = u + \hat{v} (t)$ be a solution of \eqref{RDDet} with $u\in L^\infty ([0, T ], H) \cap L^2 ([0, T ]; V )$. Then
$$ 
B(t, C) = \langle v (t)- \hat{v}(\cdot + C), \hat{v}_x (\cdot + C)\rangle_H 
$$ 
is continuous in $(t, C) \in [0,T]\times\R$ and Lipschitz continuous w.r.t. $C$ with Lipschitz constant independent of $t$.
\end{proposition}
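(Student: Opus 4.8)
Since by definition $v(t) = u(t) + \hat{v}(\cdot + ct)$, the plan is to split
$$
B(t,C) = \langle u(t), \hat{v}_x(\cdot+C)\rangle_H + \langle \hat{v}(\cdot+ct) - \hat{v}(\cdot + C), \hat{v}_x(\cdot+C)\rangle_H =: B_1(t,C) + B_2(t,C)
$$
and to treat the term $B_1$, which is linear in the solution $u$, and the purely ``geometric'' term $B_2$ separately. As a preliminary step I would record that $\hat{v}_x,\hat{v}_{xx}\in H$: this follows from Proposition \ref{prop0}(iii) upon choosing $\alpha=0$ in the two displayed estimates there, which is admissible because $\gamma_-<0\le c/\nu<\gamma_+$ by Proposition \ref{prop0}(ii) together with $f'(0),f'(1)<0$ from {\bf (A1)}. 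Hence $\hat{v}_x(\cdot+C)\in V$ for every $C$, and $\hat{v}(\cdot+ct)-\hat{v}(\cdot+C)=\int_C^{ct}\hat{v}_x(\cdot+s)\,ds$ lies in $H$ with $\|\hat{v}(\cdot+ct)-\hat{v}(\cdot+C)\|_H\le |ct-C|\,\|\hat{v}_x\|_H$; in particular $v(t)-\hat{v}(\cdot+C)\in H$ and $B$ is well defined.

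For $B_2$ the substitution $y=x+C$ gives $B_2(t,C)=\beta(ct-C)$ with the fixed function $\beta(s):=\int_\R\bigl(\hat{v}(y+s)-\hat{v}(y)\bigr)\hat{v}_x(y)\,dy$. Writing $\hat{v}(y+s)-\hat{v}(y)=\int_0^s\hat{v}_x(y+\sigma)\,d\sigma$ and interchanging the order of integration (justified since $\int_0^{|s|}\|\hat{v}_x\|_H^2\,d\sigma<\infty$) one obtains $|\beta(s_1)-\beta(s_2)|\le\|\hat{v}_x\|_H^2\,|s_1-s_2|$; thus $\beta$, and hence $B_2$, is globally Lipschitz, so $B_2$ is jointly continuous and Lipschitz in $C$ with a constant, $\|\hat{v}_x\|_H^2$, that does not depend on $t$. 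For $B_1$, Cauchy--Schwarz together with $\hat{v}_x(\cdot+C_1)-\hat{v}_x(\cdot+C_2)=\int_{C_2}^{C_1}\hat{v}_{xx}(\cdot+s)\,ds$ yields
$$
|B_1(t,C_1)-B_1(t,C_2)|\le\|u(t)\|_H\,\|\hat{v}_{xx}\|_H\,|C_1-C_2|\le\|u\|_{L^\infty([0,T];H)}\|\hat{v}_{xx}\|_H\,|C_1-C_2|,
$$
again with a constant independent of $t$. For the continuity of $t\mapsto B_1(t,C)=\langle u(t),\hat{v}_x(\cdot+C)\rangle_H$ I would use that the variational solution $u$ of \eqref{RDDet} is continuous as a map into $V'$ (as noted after \eqref{RDDet}) and bounded in $H$ uniformly on $[0,T]$: if $t_n\to t$ then $u(t_n)\to u(t)$ in $V'$, so any weak limit point in $H$ of the bounded sequence $(u(t_n))$ must coincide with $u(t)$, whence $u(t_n)\rightharpoonup u(t)$ in $H$ and $B_1(t_n,C)\to B_1(t,C)$ since $\hat{v}_x(\cdot+C)\in H$; equivalently, one may simply invoke the $H$-continuous version of the variational solution from \cite{LR}. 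Combined with the uniform Lipschitz bound in $C$, this gives joint continuity of $B_1$ on $[0,T]\times\R$.

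Adding the two pieces, $B=B_1+B_2$ is jointly continuous on $[0,T]\times\R$ and Lipschitz in $C$ with constant $\|u\|_{L^\infty([0,T];H)}\|\hat{v}_{xx}\|_H+\|\hat{v}_x\|_H^2$, which is independent of $t$, as claimed. I expect the only genuinely delicate point to be the continuity in $t$: since $u$ is controlled only in $L^\infty([0,T];H)\cap L^2([0,T];V)$ with $du/dt$ continuous into $V'$, strong $H$-continuity of $u$ is not available a priori, and one must work with weak $H$-continuity tested against the fixed vector $\hat{v}_x(\cdot+C)$ --- which is exactly what the statement requires.
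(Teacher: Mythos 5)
Your proof is correct and follows essentially the same route as the paper: split off the part linear in $u(t)$ from the purely geometric part, and obtain the uniform Lipschitz bound in $C$ from $\hat{v}_x(\cdot+C_1)-\hat{v}_x(\cdot+C_2)=\int_{C_2}^{C_1}\hat{v}_{xx}(\cdot+s)\,ds$ together with Cauchy--Schwarz and $\hat{v}_x,\hat{v}_{xx}\in H$. If anything, you are more complete than the paper, which only establishes the Lipschitz estimate in $C$ and leaves the continuity in $t$ implicit; your weak-$H$-continuity argument via the $V'$-continuity of $u$ fills that in correctly.
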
 

\begin{proof}
First note that 
$$
\begin{aligned}
B(t, C_1) - B (t, C_2) 
& = \langle \hat{v}_x ( \cdot + C_1) - \hat{v}_x ( \cdot + C_2), u (t)\rangle_H \\
& \qquad - \langle \hat{v}_x (\cdot + C_1), \hat{v} ( \cdot + C_1) - \hat{v} ( \cdot )\rangle_H \\                    
& \qquad + \langle \hat{v}_x (\cdot + C_2), \hat{v} ( \cdot + C_2) - \hat{v} ( \cdot )\rangle_H 
\end{aligned}
$$ 
Using 
$$ 
\begin{aligned}  
\hat{v}_x (x + C_1) - \hat{v}_x (x + C_2) 
& = \int_{C_1}^{C_2} \hat{v}_{xx} (x + y)\, dy \\ 
& \le \int_{C_1}^{C_2} |\hat{v}_{xx}| (x + y) \, dy 
\end{aligned} 
$$ 
we conclude that the first term on the right hand side can be estimated from above by
$$ 
\begin{aligned}
\|\hat{v}_x ( \cdot + C_1) & - \hat{v}_x ( \cdot + C_2)\|_H  \|u(t)\|_H \\
& \le \left( |C_1 - C_2| \int_{\R} \int_{C_1}^{C_2} \hat{v}_{xx}^2 (x + y) \, dy\, dx \right)^{\frac 12} \|u(t)\| _H  \\
& = |C_1 - C_2 | \|\hat{v}_{xx}\|_H^2 \|u(t)\|_H 
\end{aligned} 
$$ 
which implies that this term is Lipschitz continuous with Lipschitz constant independent of $t \in [0,T]$.

\medskip 
\noindent 
The second and the third term can be rewritten as follows: 
$$
\begin{aligned}
& \Big| \langle \hat{v}_x ( \cdot + C_1), \hat{v} (\cdot + C_1) - \hat{v} ( \cdot )\rangle_H \\
& \qquad\qquad  - \langle \hat{v}_x ( \cdot + C_2), \hat{v} ( \cdot + C_2) - \hat{v} (\cdot )\rangle_H \Big| \\
& \qquad = \Big| \langle \hat{v}_x , \hat{v}(\cdot - C_2) - \hat{v} ( \cdot - C_1 )\rangle_H \Big| \\
& \qquad \le  \|\hat{v}_x\|_H  \left( \int_{\R} \left( \int_{C_1}^{C_2} \hat{v}_x ( \cdot + y )\, dy \right)^2 \, dx \right)^{\frac 12} \\
& \qquad \le  \|\hat{v}_x\|_H  |C_1 - C_2| \|\hat{v}_x\|_H 
\end{aligned}
$$ 
so that also these two terms are Lipschitz continuous with Lipschitz constant independent of $t$.  
\end{proof}

\medskip 
\noindent 
In the following let
\begin{equation} 
\label{tildeU} 
\tilde{u} (t) := u(t) + \hat{v}(t) - \tilde{v}(t) = v(t) - \tilde{v} (t)\, . 
\end{equation}

\begin{proposition}
\label{prop1_3} 
Let $u = v - \hat{v}(t) \in L^\infty \left( [0,T]; H \right) \cap L^2 \left( [0,T]; V\right)$ be a solution of \eqref{RDDet} and  
$\tilde{u}$ be given by \eqref{tildeU}. Then $\tilde{u}\in L^\infty \left( [ 0,T]; H  \right) \cap  L^2 \left( [0,T ]; V\right)$ 
again and $\tilde{u}$ satisfies the evolution equation 
\begin{equation} 
\label{RDDet1}
\begin{aligned}
\frac{d\tilde{u}}{dt} (t)  
& = \nu \Delta \tilde{u}(t) + b \tilde{G} \left( t, \tilde{u}(t)\right) - (\dot{C} (t) -c) \tilde{v}_x (t)  \\
& = \nu \Delta \tilde{u} (t) + b f' \left( \tilde{v} (t) \right) \tilde{u} (t) + b \tilde{R} \left( t, \tilde{u} (t)\right)  - (\dot{C} (t) -c)\tilde{v}_x (t)
\end{aligned} 
\end{equation} 
with 
$$ 
\begin{aligned} 
\tilde{G}(t,u) & = f \left( u + \tilde{v} (t)\right) - f \left( \tilde{v}(t)\right) \, , \\
\tilde{R} (t,u) &  = \tilde{G} (t,u) - f' \left( \tilde{v} (t)\right) u \, . 
\end{aligned}
$$ 
\end{proposition}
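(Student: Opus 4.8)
The plan is to obtain \eqref{RDDet1} by differentiating the identity $\tilde u(t) = v(t) - \tilde v(t)$ directly, using that $v = u + \hat v(t)$ solves \eqref{RDE} (equivalently \eqref{RDDet} together with the travelling–wave equation for $\hat v(t)$) and that $\tilde v(t) = \hat v(\cdot + C(t))$ is a moving spatial translate of the travelling wave. All identities below are to be read in $V'$, with $\Delta$ interpreted as the operator $A\colon V\to V'$, and we use that {\bf (A1)}--{\bf (A3)} hold throughout.

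First I would dispose of the regularity assertion. Write $\tilde u(t) = u(t) + w(t)$ with $w(t) := \hat v(\cdot + ct) - \hat v(\cdot + C(t)) = \int_{C(t)}^{ct}\hat v_x(\cdot + y)\,dy$. Since $\hat v_x, \hat v_{xx}\in H$ (Lemma \ref{lem1_1} and Proposition \ref{prop0}), Minkowski's integral inequality together with translation invariance of the $V$-norm give $\|w(t)\|_V \le |ct - C(t)|\,\|\hat v_x\|_V$ and, more generally, $\|w(t_1) - w(t_2)\|_V$ is controlled by $\|\hat v_x\|_V$ times the Lebesgue measure of the symmetric difference of the intervals $[C(t_i),ct_i]$. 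As $C$ is continuous and bounded on $[0,T]$ (part of the well-posedness of \eqref{ODE} secured via Proposition \ref{prop3_4}), this yields $w\in C([0,T];V)$, and combined with $u\in L^\infty([0,T];H)\cap L^2([0,T];V)$ it gives $\tilde u$ in the same class.

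Next I would record the differentiability needed for \eqref{RDDet1} to make sense and then carry out the computation. Joint continuity of $B(t,C)$ (Proposition \ref{prop3_4}) and continuity of $t\mapsto C(t)$ make $t\mapsto c + mB(t,C(t))$ continuous, so $C\in C^1([0,T])$. Since $s\mapsto\hat v(\cdot+s)$ is $C^1$ from $\R$ into $V$ with derivative $s\mapsto\hat v_x(\cdot+s)$ (continuity of translations in $L^2$, using $\hat v_x,\hat v_{xx}\in H$), the chain rule gives $\tfrac{d}{dt}\tilde v(t) = \dot C(t)\,\tilde v_x(t)\in H\hookrightarrow V'$, and being a translate of $\hat v$, $\tilde v(t)$ satisfies $\nu\,\tilde v_{xx}(t) = c\,\tilde v_x(t) - bf(\tilde v(t))$ in $V'$. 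Now from \eqref{RDDet} and $\tfrac{d}{dt}\hat v(t) = c\hat v_x(t) = \nu\hat v_{xx}(t) + bf(\hat v(t))$ one gets $\tfrac{d}{dt}v(t) = \nu v_{xx}(t) + bf(v(t))$; substituting $v = \tilde u + \tilde v$, subtracting $\dot C(t)\tilde v_x(t)$, and replacing $\nu\tilde v_{xx}(t)$ by $c\tilde v_x(t) - bf(\tilde v(t))$ yields
\begin{equation*}
\frac{d\tilde u}{dt}(t) = \nu\Delta\tilde u(t) + b\bigl(f(\tilde u(t)+\tilde v(t)) - f(\tilde v(t))\bigr) - (\dot C(t)-c)\,\tilde v_x(t),
\end{equation*}
i.e. the first line of \eqref{RDDet1}; the second line follows by writing $\tilde G(t,\tilde u) = f'(\tilde v)\,\tilde u + \tilde R(t,\tilde u)$.

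The hard part is not the algebra but the bookkeeping: verifying that each manipulation above is legitimate as an identity in $V'$, in particular that $t\mapsto\tilde v(t)$ is genuinely differentiable in $H$ (the continuity-of-translation estimate) and that every nonlinear term produced actually lies in $V'$ — this is where {\bf (B1)}--{\bf (B3)} and the integrability of $\hat v_x$, $\hat v_{xx}$ from Lemma \ref{lem1_1} and Proposition \ref{prop0} are needed. One also has to confirm that $C$ is defined on all of $[0,T]$ and $C^1$ there, so that $\dot C(t)\,\tilde v_x(t)$ is a bona fide element of $V'$; this was prepared by Proposition \ref{prop3_4}.
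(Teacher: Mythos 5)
Your proof is correct and is precisely the computation the paper declares to be an ``immediate consequence of \eqref{RDDet} and \eqref{ODE}'': differentiate $\tilde u = v-\tilde v$, use the chain rule for the translate $\tilde v(t)=\hat v(\cdot+C(t))$ (legitimate since $C$ is $C^1$ by Proposition \ref{prop3_4} and Picard--Lindel\"of, and $\hat v_x,\hat v_{xx}\in H$), and substitute the travelling-wave identity $\nu\tilde v_{xx}=c\tilde v_x-bf(\tilde v)$. The only cosmetic caveat is that $v(t)$ and $\hat v(t)$ themselves do not lie in $H$, so the intermediate identity $\tfrac{d}{dt}v=\nu v_{xx}+bf(v)$ should be read as an identity for the $H$-valued differences --- equivalently, one works throughout with $\tilde u=u+\int_{C(t)}^{ct}\hat v_x(\cdot+y)\,dy$, exactly as you do in the regularity part of your argument.
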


\noindent
The proof of the Proposition is an immediate consequence of \eqref{RDDet} and \eqref{ODE} (resp. \eqref{ODE1}). {\bf (B3)} implies 
for the remainder 
$\tilde{R}$ the following estimate 
\begin{equation} 
\label{remainder}  
\begin{aligned} 
\langle \tilde{R} (t,u),u\rangle 
& \le \eta_2 \int (1 + |u|)|u|^3\, dx 
\le \eta_2 \left( \|u\|_\infty + \|u\|_\infty^2 \right) \|u\|_H^2 \\ 
& \le \eta_2 \left( \|u\|_H + \|u\|_H^2 \right) \|u\|_V^2 \, . 
\end{aligned} 
\end{equation}

\medskip 
\noindent 
We are now ready to state our main result in the deterministic case: 

\begin{theorem}
\label{th2} 
Recall the definition of $\kappa_\ast$ and $C_\ast$ in Theorem \ref{th1}. Let $m\ge C_\ast$. If the initial condition 
$v_0 = u_0 + \hat{v}$ is close to $\hat{v}$ in the sense that 
$$  
\|u_0\|_H < \left(\delta \frac{\kappa_\ast}{2b\eta_2}\right)\wedge 1
$$ 
for some $\delta < 1$ and $v(t) = u(t) + \hat{v} (t)$, where $u(t)$ is the unique solution of \eqref{RDDet}, then 
$$ 
\|v(t) - \hat{v} \left( \cdot + C(t)\right)\|_H \le e^{-(1-\delta)\kappa_\ast t} \|v_0 -\hat{v}\|_H\, . 
$$ 
\end{theorem}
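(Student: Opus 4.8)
The plan is to run a Lyapunov-type Gronwall argument on $\|\tilde u(t)\|_H^2$, where $\tilde u(t) = v(t) - \tilde v(t)$ as in \eqref{tildeU}, using the evolution equation \eqref{RDDet1} together with the key dissipativity estimate of Theorem \ref{th1}. First I would differentiate $\frac{d}{dt}\tfrac12\|\tilde u(t)\|_H^2 = {}_{V'}\langle \frac{d\tilde u}{dt}, \tilde u\rangle_V$ and substitute the first line of \eqref{RDDet1}, obtaining
$$
\frac{d}{dt}\tfrac12\|\tilde u\|_H^2 = {}_{V'}\langle \nu\Delta\tilde u + b f'(\tilde v)\tilde u, \tilde u\rangle_V + b\langle \tilde R(t,\tilde u),\tilde u\rangle - (\dot C - c)\langle \tilde v_x,\tilde u\rangle.
$$
Here one must be a little careful: Theorem \ref{th1} is stated for the operator along the \emph{stationary} wave $\hat v$, whereas \eqref{RDDet1} involves the shifted wave $\tilde v(t) = \hat v(\cdot + C(t))$; since the functional inequality \eqref{FunctIneq} and all the quantities $\kappa_\ast, C_\ast$ are translation invariant, Theorem \ref{th1} applies verbatim with $\hat v$ replaced by $\tilde v(t)$, giving
$$
{}_{V'}\langle \nu\Delta\tilde u + b f'(\tilde v)\tilde u,\tilde u\rangle_V \le -\kappa_\ast\|\tilde u\|_V^2 + C_\ast\langle \tilde u,\tilde v_x\rangle^2.
$$

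Next I would observe that the ODE \eqref{ODE1} defining $C(t)$ is engineered precisely so that $\dot C(t) - c = m\langle v(t)-\tilde v(t),\tilde v_x(t)\rangle = m\langle \tilde u(t),\tilde v_x(t)\rangle$. Hence the last term of the energy identity is $-(\dot C - c)\langle\tilde v_x,\tilde u\rangle = -m\langle\tilde u,\tilde v_x\rangle^2$, which is nonpositive and, for $m\ge C_\ast$, absorbs the bad term $C_\ast\langle\tilde u,\tilde v_x\rangle^2$ coming from Theorem \ref{th1}. This is the conceptual heart of the argument: the dynamically chosen phase $C(t)$ kills exactly the one non-dissipative direction $\tilde v_x$. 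After this cancellation we are left with
$$
\frac{d}{dt}\tfrac12\|\tilde u\|_H^2 \le -\kappa_\ast\|\tilde u\|_V^2 + b\eta_2\bigl(\|\tilde u\|_H + \|\tilde u\|_H^2\bigr)\|\tilde u\|_V^2,
$$
using the remainder estimate \eqref{remainder}. On the set where $\|\tilde u(t)\|_H < 1$ and $\|\tilde u(t)\|_H < \delta\kappa_\ast/(2b\eta_2)$ we have $b\eta_2(\|\tilde u\|_H + \|\tilde u\|_H^2) \le 2b\eta_2\|\tilde u\|_H \le \delta\kappa_\ast$, so the right-hand side is bounded by $-(1-\delta)\kappa_\ast\|\tilde u\|_V^2 \le -(1-\delta)\kappa_\ast\|\tilde u\|_H^2$, whence $\frac{d}{dt}\|\tilde u\|_H^2 \le -2(1-\delta)\kappa_\ast\|\tilde u\|_H^2$ and Gronwall gives the claimed exponential decay.

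The main obstacle is a bootstrap/continuity issue rather than the estimate itself: the differential inequality is only valid while $\|\tilde u(t)\|_H$ stays below the threshold $\rho := (\delta\kappa_\ast/(2b\eta_2))\wedge 1$, so I must run a standard continuation argument. Set $T^\ast = \sup\{t\ge 0 : \|\tilde u(s)\|_H < \rho \text{ for all } s\le t\}$; by the initial-condition hypothesis $\|\tilde u(0)\|_H = \|u_0\|_H < \rho$, so $T^\ast>0$, and on $[0,T^\ast)$ the inequality above yields $\|\tilde u(t)\|_H \le e^{-(1-\delta)\kappa_\ast t}\|\tilde u(0)\|_H \le \|u_0\|_H < \rho$; since $\|\tilde u(0)\|_H = \|v_0-\hat v\|_H = \|u_0\|_H$, this shows $\|\tilde u(t)\|_H$ never reaches $\rho$, hence $T^\ast = \infty$ and the bound holds for all $t$. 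A secondary technical point to address is the regularity needed to justify the energy identity $\frac{d}{dt}\tfrac12\|\tilde u\|_H^2 = {}_{V'}\langle \dot{\tilde u},\tilde u\rangle_V$ — this is the standard Itô/Lions formula for variational solutions, available here because $\tilde u \in L^2([0,T];V)$ with $\dot{\tilde u}\in L^2([0,T];V')$ by Proposition \ref{prop1_3} (the extra forcing term $(\dot C - c)\tilde v_x$ lies in $L^2([0,T];H)$ since $\dot C - c$ is bounded on $[0,T]$ by Proposition \ref{prop3_4} and $\tilde v_x = \hat v_x(\cdot+C(t))$ has fixed $H$-norm). Finally, $\|v(t)-\hat v(\cdot+C(t))\|_H = \|\tilde u(t)\|_H$ by definition of $\tilde u$, which is the left-hand side of the claimed inequality; together with $\|\tilde u(0)\|_H = \|v_0-\hat v\|_H$ this completes the proof.
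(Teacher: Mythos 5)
Your proposal is correct and follows essentially the same route as the paper's own proof: the same energy identity for $\|\tilde u\|_H^2$, the same use of translation invariance to apply Theorem \ref{th1} along $\tilde v(t)$, the same absorption of the $C_\ast\langle\tilde u,\tilde v_x\rangle^2$ term by the $-m\langle\tilde u,\tilde v_x\rangle^2$ term coming from the phase ODE, the same remainder bound \eqref{remainder}, and the same stopping-time/continuation argument with the threshold $(\delta\kappa_\ast/(2b\eta_2))\wedge 1$. No gaps.
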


\begin{proof} 
Let $\tilde{u} (t) := v(t) - \tilde{v} (t)$ be as in \eqref{tildeU}. Then Proposition \ref{prop1_3} and equation \eqref{remainder} imply that 
\begin{equation} 
\label{EstTh2_1} 
\begin{aligned} 
\frac 12 \frac{d}{dt} \|\tilde{u}(t)\|^2_H 
& = \langle \nu \Delta\tilde{u}(t) + bf^\prime (\tilde{v}(t)) \tilde{u} (t), \tilde{u}(t)\rangle 
     + b\langle \tilde{R} (t, \tilde{u} (t)) , \tilde{u} (t) \rangle \\ 
& \quad - m\langle \tilde{v}_x(t) , \tilde{u} (t)\rangle^2 \\ 
& \le \langle \nu \Delta\tilde{u}(t) + bf^\prime (\tilde{v}(t)) \tilde{u} (t), \tilde{u}(t)\rangle \\
& \quad + b\eta_2 \left( \|\tilde{u} (t)\|_H + \|\tilde{u}(t)\|_H^2 \right) \|\tilde{u}(t)\|_V^2  
   - m \langle \tilde{v}_x (t) , \tilde{u} (t)\rangle^2\, . 
\end{aligned} 
\end{equation} 
Using translation invariance of $\nu \Delta$ and $\int u_x^2\, dx$, Theorem \ref{th1} yields the estimate  
\begin{equation} 
\label{EstTh2_2} 
\begin{aligned} 
\langle \nu \Delta\tilde{u}(t) & + bf^\prime (\tilde{v}^{TW}(t)) \tilde{u} (t), \tilde{u}(t)\rangle \\ 
& \le - \kappa_\ast \|\tilde{u} (t)\|_V^2 
+ C_\ast \langle \tilde{u}(t), \tilde{v}_x\rangle^2 \, .   
\end{aligned} 
\end{equation} 
Inserting \eqref{EstTh2_2} into \eqref{EstTh2_1} yields that 
$$ 
\begin{aligned} 
\frac 12 \frac d{dt} \|\tilde{u} (t) \|_H^2 
& \le - \kappa_\ast \|\tilde{u} (t) \|^2_V  + b\eta_2 \left( \|\tilde{u} (t)\|_H + \|\tilde{u}(t)\|_H^2 \right) \|\tilde{u} (t) \|_V^2 \, . 
\end{aligned} 
$$ 

\medskip 
\noindent 
In the next step we define the stopping time 
$$ 
T := \inf \left\{ t\ge 0 \mid \|\tilde{u}(t)\|_H \ge \left( \delta \frac{\kappa_\ast}{2b\eta_2}\right) \wedge 1\right\}  
$$ 
with the usual convention $\inf \emptyset = \infty$. Continuity of $t\mapsto \|\tilde{u}(t)\|_H$ implies that 
$T > 0$ since $\|u_0\|_H < \left( \delta \frac{\kappa_\ast}{2b\eta_2}\right)\wedge 1$. For $t < T$ note that 
$$ 
\frac 12 \frac d{dt} \|\tilde{u} (t) \|_H^2 \le - (1- \delta )\kappa_\ast \|\tilde{u} (t) \|^2_V  
\le - (1- \delta )\kappa_\ast \|\tilde{u} (t) \|^2_H 
$$ 
which implies that 
$$ 
\|\tilde{u}(t)\|^2_H \le e^{-2(1-\delta )\kappa_\ast t} \|u_0\|^2_H  
$$ 
for $t < T$. Suppose now that $T < \infty$. Then continuity of $t\mapsto \|\tilde{u}(t)\|_H$ implies on the one hand 
that $\|\tilde{u}(T)\|_H = \left( \delta \frac{\kappa_\ast}{2b\eta_2} \right)\wedge 1$ and on the other hand, using the last inequality,   
$$ 
\|\tilde{u}(T)\|_H = \lim_{t\uparrow T} \|\tilde{u} (t)\|_H \le e^{-(1-\delta )\kappa_\ast T} \|u_0\|_H < \left( \delta \frac{\kappa_\ast}{2b\eta_2}\right)\wedge 1 
$$ 
which is a contradiction. Consequently, $T = \infty$ and thus 
$$
\|\tilde{u} (t)\|_H \le e^{-(1-\delta )\kappa_\ast t} \|u_0\|_H \qquad\forall t\ge 0 
$$ 
which implies the assertion. 
\end{proof}


\section{The reaction-diffusion equation with noise} 
\label{sec2}

\noindent 
In this section we will generalize the stability result for the reaction-diffusion equation \eqref{RDE} to the stochastic case. To this end 
we cosider the following equation 
\begin{equation} 
\label{StochRDE} 
\begin{aligned} 
dv (t) & = \left[ \nu\partial^2_{xx} v (t) +  bf(v(t))\right]\, dt  + \Sigma_0 (v(t))\, dW (t) 
\end{aligned} 
\end{equation} 
where $W = (W (t))_{t\ge 0}$ is a cylindrical Wiener process with values in some separable real Hilbert space $U$ defined on some underlying 
filtered probability space $(\Omega , \cF , (\cF (t))_{t\ge 0}, P)$ and 
$$ 
\Sigma_0 : \hat{v} + H \mapsto L_2 (U, H) 
$$ 
is a measurable map with values in the linear space of all Hilbert-Schmidt operators from $U$ to $H$ such that there exists some constant $L_{\Sigma_0}$ with 
\begin{equation} 
\label{dispersion} 
\| \Sigma_0 ( \hat{v} + u_1 ) - \Sigma_0 (\hat{v} + u_2)\|_{L_2 (U,H)} \le L_{\Sigma_0} \|u_1 - u_2\|_H  \quad\forall u_1 \, , u_2 \in H\, . 
\end{equation} 
For the theory of cylindrical Wiener processes see \cite{PR}. To simplify presentation of the results we also assume the following translation 
invariance 
\begin{equation} 
\label{TransInv} 
\|\Sigma_0 (\hat{v}(\cdot - C))\|_{L_2 (U, H)} 
= \|\Sigma_0 (\hat{v} + \left(\hat{v}(\cdot - C)- \hat{v}\right))\|_{L_2 (U, H)} \, \forall C\in\R\, . 
\end{equation} 

\medskip 
\noindent 
A typical example covered by the assumptions is 
$$ 
dv (t) = \left[ \nu\partial^2_{xx} v (t) +  bf(v(t))\right]\, dt  + \sigma(v(t))\, dW^Q (t) 
$$ 
where $\sigma : \R \to\R$ is Lipschitz, $\sigma (0) = \sigma (1) = 0$, 
$W^Q$ is a $Q$-Wiener process with covariance operator $Q$ for which 
its square-root $\sqrt{Q}$ admits a kernel $k_{\sqrt{Q}} (x,y)\in L^2 (\R^2)$ satisfying 
$$ 
\sup_{x\in\R} \int k^2_{\sqrt(Q)}(x,y)\, dy < \infty 
$$  
(see \cite{St}).

\medskip 
\noindent 
Similar to the deterministic case we can give the equation a rigorous formulation as a stochastic evolution equation with values in the Hilbert space $H = L^2 (\R)$ by decomposing $v(t) = u(t) + \hat{v} (t)$ w.r.t. the travelling wave to obtain the following stochastic evolution equation 
\begin{equation} 
\label{StochRelRDE} 
\begin{aligned} 
du(t) & = \left[ \nu\Delta u (t) + bG(t, u(t))\right]\, dt + \Sigma (t , u (t))\, dW(t) 
\end{aligned} 
\end{equation} 
where the nonlinear term $G$ is as in \eqref{Nonlinear} and   
\begin{equation} 
\label{RealizationDispersion}
\Sigma (t,u)h := \Sigma_0 \left( \hat{v}(t) + u\right) h \, , \quad u\in H\, , h\in U\, ,    
\end{equation} 
is a continuous mapping 
$$ 
\Sigma (\cdot , \cdot ) : [ 0, \infty ) \times H\to L_2 (U,H) \, . 
$$ 
The assumptions \eqref{dispersion} and \eqref{TransInv} on the dispersion operator imply   
\begin{equation}
\label{Dispersion1} 
\| \Sigma (t, u_1 ) - \Sigma (t, u_2)\|_{L_2 (U,H)} \le L_{\Sigma_0}  \|u_1 - u_2\|_H 
\end{equation} 
and 
\begin{equation} 
\label{Dispersion2} 
\| \Sigma (t, u)\|_{L_2 (U,H)} \le \| \Sigma_0 (\hat{v})\|_{L_2 (U,H)}  + L_{\Sigma_0} \|u\|_H \, . 
\end{equation} 

\medskip 
\noindent 
We now consider the equation \eqref{StochRDE} w.r.t. the same triple $V\hookrightarrow H \equiv H^\prime\hookrightarrow V^\prime$ 
as in the deterministic case. Due to the properties \eqref{Est1}, \eqref{Est2}, \eqref{Est3} and \eqref{Est4}, 
we can deduce from Theorem 1.1. in \cite{LR} for all finite $T$ and all (deterministic) initial conditions $u_0\in H$ the 
existence and uniqueness of a solution $(u(t))_{t\in [0, T]}$ of \eqref{StochRDE} satisfying the moment estimate 
$$ 
E\left( \sup_{t\in[0,T]} \|u(t)\|^2_H + \int_0^T \|u(t)\|_V^2\, dt \right) < \infty \, . 
$$ 
In particular, for any $m\in\R$, we can apply Proposition \ref{prop3_4} to a typical trajectory $u(\cdot )(\omega )$ to obtain a unique solution $C(\cdot )(\omega )$ 
of the ordinary differential equation \eqref{ODE1}. It is also clear that the resulting stochastic process $(C(t))_{t\ge 0}$ is $(\cF_t )_{t\ge 0}$-adapted, 
since $(u(t))_{t\ge 0}$ is. 

\medskip 
\noindent 
In the next step let us consider the stochastic process 
$$ 
\tilde{u} (t) = u(t) + \hat{v}(t) - \hat{v}(\cdot + C(t))  = v(t) -\tilde{v}(t)  
$$ 
which is $(\cF_t)_{t\ge 0}$ adapted too and satisfies the stochastic evolution equation 
$$ 
d\tilde{u} (t) = \left[ \nu\Delta \tilde{u} (t) + b \tilde{G} (t, \tilde{u}(t))  
- (\dot{C} (t) -c)\tilde{v}_x (t)\right] \, dt + \tilde{\Sigma} (t, \tilde{u} (t))\, dW(t) \, ,  
$$
where 
$$ 
\tilde{G} (t,u) = f(u + \tilde{v} (t) ) -  f(\tilde{v}(t)) \, , 
\tilde{\Sigma} (t,u) = \Sigma (t, u + \tilde{v} (t)) \, , 
$$ 
and the moment estimates 
$$ 
E\left( \sup_{t\in [0,T]} \|\tilde{u} (t)\|_H^2 + \int_0^T \|\tilde{u} (t)\|^2_V\, dt \right) < \infty\, . 
$$ 
Theorem 4.2.5 in \cite{PR} now implies that the real-valued stochastic process $\|\tilde{u}\|^2_H (t)$ is a continuous local semimartingale so that 
we have in particular the following time-dependent Ito-formula 
\begin{equation} 
\label{ItoFormula} 
\begin{aligned} 
\varphi (t,\|\tilde{u}(t)\|^2_H ) 
& = \int_0^t \varphi_t (s, \|\tilde{u} (s)\|^2_H) + 2\varphi_x (s,\|\tilde{u} (s)\|^2_H) 
\langle \nu\Delta \tilde{u} (s)   \\
& \quad + b\tilde{G}(s, \tilde{u} (s)) - \dot{C} (s) \tilde{w} (s), \tilde{u} (s) \rangle  \\
& \quad + \varphi_x (s,\|\tilde{u} (s)\|^2_H)\|\tilde{\Sigma} (s,\tilde{u} (s))\|^2_{L_2(H)} \\ 
& \quad + \varphi_{xx}  (s,\|\tilde{u} (s)\|_H^2) 2\|\tilde{\Sigma}^\ast (s,\tilde{u} (s))\tilde{u} (s)\|_H^2\, ds \\
& \quad  + \int_0^t \varphi_x (s, \|\tilde{u} (s)\|^2_H )\, d\tilde{M}_s 
\end{aligned} 
\end{equation} 
for any $\varphi\in C^{1,2}([0,T]\times \R_+)$. Here, $\tilde{\Sigma}^\ast (s,u)$ denotes the adjoint operator of $\tilde{\Sigma} (s,u)$.

\begin{theorem} 
\label{th3} 
Recall the definition of $\kappa_\ast$ and $C_\ast$ in Theorem \ref{th1} and assume that $L^2_{\Sigma_0} \le \frac{\kappa_\ast} 4$. 
Let $v_0 = u_0 + \hat{v}$ and $v(t) = u(t) + \hat{v} (t)$, where $u(t)$ is the unique solution of the stochastic evolution equation 
\eqref{StochRDE} and $\tilde{u}(t) = u(t) + \hat{v} (t) - \tilde{v}(t)$. Then 
$$ 
P\left( T < \infty \right) \le \frac 1{c^2_\ast} \left( \|\tilde{u} (0)\|^2_H + \frac 4{\kappa_\ast} \|\Sigma_0 (\hat{v})\|^2_{L_2 (U,H)} \right) 
$$ 
where $T$ denotes the first exit time 
\begin{equation} 
\label{ExitTime} 
T := \inf\{t\ge 0\mid \|\tilde{u} (t) \|_H > c_\ast \} \, , \qquad c_\ast = \left( \frac{\kappa_\ast}{4b\eta_2}\right)\wedge 1\, , 
\end{equation} 
with the usual convention $\inf\emptyset = \infty$. 
\end{theorem}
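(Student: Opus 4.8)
The strategy is to apply the Ito formula \eqref{ItoFormula} with a well-chosen weight $\varphi$ and to estimate the drift terms using Theorem \ref{th1} (via the translation-invariant estimate \eqref{EstTh2_2} as in the proof of Theorem \ref{th2}), the remainder bound \eqref{remainder}, and the dispersion bounds \eqref{Dispersion1}--\eqref{Dispersion2}; then a supermartingale argument up to the exit time $T$ together with a maximal/Chebyshev-type inequality will yield the claimed bound on $P(T<\infty)$. First I would compute, on the stochastic interval $[0,T)$, the drift of $\|\tilde u(t)\|_H^2$. Exactly as in \eqref{EstTh2_1}, the term $-(\dot C(t)-c)\langle\tilde v_x(t),\tilde u(t)\rangle = -m\langle\tilde v_x(t),\tilde u(t)\rangle^2$ is nonpositive (here one uses $m\ge C_\ast$, which is implicit via the choice of the ODE \eqref{ODE1}; in fact the statement only needs $m\ge C_\ast$ so that the tangential correction absorbs the $C_\ast\langle\tilde u,\tilde v_x\rangle^2$ term coming from Theorem \ref{th1}). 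Combining \eqref{EstTh2_2}, \eqref{remainder} and the fact that on $[0,T)$ one has $\|\tilde u(t)\|_H\le c_\ast\le\frac{\kappa_\ast}{4b\eta_2}\wedge 1$, so that $b\eta_2(\|\tilde u\|_H+\|\tilde u\|_H^2)\le 2b\eta_2\|\tilde u\|_H\le\frac{\kappa_\ast}{2}$, the drift of $\|\tilde u(t)\|_H^2$ is bounded above by
$$
-\kappa_\ast\|\tilde u(t)\|_V^2 + \tfrac{\kappa_\ast}{2}\|\tilde u(t)\|_V^2 + \|\tilde\Sigma(t,\tilde u(t))\|_{L_2(U,H)}^2
\le -\tfrac{\kappa_\ast}{2}\|\tilde u(t)\|_H^2 + \|\tilde\Sigma(t,\tilde u(t))\|_{L_2(U,H)}^2,
$$
using $\|\cdot\|_H\le\|\cdot\|_V$ in the last step.

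\textbf{Handling the dispersion term.} By \eqref{TransInv} and \eqref{Dispersion1}--\eqref{Dispersion2}, $\|\tilde\Sigma(t,\tilde u(t))\|_{L_2(U,H)}^2\le\bigl(\|\Sigma_0(\hat v)\|_{L_2(U,H)}+L_{\Sigma_0}\|\tilde u(t)\|_H\bigr)^2\le 2\|\Sigma_0(\hat v)\|_{L_2(U,H)}^2 + 2L_{\Sigma_0}^2\|\tilde u(t)\|_H^2$. With the hypothesis $L_{\Sigma_0}^2\le\kappa_\ast/4$ this absorbs into the dissipative term: the drift is at most $-\tfrac{\kappa_\ast}{2}\|\tilde u\|_H^2 + \tfrac{\kappa_\ast}{2}\|\tilde u\|_H^2 + 2\|\Sigma_0(\hat v)\|_{L_2(U,H)}^2 = 2\|\Sigma_0(\hat v)\|_{L_2(U,H)}^2$. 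Hence on $[0,T)$ the process $\|\tilde u(t)\|_H^2 - 2\|\Sigma_0(\hat v)\|_{L_2(U,H)}^2\,t - \tilde M_t$ is nonincreasing. Actually it is cleaner to keep the dissipative margin: choosing the split so that a strictly negative coefficient $-\lambda\|\tilde u\|_H^2$ survives (any $\lambda\in(0,\kappa_\ast/2]$ works once $L_{\Sigma_0}^2\le\kappa_\ast/4$), the process $\|\tilde u(\cdot\wedge T)\|_H^2$ is a nonnegative continuous local supermartingale plus the explicit drift $2\|\Sigma_0(\hat v)\|^2_{L_2(U,H)}(t\wedge T)$; but for the stated estimate the simplest route is to localize, take expectations, and use that the martingale part has zero expectation after a localizing sequence of stopping times $T_n\uparrow\infty$.

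\textbf{From the supermartingale bound to $P(T<\infty)$.} Define $Z(t):=\|\tilde u(t\wedge T)\|_H^2 e^{-2L_{\Sigma_0}^2 t}$ — or, more transparently, run the estimate backwards: on $\{T<\infty\}$ continuity of $t\mapsto\|\tilde u(t)\|_H$ forces $\|\tilde u(T)\|_H = c_\ast$, i.e. $\|\tilde u(T)\|_H^2 = c_\ast^2$. Using the supermartingale-with-drift property, for any $t$, $\E\bigl(\|\tilde u(t\wedge T)\|_H^2\bigr)\le\|\tilde u(0)\|_H^2 + \tfrac{4}{\kappa_\ast}\|\Sigma_0(\hat v)\|_{L_2(U,H)}^2$ once one checks that the accumulated drift $\int_0^{t\wedge T} 2\|\Sigma_0(\hat v)\|^2\,ds$ is dominated by the dissipation; the bookkeeping that produces the constant $\tfrac{4}{\kappa_\ast}$ rather than a $t$-dependent quantity is precisely the point where keeping a margin $\lambda>0$ matters — one writes the drift as $-\lambda\|\tilde u\|_H^2 + 2\|\Sigma_0(\hat v)\|^2$, notes $\int_0^\infty e^{-\lambda s}\,ds=1/\lambda$ after an integrating-factor computation, and optimizes $\lambda$. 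Then by Chebyshev, $P(T\le t)\,c_\ast^2\le\E\bigl(\|\tilde u(t\wedge T)\|_H^2\bigr)\le\|\tilde u(0)\|_H^2 + \tfrac{4}{\kappa_\ast}\|\Sigma_0(\hat v)\|_{L_2(U,H)}^2$, and letting $t\to\infty$ (monotone convergence for $P(T\le t)$) gives the claim.

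\textbf{Main obstacle.} The routine parts are the drift estimate and the dispersion absorption; the delicate point is the passage from the instantaneous supermartingale inequality to the \emph{$t$-uniform} moment bound $\E\|\tilde u(t\wedge T)\|_H^2\le\|\tilde u(0)\|_H^2+\tfrac{4}{\kappa_\ast}\|\Sigma_0(\hat v)\|^2$, because a naive Gronwall argument only gives $\|\tilde u(0)\|_H^2 + 2\|\Sigma_0(\hat v)\|^2 t$. This requires genuinely exploiting the dissipative term that survives after absorbing the noise (feasible precisely because $L_{\Sigma_0}^2\le\kappa_\ast/4$ leaves a strictly positive margin), via an integrating factor $e^{\lambda t}$ and the time-dependent Ito formula \eqref{ItoFormula} with $\varphi(t,y)=e^{\lambda t}y$, together with a localization argument to kill the local-martingale term. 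A secondary technical care is justifying that all the pathwise a priori estimates used inside the drift (\eqref{EstTh2_2}, \eqref{remainder}) are legitimate for the variational solution and that the stopping at $T$ keeps $\|\tilde u\|_H$ below $c_\ast$ so those estimates apply; this is handled exactly as in the proof of Theorem \ref{th2}.
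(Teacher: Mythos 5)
Your proposal is correct and follows essentially the same route as the paper: the same drift estimate via Theorem \ref{th1} with the remainder absorbed on $\{\|\tilde u\|_H\le c_\ast\}$, absorption of the noise intensity using $L_{\Sigma_0}^2\le\kappa_\ast/4$, the time-dependent Ito formula with $\varphi(t,x)=e^{\frac{\kappa_\ast}{2}t}x$ (your integrating factor with $\lambda=\kappa_\ast/2$), localization to remove the local martingale, and Chebyshev plus $t\uparrow\infty$. The "main obstacle" you flag — trading the surviving dissipative margin against the exponential weight to get a $t$-uniform moment bound with constant $4/\kappa_\ast$ — is exactly the computation the paper performs.
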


\begin{proof} 
Similar to the proof of Theorem \ref{th1} we have the following inequality 
$$
\begin{aligned} 
& \langle \nu\Delta \tilde{u} (t) + b\tilde{G}(t, \tilde{u} (t)) 
- (\dot{C} (t) - c) \tilde{v}_x  (t), \tilde{u} (t) \rangle \\
& 
\qquad \le -\kappa_\ast \|\tilde{u} (t)\|^2_V + b\eta_2 \left(\|\tilde{u}(t)\|_H + \|\tilde{u}(t)\|_H^2 \right) 
\|\tilde{u}(t)\|^2_V\, . 
\end{aligned} 
$$
In particular,  
$$ 
\langle \nu\Delta \tilde{u} (t) + b\tilde{G}(t, \tilde{u} (t)) 
- (\dot{C} (t)-c) \tilde{v}_x (t), \tilde{u} (t) \rangle 
\le -\frac{\kappa_\ast}2 \|\tilde{u} (t)\|^2_V 
$$
for $t\le T$, where $T$ is as in \eqref{ExitTime}. \eqref{Dispersion2} 
and \eqref{TransInv} imply 
$$  
\|\tilde{\Sigma} (\tilde{u} (t))\|_{L_2 (H)}^2 \le 2 \left( L_{\Sigma_0}^2 \|\tilde{u}(t)\|^2_H  + \|\Sigma_0 (\hat{v})\|^2_{L_2 (U,H)} \right) 
$$ 
and therefore 
$$
\begin{aligned} 
& 2\langle \nu\Delta \tilde{u} (t) + b\tilde{G}(t, \tilde{u} (t)) 
- \dot{C} (t) \tilde{v}_x (t), \tilde{u} (t) \rangle + \|\tilde{\Sigma} (t,\tilde{u} (t))\|^2_{L_2 (H)} \\ 
\qquad & \le -\frac{\kappa_\ast}2 \|\tilde{u} (t)\|^2_V + 2 
\|\Sigma_0 (\hat{v})\|^2_{L_2 (U,H)}\, . 
\end{aligned} 
$$ 
Applying Ito's formula \eqref{ItoFormula} to $e^{\frac {\kappa_\ast} 2 t}x$, then yields for $t < T$ that 
$$ 
\begin{aligned} 
e^{\frac{\kappa_\ast} 2 t}\|\tilde{u} (t)\|^2_H 
& \le \|\tilde{u}(0)\|^2_H + \frac 4{\kappa_\ast} \left( e^{\frac{\kappa_\ast} 2 t} - 1\right)\|\Sigma_0 (\hat{v})\|^2_{L_2 (U,H)} \\
& \qquad + \int_0^t e^{\frac{\kappa_\ast} 2 s}\, d\tilde{M}_s\, . 
\end{aligned} 
$$ 
Taking expectations we obtain  
$$ 
E\left( \|\tilde{u} (t\wedge T )\|^2_H \right) \le  \|\tilde{u}(0)\|^2_H + \frac 4{\kappa_\ast}\|\Sigma_0 (\hat{v})\|^2_{L_2 (U,H)} 
$$ 
and thus in the limit $t\uparrow\infty$ 
$$ 
\begin{aligned} 
c_\ast^2 P\left( T < \infty \right) 
& =  E\left( \|\tilde{u} (T)1_{T < \infty} \|^2_H \right) \le \lim_{t\uparrow\infty} E\left( \|\tilde{u}(t\wedge T) \|^2_H \right)  \\
& \le  \|\tilde{u}(0)\|^2_H + \frac 4{\kappa_\ast} \|\Sigma_0 (\hat{v})\|^2_{L_2 (U,H)} 
\end{aligned} 
$$ 
which implies the assertion. 
\end{proof}


\section
{Proof of Lemma \ref{lem1_1}, Proposition \ref{prop0}  and Theorem \ref{th0}} 
\label{Section0}

\subsection{Proof of Lemma \ref{lem1_1} and Proposition \ref{prop0}}

\begin{proof} {\bf (of Lemma \ref{lem1_1})}   
For the proof of (i) note that $\hat{v}_x \ge 0$ and $\int^{\infty}_{-\infty} \hat{v}_x dx = \lim_{x\to \infty } \hat{v} (x) - \hat{v} (-x) = 1$. 
In particular, $\hat{v}_x \in L^1 (\R)$ which implies that $\lim_{n\to\infty} \hat{v}_x (x_n) = 0$ for some sequence $x_n\uparrow\infty$. It follows for all $x$ that
$$ 
\begin{aligned} 
\hat{v}^2_x (x) 
& = \hat{v}_x^2 (x_n) - 2 \int^{x_n}_{x} \hat{v}_{xx} \hat{v}_x \, dx \\
& = \hat{v}^2_x (x_n) - 2 \frac{c}{\nu} \int^{x_n}_{x}\hat{v}^2_x \, dx + 2 \frac{b}{\nu} \int^{x_n}_x f (\hat{v}) \hat{v}_x \, dx \\ 
& \le \hat{v}_x^2 (x_n) + 2 \frac{b}{\nu} \int^{\hat{v} (x_n)}_{\hat{v} (x)} f(v) \, dv  \qquad \forall n\, .  
\end{aligned} 
$$ 
Consequently, 
$$ 
\hat{v} _x^2 (x) 
\le \lim_{n\to\infty} \hat{v}^2_x (x_n) + 2 \frac{b}{\nu} \int^{\hat{v} (x_n)}_{\hat{v} (x)} f (v)\, dv 
= \frac{2b}{\nu} \int^1_{\hat{v} (x)} f(v)\, dv \, . 
$$
In particular, 
$$ 
\lim_{x\to\infty} \hat{v}_x^2 (x) \le \limsup_{x\to\infty} \frac{2b}{\nu} \int^1_{\hat{v} (x)} f (v)\, dv = 0 
$$ 
and thus also $\lim_{x\to\infty} e^{ - \alpha\frac{c}{\nu}x} \hat{v}^2_x (x) = 0$ for all $\alpha\ge 0$.

\medskip 
\noindent 
For the proof of (ii) note that for all $\alpha \in \R$ 
\begin{equation}
\label{eq_lem1_1_1} 
\begin{aligned} 
\frac{d}{dx} ( e ^{- \alpha \frac{c}{\nu} x } \hat{v}^2_x ) 
& = \left( - \alpha \frac{c}{\nu}\hat{v}_x  + 2 \hat{v}_{xx}\right) e^{- \alpha \frac{c}{\nu} x} \hat{v}_x \\ 
& = (2 - \alpha) \frac{c}{\nu} e ^{- \alpha\frac{c}{\nu} x} \hat{v}^2_x - \frac{b}{\nu} e^{- \alpha \frac{c}{\nu} x } f (\hat{v}) \hat{v}_x\, . 
\end{aligned} 
\end{equation}
Taking $\alpha = 2 $ we conclude in particular that $\frac{d}{dx} \left( e^{- \alpha \frac{c}{\nu} x } \hat{v}^2_x \right) \ge 0$ 
(resp. $\le 0$) for $x \le v^{-1} (a)$ (resp. $x\ge v^{-1} (a)$), since $v_x \ge 0$ and $f(\hat{v} (x)) \le 0 $ (resp. $\ge 0$) 
for $x\le v^{-1} (a)$ (resp. $x\ge v^{-1} (a)$). Consequently, for $c\ge 0$, 
$$ 
\lim_{x\to-\infty} e^{- 2\frac{c}{\nu} x } \hat{v}^2_x (x) = \inf_{x \le v^{-1} (a)} e^{- 2\frac{c}{\nu} x } \hat{v}^2_x (x) =: \gamma < \infty 
$$ 
and thus for $\alpha < 2$  
$$ 
\lim_{x\to -\infty} e^{- \alpha\frac{c}{\nu} x} \hat{v}^2_x (x) \le \limsup_{x\to -\infty} e^{(2 - \alpha ) \frac{c}{\nu} x } \gamma = 0\, . 
$$ 
Similarly in the case $c\le 0$ 
$$ 
\lim_{x\to\infty} e^{- 2\frac{c}{\nu} x } \hat{v}^2_x (x) = \inf_{x \ge v^{-1} (a)} e^{- 2\frac{c}{\nu} x } \hat{v}^2_x (x) =: \gamma < \infty 
$$ 
and thus for $\alpha < 2$  
$$ 
\lim_{x\to\infty} e^{- \alpha\frac{c}{\nu} x} \hat{v}^2_x (x) \le \limsup_{x\to\infty} e^{(2 - \alpha ) \frac{c}{\nu} x } \gamma = 0\, . 
$$ 
Combining with (i) we obtain the assertion. 
\end{proof}

\bigskip 
\noindent 
Let us now turn to the proof of Proposition \ref{prop0}. Let $x_\ast = \hat{v}^{-1} (v_\ast )$ and $w(x) := e^{-\frac c{2\nu} x} \hat{v}_x (x)$. Then 
$$ 
w_{xx} = \left( \left( \frac c{2\nu}\right)^2 - \frac b\nu f'(\hat{v})\right) w\, , 
$$ 
since differentiating $c\hat{v}_x = \nu \hat{v}_{xx} + bf(\hat{v})$ implies $c\hat{v}_{xx} = \nu \hat{v}_{xxx} + bf' (\hat{v})\hat{v}_x$. 

\medskip 
\noindent 
{\bf Proof of Proposition \ref{prop0} (i)} Note that 
$$ 
\frac d{dx} \left( w_x^2 + \left( \frac b\nu f' (\hat{v}) - \left( \frac c{2\nu}\right)^2 \right) w^2\right) = \frac b\nu f'' \left( \hat{v}\right) \hat{v}_x w^2 
$$ 
is strictly increasing (resp. decreasing ) for $x < x_\ast $ (resp. $x > x_\ast $). According to Lemma \ref{lem1_1}  
$$
\lim_{|x| \to\infty}  \left( w_x^2 + \left( \frac b\nu f' (\hat{v}) - \left( \frac c{2\nu}\right)^2 \right) w^2\right) = 0 
$$ 
so that 
$$ 
w_x^2 + \left( \frac b\nu f' (\hat{v}) - \left( \frac c{2\nu}\right)^2 \right) w^2 \ge 0 \qquad \forall x\, . 
$$ 
Using $w_x = \left( \frac{c}{2 \nu} - \frac{b}{\nu} \frac{f(\hat{v})}{\hat{v}} \right) w$, we conclude that  
$$ 
\left( \frac{c}{2 \nu} - \frac{b}{\nu} \frac{f (\hat{v})}{\hat{v} _x } \right)^2 + \frac{b}{\nu} f^{\prime} (\hat{v}) - \left( \frac{c}{2 \nu} \right)^2 >  0 \, . 
$$ 
or equivalently 
\begin{equation} 
\label{th0:eq1} 
\frac{b}{\nu} f^{\prime} (\hat{v}) - \frac{b}{\nu} \frac{f (\hat{v})}{\hat{v} _x}\left( \frac c\nu - \frac{b}{\nu} \frac{ f(\hat{v})}{\hat{v}_x}\right) > 0 \, . 
\end{equation} 
In particular, 
$$ 
\begin{aligned} 
\frac b\nu \frac{d}{dx} \frac{f(\hat{v})}{\hat{v}_x} 
& = \frac b\nu f^{\prime} (\hat{v}) - \frac b\nu \frac{f (\hat{v})}{\hat{v}_x} \frac{\hat{v}_{xx}}{\hat{v}_x} > 0   
\end{aligned} 
$$
so that $\frac{f(\hat{v})}{\hat{v}_x}$ is strictly increasing which implies that $\hat{v}_x$ is log-concave, because 
$$ 
- \frac{d^2}{dx^2} \log\hat{v}_x = -\frac{d}{dx} \frac{\hat{v}_{xx}}{\hat{v}_x} = -\frac{d}{dx} \left( \frac c\nu - \frac b\nu \frac{f(\hat{v})}{\hat{v_x}} \right) > 0\, . 
$$ 

\medskip 
\noindent 
For the proof of part (ii) of Proposition \ref{prop0} we will first need the following 

\begin{lemma} 
\label{lem0_01} 
Let $K_+ := \frac {1-\hat{v}(x_0)}{\hat{v}_x (x_0)}$ and $K_- := \frac {\hat{v}(x_0)}{\hat{v}_x(x_0)}$. Then 
\begin{itemize} 
\item[(i)] $\frac {1-\hat{v}(x)}{\hat{v}_x (x)} \le K_+$ for $x\ge x_0$, 
\item[(ii)] $\frac {\hat{v}(x)}{\hat{v}_x (x)} \le K_-$ for $x\le x_0$. 
\end{itemize}
\end{lemma}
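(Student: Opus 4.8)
The plan is to deduce both parts from the log-concavity of $\hat v_x$ proved in Proposition \ref{prop0}(i). Write $\rho := \hat v_{xx}/\hat v_x = \frac{d}{dx}\log\hat v_x$; by Proposition \ref{prop0}(i) one has $-\rho' > 0$, so $\rho$ is strictly decreasing on all of $\R$. I first record two elementary facts: $\hat v_x > 0$ everywhere (else the travelling-wave equation would force $\hat v$ to be constant or to have an interior extremum, contradicting monotonicity), so all the quotients occurring below are well defined; and, from the boundary conditions $\hat v(-\infty) = 0$, $\hat v(+\infty) = 1$, one has $\hat v(x) = \int_{-\infty}^x \hat v_x(y)\,dy$ and $1 - \hat v(x) = \int_x^\infty \hat v_x(y)\,dy$, with both $\hat v(x)/\hat v_x(x)$ and $(1-\hat v(x))/\hat v_x(x)$ finite since $0\le\hat v\le 1$.

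For (i), substitute $y = x+t$ and use $\log\hat v_x(x+t) - \log\hat v_x(x) = \int_x^{x+t}\rho(s)\,ds$ to get
$$\frac{1-\hat v(x)}{\hat v_x(x)} = \int_x^\infty \frac{\hat v_x(y)}{\hat v_x(x)}\,dy = \int_0^\infty \exp\Bigl(\int_x^{x+t}\rho(s)\,ds\Bigr)\,dt .$$
For fixed $t\ge 0$, $\frac{d}{dx}\int_x^{x+t}\rho(s)\,ds = \rho(x+t) - \rho(x)\le 0$ because $\rho$ is decreasing, so the integrand is non-increasing in $x$, hence so is the whole integral; evaluating at $x\ge x_0$ gives $\frac{1-\hat v(x)}{\hat v_x(x)}\le\frac{1-\hat v(x_0)}{\hat v_x(x_0)} = K_+$. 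Part (ii) is mirror-symmetric: substituting $y = x - t$,
$$\frac{\hat v(x)}{\hat v_x(x)} = \int_{-\infty}^x \frac{\hat v_x(y)}{\hat v_x(x)}\,dy = \int_0^\infty \exp\Bigl(-\int_{x-t}^x\rho(s)\,ds\Bigr)\,dt ,$$
and now $\frac{d}{dx}\bigl(-\int_{x-t}^x\rho(s)\,ds\bigr) = \rho(x-t) - \rho(x)\ge 0$, so the integrand is non-decreasing in $x$, hence $\frac{\hat v(x)}{\hat v_x(x)}$ is non-decreasing, which for $x\le x_0$ yields $\frac{\hat v(x)}{\hat v_x(x)}\le K_-$. (In fact these monotonicities hold on all of $\R$; the restriction to $x\ge x_0$ resp. $x\le x_0$ is merely what is used later.)

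The only point needing a word of care is the legitimacy and finiteness of the two integral representations, which is immediate from the fundamental theorem of calculus together with $0\le\hat v\le 1$ and $\hat v_x>0$, so I do not anticipate a genuine obstacle. A completely equivalent route avoids the integral representation altogether: one computes $\frac{d}{dx}\frac{1-\hat v}{\hat v_x} = -1 - (1-\hat v)\frac{\hat v_{xx}}{\hat v_x^2}$ and observes that its non-positivity is the inequality $(1-\hat v)\hat v_{xx} + \hat v_x^2\ge 0$, i.e. $\frac{d}{dx}\log\frac{\hat v_x}{1-\hat v}\ge 0$, i.e. log-concavity of $1-\hat v$ — which follows from that of $\hat v_x$ — and symmetrically $\frac{\hat v}{\hat v_x}$ is non-decreasing iff $\hat v$ is log-concave; but the computation above makes these implications transparent and disposes of (i) and (ii) in one stroke.
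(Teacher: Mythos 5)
Your proof is correct, and it takes a genuinely different route from the paper's. The paper works with the ODE $\dot h = -1-\frac{\hat v_{xx}}{\hat v_x}h$ for $h=\frac{1-\hat v}{\hat v_x}$: it checks $\dot h<0$ near $x_0$ (which uses $\hat v(x_0)=a$, so that $\hat v_{xx}(x_0)\ge 0$), distinguishes cases according to whether $\dot h$ ever vanishes on $[x_0,\infty)$, and past such a point $x_+$ it abandons monotonicity of $h$ and instead bounds the tail $1-\hat v(x)=\int_x^\infty\hat v_x$ by a pure exponential, using that $-\hat v_{xx}/\hat v_x$ is increasing to get $1-\hat v(x)\le\frac{1-\hat v}{\hat v_x}(x_+)\hat v_x(x)\le K_+\hat v_x(x)$. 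You instead write $\frac{1-\hat v(x)}{\hat v_x(x)}=\int_0^\infty\exp\bigl(\int_x^{x+t}\rho(s)\,ds\bigr)\,dt$ and read off from the monotonicity of $\rho$ that each integrand, hence the whole quotient, is non-increasing in $x$ --- the standard ``log-concave density has monotone hazard rate'' argument. This yields a strictly stronger conclusion (both quotients are monotone on all of $\R$, with no case analysis and no use of the particular location of $x_0$ or of $c\ge 0$) at no extra cost: the only inputs are Proposition \ref{prop0}(i), $\hat v_x>0$, and the boundary conditions $\hat v(-\infty)=0$, $\hat v(+\infty)=1$, none of which depend on the lemma, so there is no circularity; and the comparison of integrals of pointwise-ordered nonnegative integrands needs no differentiation under the integral sign. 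Your closing ``alternative route'' is really the same fact restated (log-concavity of $\hat v_x$ passes to the integrated tails $1-\hat v$ and $\hat v$), of which your integral representation is precisely the proof.
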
  

\begin{proof} (i) Consider the function $h := \frac{ 1-\hat{v}}{\hat{v}_x}$. Clearly, 
$\dot{h} = - 1 - \frac{\hat{v}_{xx}}{\hat{v}_x} h$ is negative, hence $h$ decreasing, in a neighborhood of $x_0$. Since $\hat{v}_x$ is 
log-concave it follows that $-\frac{\hat{v}_{xx}}{\hat{v}_x}$ is increasing on $[x_0 , \infty)$. We may assume in the following 
that there exists some $x_+ > x_0$ with  
$$ 
-\frac{\hat{v}_{xx}}{\hat{v}_x} (x_+) = \frac{\hat{v}_x}{1-\hat{v}} (x_+) \, . 
$$ 
In fact, if this is not the case, then $\dot{h} \le 0$ for all $x\ge x_0$, hence $h$ decreasing on $[x_0 , \infty )$ which  
already implies the assertion. 

\smallskip 
\noindent 
So let us assume that $h$ is decreasing on $[x_0 , x_+]$ only. In particular, 
$\frac {1-\hat{v}(x)}{\hat{v}_x (x_+)} \le K_+$. For $x\ge x_+$ it follows that 
$-\frac{\hat{v}_{xx}}{\hat{v}_x} (x) \ge - \frac{\hat{v}_{xx}}{\hat{v}_x} (x_+) = \frac{\hat{v}_x}{1-\hat{v}} (x_+)$, hence 
$\frac{d}{dx} \left( e^{-\frac{\hat{v}_x}{1-\hat{v}} (x_+) x} \hat{v}_x \right) \le 0$, and consequently, 
$$ 
\begin{aligned} 
1-\hat{v}(x) 
& = \int_x^\infty \hat{v}_x(s)\, ds 
= \int_x^\infty  e^{\frac{\hat{v}_x}{1-\hat{v}} (x_+) s}  \left( e^{-\frac{\hat{v}_x}{1-\hat{v}}(x_+) (s)} \hat{v}_x(s) \right) \, ds \\ 
& \le \int_x^\infty  e^{\frac{\hat{v}_x}{1-\hat{v}} (x_+)s} \, ds \left( e^{-\frac{\hat{v}_x}{1-\hat{v}}(x_+) x} \hat{v}_x(x) \right) \\ 
& = \frac{\hat{v}}{1-\hat{v}_x}(x_+) \hat{v}_x (x) \le K_+ \hat{v}_x (x)\, . 
\end{aligned} 
$$ 

\smallskip 
\noindent 
(ii) is shown similar. 
\end{proof} 

\medskip 
\noindent 
{\bf Proof of Proposition \ref{prop0} (ii)} Since $f(0) = 0$ it follows that $\lim_{v\to 0} \frac{|f(v)|}{v} < \infty$ and thus 
$$ 
\limsup_{x\to -\infty} \frac{|f(\hat{v})|}{\hat{v}_x} (x) = \limsup_{x\to -\infty} \frac{|f(\hat{v})|}{\hat{v}} \frac{\hat{v}}{\hat{v}_x} (x) < \infty
$$ 
due to the previous Lemma \ref{lem0_01}. Similarly, $f(1) = 0$ implies that $\lim_{v\to 1} \frac{f(v)}{1-v} < \infty$ and thus 
$$ 
\limsup_{x\to\infty} \frac{f(\hat{v})}{\hat{v}_x} (x) = \limsup_{x\to\infty} \frac{f(\hat{v})}{1-\hat{v}} \frac{1-\hat{v}}{\hat{v}_x} (x) < \infty\, . 
$$
To compute $\gamma_-$ note that $\frac b\nu \frac{f(\hat{v})}{\hat{v}_x}$ is increasing in $x$, hence $\gamma_{-} = \lim_{x\to -\infty} \frac b\nu\frac{f(\hat{v})}{\hat{v}_x} (x) 
= \inf_{x\in\R} \frac b\nu \frac{f(\hat{v})}{\hat{v}_x}(x)$ exists, must be strictly negative and is finite. Applying l'Hospital's rule we obtain that 
$$ 
\gamma_- = \lim_{x\to -\infty} \frac b\nu \frac{f(\hat{v})}{\hat{v}_x}(x) = \lim_{x\to -\infty} \frac b\nu f^\prime (\hat{v})(x) \frac{\hat{v}_x}{\hat{v}_{xx}} (x) 
= \frac b\nu f^{\prime} (0) \frac 1{\frac c\nu - \gamma_{-}} 
$$ 
or equivalently, $\gamma_{-} \left(\frac c\nu - \gamma_{-}\right) =  \frac b\nu f^\prime (0)$. Since $\gamma_{-} < 0$ we obtain the assertion. $\gamma_+$ can be computed similarly. 

\medskip 
\noindent 
{\bf Proof of Proposition \ref{prop0} (iii)} The previous part implies for the logarithmic derivative of $\hat{v}_x$ that 
$$ 
\lim_{x\to -\infty}\frac{\hat{v}_{xx}}{\hat{v}_x} = \frac c\nu - \gamma_- > \frac c\nu 
$$
and 
$$ 
\lim_{x\to\infty}\frac{\hat{v}_{xx}}{\hat{v}_x} = \frac c\nu - \gamma_+ < \frac c\nu 
$$ 
so that for every $\alpha$ satisfying  $\alpha \frac c\nu < \frac c\nu - \gamma_-$ (resp.  $\alpha \frac c\nu > \frac c\nu - \gamma_+$) it follows that 
$e^{-\alpha\frac c\nu x}\hat{v}_x$ is increasing for small $x$ (resp. decreasing for large $x$). Hence $\int_{-\infty}^0 e^{-\alpha\frac c\nu x}\hat{v}_x^2\, dx < \infty$ 
(resp. $\int_0^\infty e^{-\alpha\frac c\nu x}\hat{v}_x^2\, dx < \infty$) in both cases.  

\medskip 
\noindent 
We can also now estimate 
$$ 
\int_{-\infty}^0 e^{-\alpha\frac c\nu x} \hat{v}_{xx}^2\, dx \le \sup_{x\in\R} \frac{|\hat{v}_{xx}|}{\hat{v}_x} \int_{-\infty}^0 e^{-\alpha\frac c\nu x} \hat{v}_x^2\, dx < \infty 
$$ 
for $\alpha\frac c\nu < \frac c\nu -\gamma_-$ and 
$$ 
\int_0^\infty e^{-\alpha\frac c\nu x} \hat{v}_{xx}^2\, dx \le \sup_{x\in\R} \frac{|\hat{v}_{xx}|}{\hat{v}_x} \int_0^\infty e^{-\alpha\frac c\nu x} \hat{v}_x^2\, dx < \infty 
$$ 
for $\alpha\frac c\nu > \frac c\nu -\gamma_+$, since 
$$ 
\sup_{x\in\R} \frac{|\hat{v}_{xx}|}{\hat{v}_x} \le \frac {|c|}\nu + \sup_{x\in\R} \frac {|f(\hat{v})|}{\hat{v}_x} < \infty 
$$ 
again due to the previous part (ii).

\subsection{Proof of Theorem \ref{th0}}

\bigskip 
\noindent 
Inequality \eqref{FunctIneq} is equivalent to   
\begin{equation} 
\label{Dissipativity1}
\frac b\nu f^\prime (\hat{v}) + 2\frac b\nu \frac{ f(\hat{v})}{\hat{v}_x} 
\left( \frac b\nu \frac{ f(\hat{v})}{\hat{v}_x} - \frac c\nu\right) \ge \kappa\, . 
\end{equation} 
Since 
$$ 
\frac b\nu f^\prime (\hat{v}) + 2\frac b\nu \frac{ f(\hat{v})}{\hat{v}_x} 
\left( \frac b\nu \frac{ f(\hat{v})}{\hat{v}_x} - \frac c\nu\right) 
> \frac b\nu \frac{ f(\hat{v})}{\hat{v}_x} 
\left( \frac b\nu \frac{ f(\hat{v})}{\hat{v}_x} - \frac c\nu\right) 
$$ 
and $\lim_{x\to\pm\infty} \frac b\nu \frac{ f(\hat{v})}{\hat{v}_x} 
\left( \frac b\nu \frac{ f(\hat{v})}{\hat{v}_x} - \frac c\nu\right) > 0$, it remains to prove 
that 
\begin{equation} 
\label{th0:eq2} 
g_2  := \frac 12 f^\prime (\hat{v})\hat{v}_x^2   -  f(\hat{v}) \hat{v}_{xx}  > 0 
\end{equation} 
for $x$ with $0\le \frac b\nu \frac{f(\hat{v})}{\hat{v}_x}(x)\le \frac c\nu$ in order to be able 
to find $\kappa > 0$ satisfying \eqref{Dissipativity1}. In the particular case $c = 0$ this is obvious.

\medskip 
\noindent 
We therefore assume from now on that $c > 0$. Since $\frac{f(\hat{v})}{\hat{v}_x}$ is strictly increasing, 
it follows that for all 
$\alpha\in ]\inf \frac bc \frac{f(\hat{v})}{\hat{v}_x} ,  \sup \frac bc \frac{f(\hat{v})}{\hat{v}_x} [$ there exists a unique $x_\alpha\in\R$ with 
$$ 
\frac{b}{\nu} \frac{f (\hat{v})}{\hat{v} _x} (x_\alpha ) = \alpha \frac c\nu\, . 
$$ 
In particular, $\hat{v}(x_0) = a$ and $\hat{v}(x_1) $ is the unique root of $\hat{v}_{xx}$, that is, $x_1$ is the location of the maximum of $\hat{v}_x$ and 
$x_0 \le x_1$ and both, $f(\hat{v})$, $f^\prime(\hat{v}) \ge 0$ on $[x_0, x_1]$.

\medskip 
\noindent
We will subdivide the proof of \eqref{th0:eq2} into the three cases 
$x\in[x_0  , x_{0.5}\wedge x_\ast ]$, $x\in [x_{0.5}\vee x_\ast , x_1]$ and $x\in [x_{0.5}\wedge x_\ast , x_{0.5}\vee x_\ast ]$.  

\begin{lemma}
\label{th0:lem1} 
$g_2 (x) > 0$ for $x\in [x_0 , x_{0.5}\wedge x_\ast ]$. 
\end{lemma}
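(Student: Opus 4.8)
The goal is to show $g_2(x) = \frac12 f'(\hat v)\hat v_x^2 - f(\hat v)\hat v_{xx} > 0$ on the interval $[x_0, x_{0.5}\wedge x_\ast]$. On this interval we are to the left of $x_{0.5}$, so by definition of $x_{0.5}$ we have $\frac b\nu \frac{f(\hat v)}{\hat v_x} \le \frac12 \frac c\nu$, equivalently $\hat v_{xx} = c\hat v_x/\nu - (b/\nu)f(\hat v) \ge \frac12 \frac c\nu \hat v_x > 0$; we are also to the left of $x_\ast$, so assumption {\bf (A3)} gives $f''(\hat v) > 0$, and since $x \ge x_0$ we have $f(\hat v) \ge 0$, $f'(\hat v) \ge 0$. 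Thus every term of $g_2$ is nonnegative, and the only way $g_2$ could fail to be positive would be at points where $f(\hat v)=0$ forces the first term alone to carry things, or near $x_0$ where $f(\hat v)$ is small. I would try to prove the statement by a differential-inequality / monotonicity argument on $g_2$ itself, mimicking the structure used in the proof of Proposition \ref{prop0}.

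The natural first step is to compute $g_2'$. Using $\hat v_{xx} = \frac c\nu \hat v_x - \frac b\nu f(\hat v)$ and $\hat v_{xxx} = \frac c\nu \hat v_{xx} - \frac b\nu f'(\hat v)\hat v_x$, one finds (collecting terms)
\[
g_2' = \frac12 f''(\hat v)\hat v_x^3 + f'(\hat v)\hat v_x\hat v_{xx} - f'(\hat v)\hat v_x\hat v_{xx} - f(\hat v)\hat v_{xxx}
     = \frac12 f''(\hat v)\hat v_x^3 - f(\hat v)\hat v_{xxx},
\]
and then substituting $\hat v_{xxx}$ gives $g_2' = \frac12 f''(\hat v)\hat v_x^3 - f(\hat v)\big(\frac c\nu\hat v_{xx} - \frac b\nu f'(\hat v)\hat v_x\big)$. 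On $[x_0, x_{0.5}\wedge x_\ast]$ we have $f''(\hat v) > 0$, $\hat v_x > 0$, $f(\hat v)\ge 0$, $f'(\hat v)\ge 0$, and $\hat v_{xx} \le \frac c\nu \hat v_x$ (this last from $f(\hat v)\ge 0$); hence $f(\hat v)\big(\frac c\nu \hat v_{xx} - \frac b\nu f'(\hat v)\hat v_x\big) \le \frac{c^2}{\nu^2} f(\hat v)\hat v_x^2$, and more to the point it is bounded, so $g_2'$ is controlled from below by $\frac12 f''(\hat v)\hat v_x^3$ minus a manageable term. The cleanest route: evaluate $g_2$ at the left endpoint $x_0$, where $f(\hat v(x_0)) = f(a) = 0$, so $g_2(x_0) = \frac12 f'(a)\hat v_x(x_0)^2 > 0$ by {\bf (A1)} (recall $f'(a) > 0$) and $\hat v_x(x_0) > 0$. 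Then it suffices to show $g_2$ cannot reach $0$ on the interval: if $g_2(\bar x) = 0$ for some first such $\bar x > x_0$, then at $\bar x$ we have $f(\hat v(\bar x))\hat v_{xx}(\bar x) = \frac12 f'(\hat v(\bar x))\hat v_x(\bar x)^2$, so $f(\hat v(\bar x)) > 0$ (as $f'(\hat v)\hat v_x^2 > 0$ whenever $f'(\hat v) > 0$; the case $f'(\hat v(\bar x))=0$ would force $f(\hat v(\bar x))=0$ too, i.e. $\hat v(\bar x)\in\{0,a,1\}$, impossible for $\bar x\in(x_0,x_\ast)$), and then I evaluate $g_2'(\bar x) = \frac12 f''(\hat v(\bar x))\hat v_x(\bar x)^3 - f(\hat v(\bar x))\hat v_{xxx}(\bar x)$ and use the relation $f(\hat v(\bar x)) = \frac{f'(\hat v(\bar x))\hat v_x(\bar x)^2}{2\hat v_{xx}(\bar x)}$ to substitute and check the sign.

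The main obstacle I anticipate is exactly this sign check at a putative zero $\bar x$: after substituting $f(\hat v(\bar x)) = \frac{f'(\hat v)\hat v_x^2}{2\hat v_{xx}}$ into $g_2'(\bar x)$ and using $\hat v_{xxx} = \frac c\nu\hat v_{xx} - \frac b\nu f'(\hat v)\hat v_x$, one gets
\[
g_2'(\bar x) = \frac12 f''(\hat v)\hat v_x^3 - \frac{f'(\hat v)\hat v_x^2}{2\hat v_{xx}}\Big(\frac c\nu\hat v_{xx} - \frac b\nu f'(\hat v)\hat v_x\Big)
            = \frac12 f''(\hat v)\hat v_x^3 - \frac c{2\nu} f'(\hat v)\hat v_x^2 + \frac b{2\nu}\frac{f'(\hat v)^2\hat v_x^3}{\hat v_{xx}},
\]
and I must show this is $> 0$ (which would contradict $\bar x$ being a first zero approached from positive values, forcing $g_2'(\bar x)\le 0$). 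The term $f''(\hat v)\hat v_x^3 > 0$ and $\frac b{2\nu}\frac{f'(\hat v)^2\hat v_x^3}{\hat v_{xx}} > 0$ are favorable; the obstruction is the negative middle term $-\frac c{2\nu} f'(\hat v)\hat v_x^2$. To dominate it I would use that on $[x_0, x_{0.5}]$ we have $\hat v_{xx} \ge \frac c{2\nu}\hat v_x$ (equivalently $\frac b\nu\frac{f(\hat v)}{\hat v_x}\le\frac c{2\nu}$, which is precisely the defining inequality of the region $x\le x_{0.5}$), so $\frac b{2\nu}\frac{f'(\hat v)^2\hat v_x^3}{\hat v_{xx}} \le \frac b\nu \frac{f'(\hat v)^2\hat v_x^2}{c}\cdot\hat v_x$ — wait, that goes the wrong way; instead I use $\hat v_{xx}\le \frac c\nu\hat v_x$ to get $\frac b{2\nu}\frac{f'(\hat v)^2\hat v_x^3}{\hat v_{xx}} \ge \frac b{2c}f'(\hat v)^2\hat v_x^2$, and then it remains to show $\frac12 f''(\hat v)\hat v_x + \frac b{2c}f'(\hat v)^2 \ge \frac c{2\nu}f'(\hat v)$ after dividing by $\hat v_x^2$, i.e. $\nu b f''(\hat v)\hat v_x + \nu b f'(\hat v)^2 \ge c^2 f'(\hat v)$, which should follow by relating $\hat v_x$ to the travelling-wave ODE and using the convexity input $f''>0$ together with the explicit bounds $\gamma_\pm$ from Proposition \ref{prop0}; pinning down this elementary but fiddly inequality is where the real work lies, and I would expect the author to exploit the specific structure of the region more carefully (perhaps splitting further or using $f(\hat v)/\hat v_x$ as the working variable) rather than the crude bound I sketched.
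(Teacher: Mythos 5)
Your skeleton agrees with the paper's: observe $g_2(x_0)=\tfrac12 f'(a)\hat v_x^2(x_0)>0$, let $\bar x$ be the first zero of $g_2$ in the interval, and derive a contradiction; your identity $g_2'=\tfrac12 f''(\hat v)\hat v_x^3-f(\hat v)\hat v_{xxx}$ is also correct. The gap is in the mechanism of the contradiction, which you leave open and which in fact cannot be closed the way you propose. You want $g_2'(\bar x)>0$ from the data at the single point $\bar x$. Substituting the zero condition $f(\hat v)\hat v_{xx}=\tfrac12 f'(\hat v)\hat v_x^2$ and $\hat v_{xxx}=\tfrac c\nu\hat v_{xx}-\tfrac b\nu f'(\hat v)\hat v_x$ into your expression gives exactly
\[
g_2'(\bar x)=\tfrac12 f''(\hat v)\hat v_x^3-\tfrac12 f'(\hat v)\hat v_x^2\Bigl(\tfrac c\nu-2\tfrac b\nu\tfrac{f(\hat v)}{\hat v_x}\Bigr)(\bar x)\,,
\]
so you need $f''(\hat v)\hat v_x>f'(\hat v)\bigl(\tfrac c\nu-2\tfrac b\nu\tfrac{f(\hat v)}{\hat v_x}\bigr)$ at $\bar x$. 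This is not a consequence of {\bf (A1)}--{\bf (A3)}: on $[x_0,x_{0.5}\wedge x_\ast]$ the right-hand side stays bounded below by $f'(a)\bigl(\tfrac c\nu-2\tfrac b\nu\tfrac{f(\hat v)}{\hat v_x}(x)\bigr)$, which is a fixed positive quantity for $x$ near $x_\ast$ whenever $x_\ast<x_{0.5}$, while the left-hand side tends to $0$ there because $f''(v_\ast)=0$. Nothing in the hypotheses prevents a putative first zero from lying in that region, so the purely local sign check is structurally insufficient; your fallback bounds ($\hat v_{xx}\le\tfrac c\nu\hat v_x$, the constants $\gamma_\pm$) only rescale the offending term and do not repair this. (There is also a harmless typo: multiplying your reduced inequality by $\nu c$ gives $\nu c\,f''(\hat v)\hat v_x+\nu b\,f'(\hat v)^2\ge c^2 f'(\hat v)$, not $\nu b\,f''(\hat v)\hat v_x+\dots$.)

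The paper's argument is essentially nonlocal at precisely this point. Rather than evaluating $g_2'$ at $\bar x$, it integrates $\frac d{dx}\bigl(f(\hat v)\hat v_{xx}\hat v_x^{m}\bigr)$ over $[x_0,\bar x]$ with $e^{\frac c\nu(\bar x-s)}$ as integrating factor, replaces $f'(\hat v)(s)$ by $f'(\hat v)(\bar x)$ under the integral using that $f'(\hat v)$ is \emph{increasing on the whole interval} (this is how the convexity $f''>0$ enters --- as a monotonicity statement, not a pointwise lower bound on $f''$), integrates by parts, and finally chooses the exponent $m$ so large that the remaining unfavourable term $\tfrac12 f'(\hat v)(\bar x)\int_{x_0}^{\bar x}e^{\frac c\nu(\bar x-s)}\bigl(\tfrac c{2\nu}-\tfrac b\nu\tfrac{f(\hat v)}{\hat v_x}\bigr)\hat v_x^{m+2}\,ds$ is dominated by $m\int_{x_0}^{\bar x}e^{\frac c\nu(\bar x-s)}\bigl(f'(\hat v)(\bar x)-f'(\hat v)(s)\bigr)\hat v_{xx}\hat v_x^{m+1}\,ds$. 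Some such global ingredient --- the free large parameter $m$ together with the accumulated monotonicity of $f'(\hat v)$ over $[x_0,\bar x]$ --- is what your sketch is missing, and it is the entire content of the lemma.
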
 

\begin{proof} 
We may suppose that $x_\ast \ge x_0$, because otherwise, the interval is empty. Let 
$$ 
\bar{x} := \inf \{ x\ge x_0 \mid g_2 (x) = 0\} \, . 
$$ 
We will show that $\bar{x} > x_{0.5}\wedge x_\ast$. Since $g_2 (x_0) = \frac 12 f^\prime (a)\hat{v}_x^2 (x_0) > 0$ we certainly have that $\bar{x} > x_0$. Suppose now that $\bar{x}\le x_{0.5}\wedge x_\ast$. Then for all $m\in\N$ 
$$ 
\begin{aligned} 
\frac d{dx} f(\hat{v})\hat{v}_{xx} \hat{v}_x^m 
& = f^\prime (\hat{v})  \hat{v}_{xx} \hat{v}_x^{1+m} + f (\hat{v}) \left( \hat{v}_{xxx} 
+ m\frac{\hat{v}_{xx}^2}{\hat{v}_x} \right) \hat{v}_x^{m} \\ 
& = \frac c{\nu}   f(\hat{v}) \hat{v}_{xx}  \hat{v}_x^{m}  +  f^\prime (\hat{v}) \left( \frac c\nu - 2\frac b\nu \frac{f(\hat{v})}{\hat{v}_x}   \right) \hat{v}_x^{m+2}  \\
& \qquad    + mf (\hat{v})\hat{v}^2_{xx} \hat{v}_x^{m-1} 
\end{aligned} 
$$ 
which implies 
\begin{equation}
\label{th0:lem1-1} 
\begin{aligned} 
f(\hat{v})\hat{v}_{xx}\hat{v}_x^{m} (\bar{x})  
& = \int_{x_0}^{\bar{x}}  e^{\frac c\nu (\bar{x}-s)} f^\prime (\hat{v}) \left( \frac c\nu - 2\frac b\nu \frac{f(\hat{v})}{\hat{v}_x} \right) \hat{v}_x^{m+2}  \, ds \\ 
& \qquad  + m\int_{x_0}^{\bar{x}}  e^{\frac c\nu (\bar{x}-s)} f(\hat{v}) \hat{v}^2_{xx}  \hat{v}_x^{m-1} \, ds \\ 
& =:  I + II\, , \quad\mbox{say.}
\end{aligned} 
\end{equation} 
Now $f^{(2)}(\hat{v})\ge 0$, hence $f^\prime (\hat{v})$ increasing, and $\frac c\nu - 2\frac b\nu \frac{f(\hat{v})}{\hat{v}_x} \ge 0$ due to    
$x\le x_{0.5}\wedge x_\ast$, implies that 
$$
\begin{aligned} 
I & \le f^\prime(\hat{v}) (\bar{x})  \int_{x_0}^{\bar{x}}  
e^{\frac c\nu (\bar{x}-s)} \left( \frac c\nu - 2\frac b\nu \frac{f(\hat{v})}{\hat{v}_x} \right) \hat{v}_x^{m+2} \, ds \\ 
& = \frac 12  f^\prime (\hat{v}) (\bar{x}) \left( \hat{v}_x^{m+2} (\bar{x}) - e^{\frac c\nu (\bar{x}-x_0)}\hat{v}_x^{m+2} (x_0) 
- m \int_{x_0}^{\bar{x}} e^{\frac c\nu (\bar{x}-s)} \hat{v}_{xx}\hat{v}_x^{m+1} \, ds \right) \\
& \quad + \frac 12  f^\prime (\hat{v}) (\bar{x}) \int_{x_0}^{\bar{x}} e^{\frac c\nu (\bar{x}-s)} \left( \frac c{2\nu} - \frac b\nu \frac{f(\hat{v})}{\hat{v}_x} \right) \hat{v}_x^{m+2} \, ds  
\end{aligned} 
$$ 
thereby using $e^{\frac c\nu (\bar{x}-s)}\left( \frac c\nu \hat{v}_x 
- 2\frac b\nu f(\hat{v})\right) \hat{v}_x  = \frac d{ds} e^{\frac c\nu (\bar{x}-s)}\hat{v}_x^2$. Inserting the last estimate into \eqref{th0:lem1-1} and using $g_2 (s) \ge 0$ for $s\le \bar{x}$, hence 
$$ 
II \le \frac m2 \int_{x_0}^{\bar{x}} e^{\frac c\nu (\bar{x}-s)} 
f^\prime(\hat{v}) \hat{v}_{xx} \hat{v}_x^{m+1}\, ds\, ,  
$$ 
we arrive at  
$$ 
\begin{aligned} 
f(\hat{v})\hat{v}_{xx}\hat{v}_x^{m} (\bar{x})  
& < \frac 12  f^\prime (\hat{v})\hat{v}_x^{m+2} (\bar{x}) \\
& \qquad - \frac m2   \int_{x_0}^{\bar{x}} e^{\frac c\nu (\bar{x}-s)} \left( f^\prime(\hat{v}) (\bar{x}) - f^\prime (\hat{v}) (s)\right)  
\hat{v}_{xx}\hat{v}_x^{m+1} \, ds \\
&  \qquad + \frac 12 f^\prime (\hat{v}) (\bar{x})  \int_{x_0}^{\bar{x}}  e^{\frac c\nu (\bar{x}-s)} \left( \frac c{2\nu} - \frac b\nu \frac{f(\hat{v})}{\hat{v}_x}\right)\hat{v}_x^{m+2} \, ds  \, . 
\end{aligned} 
$$ 
We can now choose $m$ sufficiently large such that 
$$ 
\begin{aligned} 
f^\prime (\hat{v}) & (\bar{x})  \int_{x_0}^{\bar{x}}  e^{\frac c\nu (\bar{x}-s)} \left( \frac c{2\nu} - \frac b\nu \frac{f(\hat{v})}{\hat{v}_x} \right)\hat{v}_x^{m+2} \, ds \\  
& < m \int_{x_0}^{\bar{x}} e^{\frac c\nu (\bar{x}-s)} \left( f^\prime(\hat{v}) (\bar{x}) - f^\prime (\hat{v}) (s)\right)  
\hat{v}_{xx}\hat{v}_x^{m+1} \, ds 
\end{aligned} 
$$ 
since $f^\prime (\hat{v}) (\bar{x}) - f^\prime (\hat{v}) (s) > 0$ for $s < \bar{x}$. It follows that $f(\hat{v})\hat{v}_{xx}\hat{v}_x^{m} (\bar{x}) <  \frac 12 f^\prime(\hat{v})\hat{v}_x^{m+2}(\bar{x})$, which is a contradiction to the definition of $\bar{x}$. It follows that $\bar{x} > x_{0.5}\wedge x_\ast$ and thus $g_2 (x) > 0$ on $[x_0 , x_{0.5}\wedge x_\ast]$. 
\end{proof}

\medskip 
\noindent 
We now turn to the second subinterval $[x_{0.5}\vee x_\ast , x_1]$ where $f^\prime (\hat{v})$ decreases. 

\begin{lemma}
\label{th0:lem2} 
$g_2 (x) > 0$ for $x\in [x_{0.5} \vee x_\ast , x_1]$. 
\end{lemma}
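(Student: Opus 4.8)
The plan is to run the contradiction argument of Lemma~\ref{th0:lem1} once more, but anchored at the \emph{right} endpoint $x_1$ rather than at $x_0$. Two observations fix the setup. On $[x_{0.5}\vee x_\ast,x_1]$ the sign pattern is the mirror image of the one used in Lemma~\ref{th0:lem1}: $f(\hat{v})\ge 0$, $\hat{v}_x>0$, $\hat{v}_{xx}\ge 0$ (since $x\le x_1$), $f'(\hat{v})\ge 0$ with $f'(\hat{v})$ \emph{decreasing} (since $x\ge x_\ast$, so $f''(\hat{v})\le 0$), $\frac{c}{\nu}-2\frac{b}{\nu}\frac{f(\hat{v})}{\hat{v}_x}\le 0$ (since $x\ge x_{0.5}$), and $\frac{f(\hat{v})}{\hat{v}_x}$ strictly increasing by Proposition~\ref{prop0}(i). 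Moreover, at $x_1$ one has $\hat{v}_{xx}(x_1)=0$ and $\frac{d}{dx}\frac{f(\hat{v})}{\hat{v}_x}(x_1)=f'(\hat{v}(x_1))>0$ by Proposition~\ref{prop0}(i), hence $g_2(x_1)=\frac12 f'(\hat{v}(x_1))\hat{v}_x^2(x_1)>0$; equivalently, since $g_2=\hat{v}_x^2\big(\frac{d}{dx}\frac{f(\hat{v})}{\hat{v}_x}-\frac12 f'(\hat{v})\big)$, the claim is that $\frac{d}{dx}\frac{f(\hat{v})}{\hat{v}_x}>\frac12 f'(\hat{v})$ throughout the interval, which holds at $x_1$. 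I would then argue by contradiction: if $g_2$ vanishes somewhere in $[x_{0.5}\vee x_\ast,x_1)$, let $\bar x$ be the \emph{largest} such point; by continuity and $g_2(x_1)>0$ we have $\bar x<x_1$, $g_2(\bar x)=0$, and $g_2>0$ on $(\bar x,x_1]$, i.e.\ $f(\hat{v})\hat{v}_{xx}\le\frac12 f'(\hat{v})\hat{v}_x^2$ on $[\bar x,x_1]$ with equality at $\bar x$.

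The engine is again the identity
\[
\frac{d}{dx}\Big(e^{-\frac{c}{\nu}x}f(\hat{v})\hat{v}_{xx}\hat{v}_x^{k}\Big)=e^{-\frac{c}{\nu}x}\Big(f'(\hat{v})\big(\tfrac{c}{\nu}-2\tfrac{b}{\nu}\tfrac{f(\hat{v})}{\hat{v}_x}\big)\hat{v}_x^{k+2}+k\,f(\hat{v})\hat{v}_{xx}^{2}\hat{v}_x^{k-1}\Big),
\]
integrated from $\bar x$ to $x_1$ using $f(\hat{v})\hat{v}_{xx}\hat{v}_x^{k}(x_1)=0$. Since on $[\bar x,x_1]$ the weight $\hat{v}_x$ increases \emph{towards} $x_1$, I would take the negative power $k=-m$, $m\in\N$, so that $\hat{v}_x^{k}$ still concentrates the integral near the free endpoint $\bar x$ (this replaces the choice $k=+m$ of Lemma~\ref{th0:lem1}). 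Then, exactly as there: I bound the $\big(\frac{c}{\nu}-2\frac{b}{\nu}\frac{f(\hat{v})}{\hat{v}_x}\big)$-term using $f'(\hat{v})$ decreasing ($f'(\hat{v}(s))\le f'(\hat{v}(\bar x))$ for $s\ge\bar x$) and then integrate by parts via $e^{\frac{c}{\nu}(\bar x-s)}\big(\tfrac{c}{\nu}-2\tfrac{b}{\nu}\tfrac{f(\hat{v})}{\hat{v}_x}\big)\hat{v}_x^{2}=\frac{d}{ds}\big(e^{\frac{c}{\nu}(\bar x-s)}\hat{v}_x^{2}\big)$ to split off the main term $\frac12 f'(\hat{v}(\bar x))\hat{v}_x^{2-m}(\bar x)$; I bound the $f(\hat{v})\hat{v}_{xx}^2$-term by $\frac m2\int_{\bar x}^{x_1}e^{\frac{c}{\nu}(\bar x-s)}f'(\hat{v})\hat{v}_{xx}\hat{v}_x^{1-m}\,ds$ using $f(\hat{v})\hat{v}_{xx}\le\frac12 f'(\hat{v})\hat{v}_x^2$ on $[\bar x,x_1]$; and the two $m$-weighted pieces merge into $-\frac m2\int_{\bar x}^{x_1}e^{\frac{c}{\nu}(\bar x-s)}\big(f'(\hat{v}(\bar x))-f'(\hat{v}(s))\big)\hat{v}_{xx}\hat{v}_x^{1-m}\,ds\le 0$ (integrand $\ge 0$ since $f'(\hat{v})$ is decreasing and $\hat{v}_{xx}\ge 0$ on $[\bar x,x_1]$). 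Together with the favourably-signed boundary contribution at $x_1$, this forces, for $m$ large, $f(\hat{v})\hat{v}_{xx}\hat{v}_x^{-m}(\bar x)<\frac12 f'(\hat{v}(\bar x))\hat{v}_x^{2-m}(\bar x)$, contradicting the equality at $\bar x$; hence $g_2>0$ on $[x_{0.5}\vee x_\ast,x_1]$.

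The step I expect to be the main obstacle is the one already present in Lemma~\ref{th0:lem1}: after the integration by parts the estimate reduces to a competition between the remaining positive ``leftover'' $f'(\hat{v}(\bar x))\int_{\bar x}^{x_1}e^{\frac{c}{\nu}(\bar x-s)}\big(\tfrac{b}{\nu}\tfrac{f(\hat{v})}{\hat{v}_x}-\tfrac{c}{2\nu}\big)\hat{v}_x^{2-m}\,ds\ge 0$ and the negative $m$-weighted correction $\frac m2\int_{\bar x}^{x_1}e^{\frac{c}{\nu}(\bar x-s)}\big(f'(\hat{v}(\bar x))-f'(\hat{v}(s))\big)\hat{v}_{xx}\hat{v}_x^{1-m}\,ds\ge 0$, and one has to show the latter dominates for $m$ large — this is exactly the quantitative heart of Lemma~\ref{th0:lem1}, now with all monotonicities reflected, and it uses crucially that $f'(\hat{v}(\bar x))-f'(\hat{v}(s))>0$ strictly for $s>\bar x$ together with the strict inequality $g_2>0$ on the open part $(\bar x,x_1]$ to gain the needed margin over the equality at $\bar x$. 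A minor additional wrinkle compared with Lemma~\ref{th0:lem1} is merely bookkeeping: the natural weight there, $\hat{v}_x^{m}$, already concentrates at the free endpoint $\bar x$, whereas here the free endpoint lies to the \emph{left} of the interval of integration, so the exponent must be flipped to $\hat{v}_x^{-m}$ for the same localization to occur; once that is done the sign analysis is identical to that of Lemma~\ref{th0:lem1}.
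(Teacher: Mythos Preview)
Your proposal is correct and follows essentially the same approach as the paper's own proof: define $\bar x$ as the largest zero of $g_2$ in the interval, use the identity for $\frac{d}{dx}\big(f(\hat v)\hat v_{xx}\hat v_x^{-m}\big)$ (you absorb the integrating factor $e^{-\frac c\nu x}$ into the derivative, the paper leaves it as a $\frac c\nu$-term and then applies variation of constants --- these are equivalent), integrate from $\bar x$ to $x_1$ using $\hat v_{xx}(x_1)=0$, bound term~I via $f'(\hat v)$ decreasing together with the integration-by-parts identity $e^{\frac c\nu(\bar x-s)}\big(2\frac b\nu f(\hat v)-\frac c\nu\hat v_x\big)\hat v_x=-\frac d{ds}\big(e^{\frac c\nu(\bar x-s)}\hat v_x^2\big)$, bound term~II via $g_2\ge 0$ on $[\bar x,x_1]$, and then choose $m$ large so that the $m$-weighted correction dominates the leftover. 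Your identification of the ``main obstacle'' and the reason for flipping the exponent to $-m$ are both exactly what the paper does.
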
 

\begin{proof} 
We may assume that $x_\ast \le x_1$. Otherwise the interval $[x_0 \vee x_\ast  , x_1]$ is empty. Let 
$$ 
\bar{x} := \sup \{ x\in [x_{0.5}\vee x_\ast , x_1] \mid g_2 (x) = 0\} \, . 
$$ 
In this case we will show that $\bar{x} < x_{0.5}\vee x_\ast$. Since $g_2 (x_1) = \frac 12 f^\prime (v)\hat{v}_x^2 (x_1) > 0$ we certainly have that $\bar{x} < x_1$. 
Suppose now that $\bar{x}\ge x_{0.5}\vee x_\ast$. Then for all $m\in\N$ we have that 
$$ 
\begin{aligned} 
\frac d{dx} f(\hat{v})\hat{v}_{xx} \hat{v}_x^{-m} 
& = f^\prime (\hat{v})  \hat{v}_{xx} \hat{v}_x^{1-m} + f (\hat{v}) \left( \hat{v}_{xxx} 
- m\frac{\hat{v}_{xx}^2}{\hat{v}_x} \right) \hat{v}_x^{-m} \\ 
& = \frac c{\nu}   f(\hat{v}) \hat{v}_{xx}  \hat{v}_x^{-m}  -  f^\prime (\hat{v}) \left( 2\frac b\nu f(\hat{v}) - \frac c\nu \hat{v}_x \right) \hat{v}_x^{1-m}  \\
& \qquad    - mf (\hat{v})\hat{v}^2_{xx} \hat{v}_x^{-(m+1)} 
\end{aligned} 
$$ 
which implies 
\begin{equation}
\label{th0:lem2-1} 
\begin{aligned} 
f(\hat{v})\hat{v}_{xx}\hat{v}_x^{-m} (\bar{x})  
& = \int_{\bar{x}}^{x_1}  e^{\frac c\nu (\bar{x}-s)} f^\prime (\hat{v}) \left( 2\frac b\nu f(\hat{v}) - \frac c\nu \hat{v}_x \right) \hat{v}_x^{1-m} \, ds \\
& \qquad  + m\int_{\bar{x}}^{x_1}  e^{\frac c\nu (\bar{x}-s)} f(\hat{v}) \hat{v}^2_{xx}  \hat{v}_x^{-(m+1)} \, ds \\ 
& =:  I + II\, , \quad\mbox{say.}
\end{aligned} 
\end{equation} 
Now $f^{(2)}(\hat{v})\le 0$, hence $f^\prime (\hat{v})$ decreasing, and $2\frac b\nu f(\hat{v}) - \frac c\nu \hat{v}_x \ge 0$ due to $x\ge x_{0.5}\vee x_\ast$, implies that 
$$
\begin{aligned} 
I & \le f^\prime(\hat{v}) (\bar{x})  \int_{\bar{x}}^{x_1}  e^{\frac c\nu (\bar{x}-s)} \left( 2\frac b\nu \frac{f(\hat{v})}{\hat{v}_x} - \frac c\nu \right) \hat{v}_x^{2-m} \, ds \\ 
& = \frac 12  f^\prime (\hat{v}) (\bar{x}) \left( \hat{v}_x^{2-m} (\bar{x}) - e^{\frac c\nu (\bar{x}-x_1)}\hat{v}_x^{2-m} (x_1) 
- m \int_{\bar{x}}^{x_1}  e^{\frac c\nu (\bar{x}-s)} \hat{v}_{xx}\hat{v}_x^{1-m} \, ds \right) \\
& \quad + \frac 12  f^\prime (\hat{v}) (\bar{x}) \int_{\bar{x}}^{x_1} e^{\frac c\nu (\bar{x}-s)} \left( \frac b\nu \frac{f(\hat{v})}{\hat{v}_x} - \frac c{2\nu} \right) \hat{v}_x^{2-m} \, ds  \\
\end{aligned} 
$$ 
thereby using $e^{\frac c\nu (\bar{x}-s)}\left( 2\frac b\nu f(\hat{v}) - \frac c\nu \hat{v}_x \right) \hat{v}_x  = - \frac d{ds} e^{\frac c\nu (\bar{x}-s)}\hat{v}_x^2$. Inserting the last estimate into \eqref{th0:lem2-1} and using $g_2 (s) \ge 0$ for $s\ge \bar{x}$, hence 
$$ 
II \le \frac m2 \int_{\bar{x}}^{x_1} e^{\frac c\nu (\bar{x}-s)} 
f^\prime(\hat{v}) \hat{v}_{xx} \hat{v}_x^{1-m}\, ds\, ,  
$$ 
we arrive at  
$$ 
\begin{aligned} 
f(\hat{v})\hat{v}_{xx}\hat{v}_x^{-m} (\bar{x})  
& < \frac 12  f^\prime (\hat{v})\hat{v}_x^{2-m} (\bar{x}) \\
& \qquad - \frac m2   \int_{\bar{x}}^{x_1}  e^{\frac c\nu (\bar{x}-s)} \left( f^\prime(\hat{v}) (\bar{x}) - f^\prime (\hat{v}) (s)\right)  
\hat{v}_{xx}\hat{v}_x^{1-m} \, ds \\
&  \qquad + \frac 12 f^\prime (\hat{v}) (\bar{x})  \int_{\bar{x}}^{x_1}  e^{\frac c\nu (\bar{x}-s)} \left( \frac b\nu \frac{f(\hat{v})}{\hat{v}_x}   - \frac c{2\nu}\right)\hat{v}_x^{2-m} \, ds  \, . 
\end{aligned} 
$$ 
We can now choose $m$ sufficiently large such that 
$$ 
\begin{aligned} 
f^\prime (\hat{v}) & (\bar{x})  \int_{\bar{x}}^{x_1}  e^{\frac c\nu (\bar{x}-s)} \left( \frac b\nu \frac{f(\hat{v})}{\hat{v}_x} - \frac c{2\nu}\right)\hat{v}_x^{2-m} \, ds \\  
& < m \int_{\bar{x}}^{x_1}  e^{\frac c\nu (\bar{x}-s)} \left( f^\prime(\hat{v}) (\bar{x}) - f^\prime (\hat{v}) (s)\right)  
\hat{v}_{xx}\hat{v}_x^{1-m} \, ds 
\end{aligned} 
$$ 
since $f^\prime (\hat{v}) (\bar{x}) - f^\prime (\hat{v}) (s) > 0$ for $s > \bar{x}$. It follows that $f(\hat{v})\hat{v}_{xx}\hat{v}_x^{-m} (\bar{x}) <  \frac 12 f^\prime(\hat{v})\hat{v}_x^{2-m}(\bar{x})$, which is a contradiction to the definition of $\bar{x}$. It follows that $\bar{x} < x_{0.5}\vee x_\ast$ and thus $g_2 (x) > 0$ on $[x_{0.5}\vee x_\ast , x_1 \vee x_\ast]$. 
\end{proof}

\noindent 
Finally we consider the third subinterval $[x_{0.5}\wedge x_\ast , x_{0.5}\vee x_\ast]$. 

\begin{lemma}
\label{th0:lem3} 
$g_2 (x) > 0$ for $x\in [x_{0.5}\wedge x_\ast , x_{0.5}\vee x_\ast]$. 
\end{lemma}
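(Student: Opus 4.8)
\medskip
\noindent
The plan is to replace $g_2$ by the function
$$
q \;:=\; \frac b\nu f'(\hat v)+2\rho^2-2\tfrac c\nu\rho\,,\qquad \rho\;:=\;\frac{\hat v_{xx}}{\hat v_x}\;=\;\frac c\nu-\frac b\nu\frac{f(\hat v)}{\hat v_x}\,,
$$
which is precisely the left-hand side of \eqref{Dissipativity1} and is related to $g_2$ by $q=\tfrac{2b}{\nu\hat v_x^2}\,g_2$ (a direct computation from $c\hat v_x=\nu\hat v_{xx}+bf(\hat v)$); in particular, $g_2>0$ on an interval iff $q>0$ there, and $q$ is continuously differentiable since $\hat v_x>0$. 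The reason for isolating this third subinterval is that here positivity of $q$ is \emph{already known at both endpoints}: $q(x_{0.5}\wedge x_\ast)>0$ by Lemma \ref{th0:lem1} and $q(x_{0.5}\vee x_\ast)>0$ by Lemma \ref{th0:lem2}. Hence it suffices to prove that $q$ has no critical point in the open interval $(x_{0.5}\wedge x_\ast,x_{0.5}\vee x_\ast)$: then $q'$, being continuous, keeps a fixed sign there, $q$ is strictly monotone on $[x_{0.5}\wedge x_\ast,x_{0.5}\vee x_\ast]$, and since it is positive at both endpoints it is positive throughout, giving $g_2=\tfrac{\nu\hat v_x^2}{2b}q>0$. (If $x_{0.5}=x_\ast$ the interval is a single point and the claim is part of Lemma \ref{th0:lem1}.)

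\medskip
\noindent
The computation at the core of the argument is
$$
q' \;=\; \frac b\nu f''(\hat v)\,\hat v_x+4\rho\rho'-2\tfrac c\nu\rho' \;=\; \frac b\nu f''(\hat v)\,\hat v_x+2\bigl(2\rho-\tfrac c\nu\bigr)\rho'\,.
$$
Suppose $\bar x\in(x_{0.5}\wedge x_\ast,x_{0.5}\vee x_\ast)$ satisfies $q'(\bar x)=0$. Since $\hat v_x$ is strictly log-concave (Proposition \ref{prop0}(i)) we have $\rho'(\bar x)<0$, and $\hat v_x(\bar x)>0$, $b,\nu>0$; therefore $f''(\hat v(\bar x))$ and $2\rho(\bar x)-\tfrac c\nu$ have the same sign, and both are nonzero because $\bar x\neq x_{0.5}$ and $\bar x\neq x_\ast$ (the endpoints of the interval). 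But by Proposition \ref{prop0}(i) the function $2\rho-\tfrac c\nu=2\bigl(\tfrac c{2\nu}-\tfrac b\nu\tfrac{f(\hat v)}{\hat v_x}\bigr)$ is strictly decreasing and vanishes only at $x_{0.5}$, so it is positive on $(-\infty,x_{0.5})$ and negative on $(x_{0.5},\infty)$; whereas by {\bf (A3)}, $f''(\hat v)$ vanishes only at $x_\ast$ and is positive on $(-\infty,x_\ast)$ and negative on $(x_\ast,\infty)$. Since $\bar x$ lies strictly between $x_{0.5}$ and $x_\ast$, the two quantities $2\rho(\bar x)-\tfrac c\nu$ and $f''(\hat v(\bar x))$ have \emph{opposite} signs --- a contradiction. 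Thus $q$ has no critical point in the open interval, and, as explained above, $g_2>0$ on $[x_{0.5}\wedge x_\ast,x_{0.5}\vee x_\ast]$.

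\medskip
\noindent
The only point calling for care is the choice of $q$ over $g_2$. A direct minimum-principle argument applied to $g_2$ itself closes the case $x_\ast<x_{0.5}$: at an interior minimum $\bar x$ of $g_2$ one has $g_2'(\bar x)=0$, i.e. $\hat v_{xxx}(\bar x)=\tfrac{f''(\hat v(\bar x))}{2f(\hat v(\bar x))}\hat v_x^3(\bar x)\le 0$, hence (using $\hat v_{xxx}=\tfrac c\nu\hat v_{xx}-\tfrac b\nu f'(\hat v)\hat v_x$) $\tfrac c\nu\rho(\bar x)\le\tfrac b\nu f'(\hat v(\bar x))$, which together with $g_2(\bar x)\le 0$, i.e. $\tfrac b\nu f'(\hat v(\bar x))\le 2\tfrac b\nu\tfrac{f(\hat v(\bar x))}{\hat v_x(\bar x)}\rho(\bar x)$, and $\rho(\bar x)>0$, forces $\tfrac b\nu\tfrac{f(\hat v(\bar x))}{\hat v_x(\bar x)}\ge\tfrac c{2\nu}$, i.e. $\bar x\ge x_{0.5}$, a contradiction. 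This variant however breaks down when $x_\ast>x_{0.5}$, an ordering which genuinely occurs (e.g. for the Nagumo equation with $a<\tfrac12$, cf. Example \ref{ExampleNagumo}); computing $q'$ as above treats both orderings of $x_{0.5}$ and $x_\ast$ in one stroke, and the verification of $q=\tfrac{2b}{\nu\hat v_x^2}g_2$ and of the formula for $q'$ is routine.
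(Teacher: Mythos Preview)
Your proof is correct and is essentially the paper's own argument: both work with the quantity $q=\tfrac{2b}{\nu}\,g_2/\hat v_x^2$ and show it is monotone on $[x_{0.5}\wedge x_\ast,x_{0.5}\vee x_\ast]$ by a sign analysis of $q'=\tfrac{b}{\nu}f''(\hat v)\hat v_x+2(2\rho-\tfrac c\nu)\rho'$, then invoke positivity at an endpoint from Lemmas~\ref{th0:lem1}--\ref{th0:lem2}. The paper splits into the two cases $x_\ast\lessgtr x_{0.5}$, reads off the direction of monotonicity directly, and uses only the endpoint $x_{0.5}$, whereas you phrase it as a no-critical-point argument and appeal to both endpoints; these are cosmetic differences.
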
 

\begin{proof} 
We consider the two cases $x_\ast \le x_{0.5}$ and $x_{0.5} > x_\ast$ separately. 

\medskip 
\noindent 
{\bf Case 1:} $x_\ast \le x_{0.5}$, hence 
$[x_{0.5}\wedge x_\ast , x_{0.5}\vee x_\ast] = [x_\ast , x_{0.5}]$.

\medskip 
\noindent 
In this case $f^\prime (\hat{v})$ is decreasing and 
$\frac{f(\hat{v})}{\hat{v}_x}\frac{\hat{v}_{xx}}{\hat{v}_x}$ increases, since 
$$ 
\frac d{dx} \frac{f(\hat{v})}{\hat{v}_x}\frac{\hat{v}_{xx}}{\hat{v}_x} = \left( \frac c\nu - 2\frac b\nu  \frac{f(\hat{v})}{\hat{v}_x} \right) \frac  d{dx} \frac{f(\hat{v})}{\hat{v}_x} \ge 0\, . 
$$ 
Hence 
$$ 
\begin{aligned} 
g_2 (x) & = \hat{v}_x^2 (x) \left( \frac 12 f^\prime (\hat{v}) - \frac{f(\hat{v})}{\hat{v}_x}\frac{\hat{v}_{xx}}{\hat{v}_x}\right) (x) \\ 
& \ge \hat{v}_x^2 (x) \left( \frac 12 f^\prime (\hat{v}) - \frac{f(\hat{v})}{\hat{v}_x}\frac{\hat{v}_{xx}}{\hat{v}_x}\right) (x_{0.5}) \\ 
& \ge \frac{\hat{v}_x^2 (x)}{\hat{v}_x^2 (x_{0.5})} g_2 (x_{0.5}) > 0   
\end{aligned} 
$$
according to Lemma \ref{th0:lem2}.

\medskip 
\noindent 
{\bf Case 2:} $x_{0.5} < x_\ast$, hence 
$[x_{0.5}\wedge x_\ast , x_{0.5}\vee x_\ast] = [x_{0.5} , x_\ast]$.

\medskip 
\noindent 
In this case $f^\prime (\hat{v}$ is increasing and 
$\frac{f(\hat{v})}{\hat{v}_x}\frac{\hat{v}_{xx}}{\hat{v}_x}$ decreases, since 
$$ 
\frac d{dx} \frac{f(\hat{v})}{\hat{v}_x}\frac{\hat{v}_{xx}}{\hat{v}_x} = \left( \frac c\nu - 2\frac b\nu  \frac{f(\hat{v})}{\hat{v}_x} \right) \frac  d{dx} \frac{f(\hat{v})}{\hat{v}_x} \le 0\, . 
$$ 
Hence 
$$ 
\begin{aligned} 
g_2 (x) & = \hat{v}_x^2 (x) \left( \frac 12 f^\prime (\hat{v}) - \frac{f(\hat{v})}{\hat{v}_x}\frac{\hat{v}_{xx}}{\hat{v}_x}\right) (x) \\ 
& \ge \hat{v}_x^2 (x) \left( \frac 12 f^\prime (\hat{v}) - \frac{f(\hat{v})}{\hat{v}_x}\frac{\hat{v}_{xx}}{\hat{v}_x}\right) (x_{0.5}) \\ 
& \ge \frac{\hat{v}_x^2 (x)}{\hat{v}_x^2 (x_{0.5})} g_2 (x_{0.5} ) > 0   
\end{aligned} 
$$
according to Lemma \ref{th0:lem1}.  
\end{proof}


\section
{Proof of Theorem \ref{th1}}
\label{Proofth1}

\noindent 
Recall that the travelling wave satisfies the equation $c\hat{v}_x = \nu\hat{v}_{xx} + bf (\hat{v})$, 
hence $c\hat{v}_{xx} = \nu\hat{v}_{xx} + b f ^{\prime} (\hat{v}) \hat{v}_x$. Given a function $u\in C^1_c (\R)$ 
and writing $u = h \hat{v}_x$ it follows that  
$$ 
\nu\Delta u + b f^\prime (\hat{v}) u = \nu h_{xx}  \hat{v}_x + 2\nu\hat{v}_{xx} h_x + c\hat{v}_{xx}h 
$$ 
which implies 
\begin{equation}
\label{eq2_1}
\begin{aligned} 
& -\langle \nu\Delta u + b f^\prime (\hat{v}) u, u \rangle 
  = -\int (\nu h_{xx} + 2\nu\frac{\hat{v}_{xx}}{\hat{v}_x} hx ) \ h \, \hat{v}_x^2 dx - c\int h\hat{v}_{xx} h \hat{v}_x dx \\
& \qquad =  \nu\int h^2_x \hat{v}^2_x \, dx + c\int h_x h \hat{v}^2_x dx \\  
& \qquad =  \nu  \int \left( h e^{\frac c{2\nu}x}\right)^2_x e^{-\frac c\nu x} \hat{v}^2_x\, dx - \nu\left(\frac{c}{2\nu} \right)^2  \int h^2 \hat{v}^2_x dx \\ 
& \qquad =: \cE (h) \, . 
\end{aligned}
\end{equation} 
In the following, consider the two functions $h_0 (x) = 1$ and $h_1 (x) = e^{-\frac c{2\nu} x}$. Notice that $\cE (h_0) = 0$ and 
$\cE (h_1) = \nu\left(\frac{c}{2\nu} \right)^2  \int e^{-\frac c\nu x} \hat{v}^2_x dx > 0$. Consequently, the Schr\"odinger operator  $\nu \Delta u + bf^\prime (\hat{v})u$ 
is not negative definite on the subspace $\cN := \mbox{ span} \{ \hat{v}_x , e^{-\frac c{2\nu} x}\hat{v}_x \}$. $\hat{v}_x$ can be interpreted as the vector pointing in the tangential 
direction of the orbit of the travelling wave solutions, since $\frac{d}{dt} \hat{v} (\cdot + ct) = c \hat{v}_x (\cdot + ct)$ and the second function 
$h_1 (x) = e^{-\frac c{2\nu} x}$ measures the infinitesimal variation of the linearization of $\nu \Delta u + b \left( f(u+ \hat{v}) - f(\hat{v})\right)$ w.r.t. time. 
Notice that in the case $c=0$ of a stationary wave both functions coincide, since the linearization is independent of the time. 

\medskip 
\noindent 
Using the representation \eqref{eq2_1} we will now first consider the gradient form $\int h_x^2 w^2 \, dx$, where $w = e^{-\frac c{2\nu} x}\hat{v}_x$. The logarithmic derivative 
$$ 
\theta (x) := \frac{w_x (x)}{w(x)} = \frac c{2\nu} - \frac b\nu \frac{f(\hat{v})}{\hat{v}_x} (x)  = \frac{\hat{v}_{xx}}{\hat{v}_x} - \frac c{2\nu}
$$ 
of $w$ satisfies the inequality 
$$ 
\begin{aligned} 
- \theta '  + \theta^2 
& = - \frac d{dx} \frac{\hat{v}_{xx}}{\hat{v}_x} + \left( \frac{\hat{v}_{xx}}{\hat{v}_x} - \frac c{2\nu}\right)^2 \\ 
& = - \frac d{dx} \frac{\hat{v}_{xx}}{\hat{v}_x} + \left( \frac{\hat{v}_{xx}}{\hat{v}_x}\right)^2 -\frac c\nu \frac{\hat{v}_{xx}}{\hat{v}_x} + \left( \frac c{2\nu}\right)^2   
\ge \kappa +  \left( \frac c{2\nu}\right)^2  
\end{aligned} 
$$ 
for some $\kappa > 0$ according to Theorem \ref{th0}. Proposition \ref{PropHardy} below now implies the weighted Hardy type inequality 
\begin{equation} 
\label{WeightedHardy} 
\int h^2 w^2 \, dx \le \frac 1{\kappa +  \left( \frac c{2\nu}\right)^2} \int h_x^2\, w^2\, dx 
\end{equation} 
for any $h\in C_b^1 (\R)$ with $h(x_{0.5}) = 0$, where $x_{0.5}$ is the unique root of $\frac b\nu \frac{f(\hat{v}}{\hat{v}_x} (x) = \frac c{2\nu}$ (recall that 
$\frac{f(\hat{v})}{\hat{v}_x}$ is strictly monotone increasing). Clearly, the last inequality implies the Poincare inequality  
\begin{equation}
\label{Poincare}
\int h^2 w^2\, dx \le \frac 1{\kappa +  \left( \frac c{2\nu}\right)^2}  \int h_x^2 w^2\, dx + Z^{-1} \left( \int hw^2\, dx \right)^2 
\end{equation}
for the normalizing constant $Z = \int e^{-\frac c\nu x} \hat{v}_x^2\, dx$ and for any $h\in C_b^1 (\R)$. Unfortunately, this is not yet enough, since for $u = he^{\frac c{2\nu} x}\hat{v}_x$ 
we cannot control the tangential direction $\int h w^2\, dx = \int u e^{-\frac c{2\nu} x}\hat{v}_x\, dx$ but only the tangential direction  $\int h e^{\frac c{2\nu} x} w^2\, dx = \int u \hat{v}_x\, dx$. 
This is done in the following 

\begin{proposition} 
\label{prop2_1a} 
For $h\in C_b^1 (\R)$ the following inequality holds: 
\begin{equation}
\label{Step1-eq1} 
\int h^2\, w^2\, dx \le \frac 1{\kappa + \left( \frac c{2\nu}\right)^2} \int h_x^2\, w^2\, dx + C_\ast \left( \int he^{\frac c{2\nu}x}w^2\, dx \right)^2 \, . 
\end{equation}
with  
$$ 
C_{\ref{prop2_1a}} = \frac {\kappa + \left( \frac c{2\nu}\right)^2}\kappa  \frac{\int e^{-\frac c\nu x}\hat{v}_x^2\, dx}
{\left( \int e^{-\frac c{2\nu}x} \hat{v}_x^2\, dx\right)^2} \, . 
$$ 
\end{proposition}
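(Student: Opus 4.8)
The plan is to refine the Poincaré inequality \eqref{Poincare} — which already has the sharp gradient coefficient $1/(\kappa+(c/2\nu)^2)$ but carries the \emph{wrong} tangential functional $\int h w^2$ — into \eqref{Step1-eq1}, which carries $\int h e^{c/2\nu x}w^2$. The term $\int h w^2$ in \eqref{Poincare} expresses $L^2(w^2\,dx)$-orthogonality to the neutral mode $h_0\equiv 1$; to trade it for $\int h e^{c/2\nu x}w^2$ one brings in the second distinguished mode $h_1(x)=e^{-c/2\nu x}$, whose logarithmic derivative $(e^{-c/2\nu x})_x/e^{-c/2\nu x}\equiv -c/2\nu$ is constant, so that subtracting a multiple of $h_1$ from $h$ perturbs $\int h_x^2 w^2$ in a completely explicit way.

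Concretely, given $h\in C_b^1(\R)$ I would write $h=g_0+\beta\eta$ with $g_0:=h-h(x_{0.5})$, $\eta:=e^{-cx_{0.5}/2\nu}-e^{-c/2\nu x}$ and $\beta\in\R$ a free parameter. Both $g_0$ and $\eta$ vanish at $x_{0.5}$, hence so does $g_0+\beta\eta$, so the weighted Hardy inequality \eqref{WeightedHardy} — extended by approximation from $C_b^1(\R)$ to the finite-energy space, since $\eta$ is unbounded as $x\to-\infty$ — yields $\int(g_0+\beta\eta)^2w^2\le\frac{1}{\kappa+(c/2\nu)^2}\int(h_x+\beta\eta_x)^2w^2$. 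Expanding both sides, using $\int g_0^2 w^2=\int h^2w^2-2h(x_{0.5})\int hw^2+h(x_{0.5})^2\int w^2$ and $\eta_x=\frac{c}{2\nu}e^{-c/2\nu x}$, this rearranges into $\int h^2w^2\le\frac{1}{\kappa+(c/2\nu)^2}\int h_x^2w^2+E(\beta)$, where $E$ is a polynomial of degree at most two in $\beta$. Its leading coefficient equals $\frac{1}{\kappa+(c/2\nu)^2}\int\eta_x^2w^2-\int\eta^2w^2$, and this is non-negative: indeed $\eta$ vanishes at $x_{0.5}$, so \eqref{WeightedHardy} applied to $\eta$ itself gives $\int\eta^2w^2\le\frac{1}{\kappa+(c/2\nu)^2}\int\eta_x^2w^2$. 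This sign is precisely what guarantees that optimizing in $\beta$ will not spoil the sharp constant $1/(\kappa+(c/2\nu)^2)$ in front of $\int h_x^2w^2$.

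Since $E$ is convex in $\beta$, its minimum over $\beta$ is attained, and the final step is to check $\min_\beta E(\beta)\le C_{\ref{prop2_1a}}\bigl(\int h e^{c/2\nu x}w^2\bigr)^2$. Here one substitutes the optimal $\beta$ and simplifies using $\int e^{-c/2\nu x}\cdot e^{c/2\nu x}w^2\,dx=\int w^2\,dx$, $\int (e^{-c/2\nu x})_x^2 w^2\,dx=\frac{c^2}{4\nu^2}\int e^{-c/\nu x}w^2\,dx$, and integration by parts to express the cross-terms of $E$ through $\int h e^{c/2\nu x}w^2$; the normalizing constants $\int e^{-c/\nu x}\hat{v}_x^2$ and $\int e^{-c/2\nu x}\hat{v}_x^2$ from the statement reappear as $\int w^2$ and $\int e^{c/2\nu x}w^2$, and the factor $\frac{\kappa+(c/2\nu)^2}{\kappa}$ comes out of the same ratio $(\kappa+(c/2\nu)^2)/\kappa$ that controls the leading coefficient of $E$. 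I expect this last bookkeeping step to be the main obstacle: one has to verify that, after minimizing over $\beta$, every contribution that is not proportional to $\bigl(\int h e^{c/2\nu x}w^2\bigr)^2$ either cancels or has the favorable sign, and it is at this point that the full strength of \eqref{WeightedHardy} and \eqref{Poincare} (equivalently, that the spectral gap of the diffusion with invariant measure $w^2\,dx$ is at least $\kappa+(c/2\nu)^2$) must be used, not merely its consequence for the constant direction. Making the subtraction of the unbounded function $e^{-c/2\nu x}$ rigorous is a subsidiary technical point.
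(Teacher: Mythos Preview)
Your overall shape---subtract from $h$ a multiple of a fixed direction vanishing at $x_{0.5}$, apply the Hardy inequality \eqref{WeightedHardy}, and optimize---matches the paper's, but the choice $\eta\sim e^{-\mu x}$ (with $\mu=c/2\nu$) is wrong, and this makes your ``bookkeeping step'' impossible rather than merely tedious. Write $\lambda=\kappa+\mu^2$ and $L=\partial_{xx}+2\theta\partial_x$ for the generator with invariant measure $w^2\,dx$. After integration by parts the cross-term between $h$ and any subtracted direction $\varphi$ is $\frac{1}{\lambda}\int h\,[(\lambda-L)\varphi]\,w^2\,dx$; for the remainder to collapse to a multiple of $\bigl(\int h\,e^{\mu x}w^2\bigr)^2$ one therefore needs $(\lambda-L)\varphi\propto e^{\mu x}$. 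But $(\lambda-L)e^{-\mu x}=(\kappa+2\mu\theta(x))\,e^{-\mu x}$, which is neither a multiple of $e^{\mu x}$ nor even of $e^{-\mu x}$, since $\theta$ is non-constant. Concretely, your $E(\beta)$ carries for every $\beta$ the constant term $2h(x_{0.5})\int h w^2-h(x_{0.5})^2\int w^2$ and a linear term involving $\int h\,e^{-\mu x}w^2=\int h\,e^{-3\mu x}\hat v_x^2$; these are functionals of $h$ that are linearly independent of $\int h\,e^{\mu x}w^2=\int h\,e^{-\mu x}\hat v_x^2$ and cannot be dominated by its square uniformly in $h$. No amount of Hardy or Poincar\'e will convert them without spoiling the sharp coefficient $1/\lambda$ in front of $\int h_x^2 w^2$.

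The paper fixes this by taking as subtracted direction the resolvent $g=\lambda(\lambda-L)^{-1}e^{\mu x}$, constructed in Lemma~\ref{lem1a} via the associated one-dimensional diffusion as $g(x)=\lambda\int_0^\infty e^{-\lambda t}E\bigl[e^{\mu X_t(x)}\bigr]\,dt$. By construction the cross-term is then exactly $\int h\,e^{\mu x}w^2$. Two further properties of $g$ are needed and are obtained probabilistically: the pointwise bound $|g_x|\le\mu g$ follows from $DX_t(x)=\exp\bigl(-\int_0^t 2\tfrac{b}{\nu}\tfrac{d}{dx}\tfrac{f(\hat v)}{\hat v_x}(X_s)\,ds\bigr)<1$, which uses the strict log-concavity of $\hat v_x$ from Proposition~\ref{prop0}(i), and gives $\frac{1}{\lambda}\int g_x^2 w^2-\int g^2 w^2\le -\frac{\kappa}{\lambda}\int g^2 w^2$; the lower bound $\int g^2 w^2\ge\bigl(\int e^{\mu x}w^2\bigr)^2\big/\int w^2$ comes from invariance of $w^2\,dx$ and Jensen's inequality. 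Together these three facts yield $C_{\ref{prop2_1a}}$ directly. The missing idea in your attempt is precisely this resolvent construction: an explicit exponential cannot play the role of $g$.
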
 

\medskip 
\noindent 
The proof of Proposition \ref{prop2_1a} requires the following lemma.

\begin{lemma} 
\label{lem1a} 
There exists a function $g\in C^1 (\R)\cap L^2 (\R, w^2\, dx)$, $g\ge 0$, satisfying the equation 
\begin{equation} 
\label{lem1a_eq1}
\left( \kappa + \left( \frac c{2\nu}\right)^2\right)  g - \left( g_{xx} + \left(\frac c\nu - \frac b\nu \frac{f(\hat{v})}{\hat{v}_x} \right)g_x\right) 
= \left( \kappa + \left( \frac c{2\nu}\right)^2\right) e^{\frac c{2\nu} x} \, . 
\end{equation} 
Moreover, $|g_x (x)|\le \frac c{2\nu} g(x)$ for all $x\in\R$ and we have the lower bound 
$\int g^2 w^2\, dx \ge\frac{\left( \int e^{-\frac c{2\nu}x} \hat{v}_x^2\, dx\right)^2}{\int e^{-\frac c\nu x}\hat{v}_x^2\, dx}$.  
\end{lemma}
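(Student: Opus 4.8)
The plan is to construct $g$ as the resolvent of the weighted second-order operator acting on the fixed right-hand side. Write $\rho := \frac c\nu - \frac b\nu \frac{f(\hat v)}{\hat v_x}$, so that the equation reads $(\kappa+(\frac c{2\nu})^2)g - (g_{xx}+\rho g_x) = (\kappa+(\frac c{2\nu})^2)e^{\frac c{2\nu}x}$. The operator $L u := u_{xx} + \rho u_x$ is symmetric in $L^2(\mathbb R, e^{\int \rho})$, and $e^{\int_0^x \rho(s)\,ds}$ is (up to a constant) exactly $e^{\frac c\nu x}\hat v_x^2/\hat v_x(0)^2$ since $\rho = \frac c\nu + \frac d{dx}\log\hat v_x - \frac c{2\nu}\cdot 0$... more precisely $\int_0^x\rho = \frac c\nu x + \log(\hat v_x(x)/\hat v_x(0))$, giving weight $e^{\frac c\nu x}\hat v_x$. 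First I would set up the variational problem: minimize $\frac12\int (g_x^2 + (\kappa+(\frac c{2\nu})^2)g^2)\,d\mu - (\kappa+(\frac c{2\nu})^2)\int g\,e^{\frac c{2\nu}x}\,d\mu$ over the form domain, where $d\mu = e^{\frac c\nu x}\hat v_x\,dx$ (after noting $e^{\frac c{2\nu}x}\in L^2(d\mu)$, which follows from Proposition \ref{prop0}(iii) since $e^{\frac c{2\nu}x}w = e^{\frac c\nu x}\hat v_x$ and $\int e^{-\frac c\nu x}(e^{\frac c\nu x}\hat v_x)^2 = \int e^{\frac c\nu x}\hat v_x^2 < \infty$). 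Strict convexity and coercivity of the functional (the spectral bound $\kappa+(\frac c{2\nu})^2>0$ from Theorem \ref{th0} via $-\theta'+\theta^2\ge\kappa+(\frac c{2\nu})^2$ handles the Hardy-type lower bound) yield a unique minimizer $g$; elliptic regularity gives $g\in C^1$ and that $g$ solves the ODE classically. Since the right-hand side is nonnegative and the operator satisfies a maximum principle, $g\ge 0$. Translating back from $d\mu$ to $dx$ rewrites the ODE in the stated divergence-free form.

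For the derivative bound $|g_x|\le\frac c{2\nu}g$: I would compare $g$ with the explicit subsolution/supersolution $e^{\pm\frac c{2\nu}x}$. Observe that $\phi_+ := e^{\frac c{2\nu}x}$ satisfies $\phi_+'' + \rho\phi_+' = (\frac c{2\nu})^2\phi_+ + \rho\frac c{2\nu}\phi_+ = \big((\frac c{2\nu})^2 + \frac c{2\nu}\rho\big)e^{\frac c{2\nu}x}$, and since $\rho$ ranges in $(\frac c\nu-\gamma_+, \frac c\nu-\gamma_-)$ with $\gamma_-<\frac c{2\nu}<\gamma_+$ (Proposition \ref{prop0}(ii) combined with the Nagumo-type sign analysis), one checks $(\kappa+(\frac c{2\nu})^2)\phi_+ - (\phi_+''+\rho\phi_+')$ has a sign making $\phi_+$ a supersolution, and similarly $e^{-\frac c{2\nu}x}$ a subsolution, forcing $e^{-\frac c{2\nu}x}\le g(x)/g(0)\cdot(\text{const})\le e^{\frac c{2\nu}x}$ on the appropriate rays; then a Gronwall/comparison argument on $g_x/g$ (whose equation is a Riccati equation derived from the ODE) pins $|g_x/g|\le\frac c{2\nu}$ everywhere. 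Alternatively, and more robustly, differentiate the integral representation $g(x) = (\kappa+(\frac c{2\nu})^2)\int G(x,y)e^{\frac c{2\nu}y}\,d\mu(y)$ against the Green's function $G$ and estimate $|\partial_x G|\le \frac c{2\nu} G$ directly from the explicit form of $G$ built from the two homogeneous solutions.

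Finally the lower bound $\int g^2 w^2\,dx \ge \big(\int e^{-\frac c{2\nu}x}\hat v_x^2\,dx\big)^2/\int e^{-\frac c\nu x}\hat v_x^2\,dx$: testing the weak form of the equation against $g$ itself gives $(\kappa+(\frac c{2\nu})^2)\int g^2\,d\mu + \int g_x^2\,d\mu = (\kappa+(\frac c{2\nu})^2)\int g\,e^{\frac c{2\nu}x}\,d\mu$, so in particular $\int g^2\,d\mu \le \int g\,e^{\frac c{2\nu}x}\,d\mu$; combined with Cauchy–Schwarz $\int g\,e^{\frac c{2\nu}x}\,d\mu \le (\int g^2\,d\mu)^{1/2}(\int e^{\frac c\nu x}\,d\mu)^{1/2}$ this only bounds things from above, so instead I would test against the constant function $1$: $(\kappa+(\frac c{2\nu})^2)\int g\,d\mu = (\kappa+(\frac c{2\nu})^2)\int e^{\frac c{2\nu}x}\,d\mu$, i.e. $\int g\,d\mu = \int e^{\frac c{2\nu}x}\,d\mu = \int e^{-\frac c{2\nu}x}\hat v_x^2\,dx$ (the cross term $\int g_x\cdot 0$ vanishes), and then Cauchy–Schwarz $\big(\int g\,d\mu\big)^2 \le \int g^2\,d\mu\cdot\int 1\,d\mu = \int g^2 w^2\,dx\cdot\int e^{\frac c\nu x}\hat v_x\,dx$... which has the wrong weight. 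The correct normalization is $\int g^2 w^2\,dx = \int g^2 e^{-\frac c\nu x}\hat v_x^2\,dx$ whereas $d\mu = e^{\frac c\nu x}\hat v_x\,dx$; so one should instead apply Cauchy–Schwarz to $\int g\,e^{\frac c{2\nu}x}\cdot e^{-\frac c\nu x}\hat v_x^2\,dx$ recognizing it equals $\int g\,w^2\,dx$ (not $\int g\,d\mu$), and use the weak form tested against $e^{\frac c{2\nu}x}$ to evaluate it. The main obstacle is exactly this bookkeeping of which weight appears where and identifying the right test function; once the test-function $1$ (in the $d\mu$ pairing) is seen to produce $\int g\,e^{\frac c{2\nu}x}w^2\,dx = \int e^{-\frac c\nu x}\hat v_x^2\,dx$ after unwinding, Cauchy–Schwarz $\big(\int g\,e^{\frac c{2\nu}x}w^2\big)^2\le \int g^2 w^2\cdot\int e^{\frac c\nu x}w^2 = \int g^2 w^2\cdot\int e^{-\frac c{2\nu}x}\hat v_x^2\cdot(\ldots)$ closes it. I expect the construction (existence/regularity/positivity) to be routine variational theory; the genuinely delicate point is the gradient estimate $|g_x|\le\frac c{2\nu}g$, which needs the sharp spectral constant from Theorem \ref{th0} and the $\gamma_\pm$ bounds of Proposition \ref{prop0}, and must hold uniformly at both ends where $\rho$ approaches its extreme values $\frac c\nu-\gamma_\mp$.
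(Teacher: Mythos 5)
Your overall architecture --- construct $g$ as the resolvent of the drift--diffusion operator applied to $e^{\frac c{2\nu}x}$, exploit a symmetrizing measure, and obtain the lower bound by testing against the constant function plus Cauchy--Schwarz --- is the same as the paper's, which realizes the resolvent probabilistically as $g(x)=(\kappa+(\frac c{2\nu})^2)\int_0^\infty e^{-(\kappa+(\frac c{2\nu})^2)t}E\bigl(e^{\frac c{2\nu}X_t(x)}\bigr)\,dt$ for the diffusion $X_t$ with drift $\frac c\nu-\frac b\nu\frac{f(\hat v)}{\hat v_x}$. But two of the three assertions are not actually established by your sketch. The genuine gap is the gradient bound $|g_x|\le\frac c{2\nu}g$, which you yourself flag as the delicate point. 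Your supersolution candidate fails: $(\kappa+(\frac c{2\nu})^2)e^{\frac c{2\nu}x}-(\partial_{xx}+\rho\partial_x)e^{\frac c{2\nu}x}=(\kappa-\frac c{2\nu}\rho)e^{\frac c{2\nu}x}$ dominates the right-hand side only where $\rho\le-\frac c{2\nu}$, whereas $\rho=\frac{\hat v_{xx}}{\hat v_x}$ is positive on the entire half-line left of the maximum of $\hat v_x$. Moreover, sandwiching $g$ between exponentials would not control $g_x/g$ pointwise, the Riccati step is only named, and $|\partial_xG|\le\frac c{2\nu}G$ for the Green's function is essentially the statement to be proved rather than something readable off its explicit form. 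The input that actually makes this step work is one you never invoke: strict monotonicity of $\frac{f(\hat v)}{\hat v_x}$ (Proposition \ref{prop0}(i), i.e.\ log-concavity of $\hat v_x$), which makes the drift strictly decreasing, so the derivative of the stochastic flow satisfies $0<DX_t(x)<1$; differentiating the representation of $g$ under the expectation then gives $|g_x|\le\frac c{2\nu}g$ at once. The $\gamma_\pm$ bounds and the sharp spectral constant you cite are not the relevant mechanism here.

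Secondly, your symmetrizing measure is misidentified and the error propagates. Since $\frac c\nu-\frac b\nu\frac{f(\hat v)}{\hat v_x}=\frac{\hat v_{xx}}{\hat v_x}$, one has $\int_0^x\rho=\log\bigl(\hat v_x(x)/\hat v_x(0)\bigr)$ with no additional $\frac c\nu x$ term, so $d\mu=e^{\frac c\nu x}\hat v_x\,dx$ is not the right weight; your integrability check then computes a different integral from the one your claim requires, and $\int e^{\frac c\nu x}\hat v_x^2\,dx$ is in any case not guaranteed finite by Proposition \ref{prop0}(iii) (it needs $\gamma_+>\frac{3c}{2\nu}$). Consequently your closing Cauchy--Schwarz never lands on the stated constant, as you concede when you note the mismatched weights and leave the bookkeeping open. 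The weight around which the lemma is built is $w^2\,dx=e^{-\frac c\nu x}\hat v_x^2\,dx$: with respect to it one has $e^{\frac c{2\nu}x}\in L^2(w^2\,dx)$ because $\int e^{\frac c\nu x}w^2\,dx=\int\hat v_x^2\,dx<\infty$, invariance gives $\int g\,w^2\,dx=\int e^{\frac c{2\nu}x}w^2\,dx=\int e^{-\frac c{2\nu}x}\hat v_x^2\,dx$, and a single Cauchy--Schwarz against $\int w^2\,dx=\int e^{-\frac c\nu x}\hat v_x^2\,dx$ yields exactly the claimed lower bound. The existence, regularity and positivity part of your plan is sound in outline, but as written the proposal proves neither the gradient estimate nor the lower bound.
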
 

\begin{proof} 
Fix a 1D-Brownian motion $(W_t)_{t\ge 0}$ defined on some underlying probability space $(\Omega , \cA , P)$. For all initial 
conditions $x\in \R$ let $X_t (x)$ be the unique strong solution of the stochastic differential equation 
\begin{equation} 
\label{lem2_1a_eq2} 
dX_t (x) = \left(\frac c\nu - \frac b\nu \frac{f(\hat{v})}{\hat{v}_x}(X_t (x))\right)\, dt + dW_t \, , X_0 (x) = x\, . 
\end{equation}  
The family of solutions is a Markov process on $\R$ having invariant measure $w^2\, dx$, i.e., 
$$ 
\int_{\R} E\left( h(X_t (h) \right) w^2\, dx = \int_{\R} h\, w^2\, dx\, , t\ge 0\, . 
$$ 
It follows that the associated semigroup of transition operators $p_t h(x) := E\left( h(X_t (x))\right)$ induces a contraction 
semigroup of Markovian integral operators on $L^p (\R , w^2\, dx)$ for all $p\in [1, \infty ]$. 

\medskip 
\noindent 
Theorem V.7.4 in \cite{Kr95} yields that the function 
$$ 
g(x) := \left( \kappa + \left( \frac c{2\nu}\right)^2\right)  \int_0^\infty e^{-\left( \kappa + \left( \frac c{2\nu}\right)^2\right) t } E\left( e^{\frac c{2\nu} X_t (x)} \right) \, dt
$$ 
is twice continuously differentiable and solves equation \eqref{lem1a_eq1}. Since 
$e^{\frac c{2\nu}x}\in L^2 (\R, w^2\, dx )$ we also have that $g\in L^2 (\R, w^2\, dx)$. 

\medskip 
\noindent 
We will show next the pointwise estimate of the derivative $g_x$. The solution $X_t (x)$ of the stochastic differential 
equation \eqref{lem2_1a_eq2} is differentiable w.r.t. its initial condition $x$. Its differential $DX_t (x)$ is the solution of the linear linear differential equation 
$$ 
dDX_t (x) = - 2\frac d{dx} \frac b\nu \frac{f(\hat{v})}{\hat{v}_x} \left( X_t (x) \right) DX_t (x)\, dt \, , DX_0 (x) = 1\, , 
$$ 
with explicit solution 
$$ 
DX_t (x) = \exp \left( - 2\frac b\nu \int_0^t \frac d{dx} \frac{f(\hat{v})}{\hat{v}_x}  \left( X_s (x) \right)\, ds \right) < 1
$$ 
for all $t > 0$, since $\frac d{dx} \frac b\nu \frac{f(\hat{v})}{\hat{v}_x} > 0$ according to Proposition \ref{prop0}. Consequently, 
$$ 
\begin{aligned} 
g_x (x)  
& = \frac c{2\nu}\left( \kappa + \left( \frac c{2\nu}\right)^2\right)   \int_0^\infty  e^{- \left( \kappa + \left( \frac c{2\nu}\right)^2\right) t} E\left( e^{\frac c{2\nu} X_t (x)} DX_t (x) \right) \, dt \\ 
& = \frac c{2\nu}\left( \kappa + \left( \frac c{2\nu}\right)^2\right)   \int_0^\infty  e^{- \left( \kappa + \left( \frac c{2\nu}\right)^2\right) t} E\left( e^{\frac c{2\nu} X_t (x) 
- 2\frac b\nu \int_0^t \frac d{dx} \frac b\nu \frac{f(\hat{v})}{\hat{v}_x} \left( X_s (x) \right)\, ds}\right) \, dt 
\end{aligned} 
$$ 
which implies that 
$$
|g_x (x)| < \frac c{2\nu} \left( \kappa + \left( \frac c{2\nu}\right)^2\right)  \int_0^\infty  e^{- \left( \kappa + \left( \frac c{2\nu}\right)^2\right)t } E\left( e^{\frac c{2\nu} X_t (x)} \right)\, dt 
= \frac c{2\nu} g(x) \, . 
$$ 

\medskip 
\noindent 
It remains to prove the lower bound. To this end note that invariance of the measure $w^2\, dx$ implies 
$$ 
\begin{aligned} 
\int g^2 w^2\, dx 
& \ge \left( \int w^2\, dx \right)^{-1} \left( \int g w^2\, dx \right)^2 \\ 
& =  \left( \int w^2\, dx \right)^{-1} \left( \left( \kappa + \left( \frac c{2\nu}\right)^2\right) \int_0^\infty e^{-\left( \kappa + \left( \frac c{2\nu}\right)^2\right) t} 
\int p_t \left( e^{\frac c{2\nu} x} \right) w^2\, dx\, dt\, \right)^2 \\ 
& = \left( \int w^2\, dx \right)^{-1} \left(\left( \kappa + \left( \frac c{2\nu}\right)^2\right) \int_0^\infty e^{-\left( \kappa + \left( \frac c{2\nu}\right)^2\right) t} \, dt  \int e^{\frac c{2\nu} x} w^2\, dx\, \right)^2 \\ 
& = \left( \int w^2\, dx \right)^{-1}\left( \int e^{\frac c{2\nu} x} w^2\, dx\, \right)^2   \, .  
\end{aligned} 
$$ 
\end{proof}

\bigskip
\begin{proof} (of Proposition \ref{prop2_1a}).   
Let $\tilde{h} := h - \frac{h(x_1)}{g(x_1)} g$, hence $\tilde{h}(x_1) = 0$. 
Then Proposition \ref{PropHardy} implies that 
$$ 
\int \tilde{h}^2\, w^2\, dx \le \frac 1{\kappa + \left( \frac c{2\nu}\right)^2} \int \tilde{h}_x^2\, w^2\, dx 
$$ 
or equivalently, 
$$ 
\int h^2 w^2 \, dx \le \frac 1{\kappa + \left( \frac c{2\nu}\right)^2} \int h_x^2\, w^2\, dx + T (h) 
$$ 
with the remainder 
$$ 
\begin{aligned} 
T(h) 
& := \frac 1{\kappa + \left( \frac c{2\nu}\right)^2} \left( -2 \int h_x  \frac{h(x_1)}{g(x_1)} g_x w^2 dx  + 
\left( \frac{h(x_1)}{g(x_1)} \right)^2 \int g_x^2 w^2\, dx \right) \\
& \quad + 2\int h\frac{h(x_1)}{g(x_1)} g w^2 dx  - \left( \frac{h(x_1)}{g(x_1)} \right)^2 \int g w^2\, dx  
\end{aligned} 
$$ 
Using Lemma \ref{lem1a} we obtain that 
$$ 
\begin{aligned} 
T(h) 
& = 2 \frac{h(x_1)}{g(x_1)} \int \left(  g -\frac 1{\kappa + \left( \frac c{2\nu}\right)^2} \left( g_{xx} - \left(\frac c\nu - 2\frac b\nu \frac{f(\hat{v})}{\hat{v}_x} \right) g_x
\right) \right) h\, w^2\, dx \\ 
& \qquad + \left( \frac{h(x_1)}{g(x_1)} \right)^2 \left( \frac 1{\kappa + \left( \frac c{2\nu}\right)^2} \int g_x^2\, w^2\, dx - \int g^2\, w^2\, dx \right) \\ 
& \le  2\frac{h(x_1)}{g(x_1)} \int e^{\frac c{2\nu} x} h\, w^2\, dx 
- \left( \frac{h(x_1)}{g(x_1)} \right)^2 \frac{\kappa}{\kappa + \left( \frac c{2\nu}\right)^2} \int g^2 \, w^2\, dx \, . 
\end{aligned} 
$$ 
In the last inequality we have used the pointwise estimate $|g_x(x)|\le \frac c{2\nu} g(x)$. Using the lower bound 
$\int g^2 w^2\, dx \ge   \frac{\left( \int e^{-\frac c{2\nu}x} \hat{v}_x^2\, dx\right)^2}{\int e^{-\frac c\nu x}\hat{v}_x^2\, dx}$ obtained in the previous Lemma we conclude that 
$$ 
\begin{aligned} 
T(h) & \le \frac{\kappa + \left( \frac c{2\nu}\right)^2}{\kappa} \left( \int g^2 w^2\, dx \right)^{-1} \left( \int h e^{\frac c{2\nu}x}w^2\, dx\right)^2  \\ 
& \le C_{\ref{prop2_1a}}  \left( \int h e^{\frac c{2\nu}x}w^2\, dx\right)^2 
\end{aligned} 
$$ 
with 
$$ 
C_{\ref{prop2_1a}} = \frac {\kappa + \left( \frac c{2\nu}\right)^2}\kappa 
\frac{\int e^{-\frac c\nu x}\hat{v}_x^2\, dx}{\left( \int e^{-\frac c{2\nu}x} \hat{v}_x^2\, dx\right)^2} 
$$ 
which implies the assertion. 
\end{proof} 

\medskip 
\noindent 
Having Proposition \ref{prop2_1a} we can now state the following 

\begin{proposition} 
\label{prop2_1} 
Let $u\in C_c^1 (\R)$ and write $u = hw$ for $h\in C_c^1 (\R)$. Then 
$$ 
\begin{aligned} 
\langle \nu \Delta u + b f^\prime (\hat{v} ) u,u\rangle 
& \le -\nu \frac{\kappa}{\kappa + \left( \frac c{2\nu}\right)^2}  \int h_x^2 w^2\, dx \\
& \qquad + \nu\left(\frac c{2\nu}\right)^2 C_{\ref{prop2_1a}} \left( \int u \hat{v}_x\, dx\right)^2 \, . 
\end{aligned} 
$$
\end{proposition}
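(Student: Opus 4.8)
The starting point is the identity \eqref{eq2_1}, which expresses
$-\langle \nu\Delta u + bf'(\hat v)u,u\rangle$ as the quadratic form
$\cE(h) = \nu\int (he^{\frac c{2\nu}x})_x^2 e^{-\frac c\nu x}\hat v_x^2\,dx - \nu(\frac c{2\nu})^2\int h^2\hat v_x^2\,dx$.
Writing $w = e^{-\frac c{2\nu}x}\hat v_x$ this is exactly
$\cE(h) = \nu\int (he^{\frac c{2\nu}x})_x^2 w^2\,dx - \nu(\frac c{2\nu})^2\int h^2 e^{\frac c\nu x} w^2\,dx$.
The plan is to substitute $\tilde h := h e^{\frac c{2\nu}x}$ so that the gradient term becomes the clean expression $\nu\int \tilde h_x^2 w^2\,dx$, at the cost of turning the zero-order term into $\nu(\frac c{2\nu})^2\int \tilde h^2 w^2\,dx$ (since $h^2 e^{\frac c\nu x} = \tilde h^2$). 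Thus
$\cE(h) = \nu\int \tilde h_x^2 w^2\,dx - \nu\bigl(\tfrac c{2\nu}\bigr)^2\int \tilde h^2 w^2\,dx$,
and we need a lower bound on this quantity in terms of $\int \tilde h_x^2 w^2\,dx$ and the tangential moment $\int \tilde h w^2\,dx = \int u\hat v_x\,dx$.

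The key input is Proposition \ref{prop2_1a} applied to $\tilde h$: it gives
$\int \tilde h^2 w^2\,dx \le \frac{1}{\kappa+(\frac c{2\nu})^2}\int \tilde h_x^2 w^2\,dx + C_{\ref{prop2_1a}}\bigl(\int \tilde h e^{\frac c{2\nu}x}w^2\,dx\bigr)^2$,
where $\int \tilde h e^{\frac c{2\nu}x}w^2\,dx = \int h e^{\frac c\nu x}w^2\,dx = \int u\hat v_x\,dx$ (using $u = hw$ and $w^2 e^{\frac c\nu x} = \hat v_x^2$, so $\int h e^{\frac c\nu x}w^2\,dx = \int h w\,\hat v_x\,dx\cdot$ — more directly $\tilde h e^{\frac c{2\nu}x}w^2 = h e^{\frac c\nu x}w^2 = h\hat v_x^2/1$, hmm, let me state it as: $\tilde h e^{\frac c{2\nu}x} w^2 = h e^{\frac c\nu x} w^2 = h \hat v_x^2 = u \hat v_x$ since $u = hw = h e^{-\frac c{2\nu}x}\hat v_x$ and $h\hat v_x^2 = (u e^{\frac c{2\nu}x}/\hat v_x)\hat v_x^2 \ne u\hat v_x$). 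I would recheck this bookkeeping carefully; the cleanest route is to track everything in terms of $\tilde h$ and $w$ and only at the very end note $\int \tilde h e^{\frac c{2\nu}x}w^2\,dx = \int u\hat v_x\,dx$, which follows because $\tilde h e^{\frac c{2\nu}x}w^2 = (he^{\frac c{2\nu}x})e^{\frac c{2\nu}x}(e^{-\frac c\nu x}\hat v_x^2) = h\hat v_x^2$ and — wait, one needs $u\hat v_x = hw\hat v_x = h e^{-\frac c{2\nu}x}\hat v_x^2$. So in fact the correct tangential moment produced is $\int \tilde h w^2\,dx$, not $\int \tilde h e^{\frac c{2\nu}x}w^2\,dx$; I would double-check which of the two Poincaré-type inequalities (\ref{Poincare}) or (\ref{Step1-eq1}) is the one actually needed, matching against the factor $e^{\frac c{2\nu}x}$ in the moment. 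The paper's Proposition \ref{prop2_1a} is set up precisely so that the moment $\int h e^{\frac c{2\nu}x}w^2\,dx$ appears, and applying it to $\tilde h$ gives the moment $\int \tilde h e^{\frac c{2\nu}x}w^2\,dx = \int u\hat v_x\,dx$, so this is exactly what is needed.

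Granting the moment identification, the endgame is pure algebra: multiply the Proposition \ref{prop2_1a} inequality (for $\tilde h$) by $\nu(\frac c{2\nu})^2$ and subtract it from $\nu\int \tilde h_x^2 w^2\,dx$ to obtain
$\cE(h) \ge \nu\Bigl(1 - \frac{(\frac c{2\nu})^2}{\kappa+(\frac c{2\nu})^2}\Bigr)\int \tilde h_x^2 w^2\,dx - \nu\bigl(\tfrac c{2\nu}\bigr)^2 C_{\ref{prop2_1a}}\bigl(\int u\hat v_x\,dx\bigr)^2 = \nu\,\frac{\kappa}{\kappa+(\frac c{2\nu})^2}\int \tilde h_x^2 w^2\,dx - \nu\bigl(\tfrac c{2\nu}\bigr)^2 C_{\ref{prop2_1a}}\bigl(\int u\hat v_x\,dx\bigr)^2$.
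Since $\cE(h) = -\langle \nu\Delta u + bf'(\hat v)u,u\rangle$ and $\int \tilde h_x^2 w^2\,dx = \int (he^{\frac c{2\nu}x})_x^2 e^{-\frac c\nu x}\hat v_x^2\,dx$, rearranging gives precisely the claimed inequality with $\int h_x^2 w^2\,dx$ on the right — here one must note the Proposition states $\int h_x^2 w^2\,dx$ (in the original $h$, not $\tilde h$), so I would also verify that the gradient term controlling the estimate is correctly presented, or equivalently reconcile the two gradient expressions via the substitution. Finally, the case $c=0$ is trivial since then the zero-order term vanishes and $\cE(h) = \nu\int h_x^2 w^2\,dx \ge 0$. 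The main obstacle is none of the analysis — it is the careful bookkeeping of the exponential weights under the substitution $\tilde h = he^{\frac c{2\nu}x}$ and the correct identification of the tangential moment; once that is pinned down the inequality drops out by a one-line combination of \eqref{eq2_1} and Proposition \ref{prop2_1a}.
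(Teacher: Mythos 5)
Your proposal is correct and follows essentially the paper's own proof: the identity \eqref{eq2_1} rewritten in terms of the coefficient relative to $w$ (your $\tilde h$), combined with Proposition \ref{prop2_1a} and the identification $\int \tilde h e^{\frac{c}{2\nu}x}w^2\,dx = \int u\,\hat{v}_x\,dx$, yields the claim by exactly the one-line algebraic combination you describe. The bookkeeping worries you flag resolve themselves, because the Proposition's $h$ is defined by $u = hw$ and hence coincides with your $\tilde h$, so both the gradient term $\int h_x^2 w^2\,dx$ and the tangential moment in the statement match your final expressions.
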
 

\begin{proof} 
First note that $h\in C_c^1 (\R)$, and thus equations \eqref{eq2_1} and Proposition \ref{prop2_1a} imply that 
$$ 
\begin{aligned} 
& \langle \nu \Delta u + b f^\prime (\hat{v} ) u,u\rangle 
  = - \nu  \int h^2_x w^2 \, dx + \nu\left(\frac{\nu}{2c} \right)^2  \int h^2 w^2 dx \\ 
& \qquad \le - \nu \frac{\kappa}{\kappa + \left( \frac c{2\nu}\right)^2} \int h_x^2\, w^2\, dx  
         + \nu\left(\frac c{2\nu}\right)^2  C_{\ref{prop2_1a}} \left( \int \tilde{h} e^{\frac c{2\nu}x} w^2\, dx\right)^2 \\ 
& \qquad = - \nu \frac{\kappa}{\kappa + \left( \frac c{2\nu}\right)^2} \int h_x^2\, w^2\, dx  
         + \nu\left(\frac c{2\nu}\right)^2  C_{\ref{prop2_1a}} \left( \int u \hat{v}_x \, dx\right)^2 \, . 
\end{aligned} 
$$
\end{proof}

\noindent 
In the next step we will show that for $u\in C_c^1 (\R)$ and $u = h\hat{v}_x$ its $V$-norm 
$\|u\|_V$ can be controlled by $\int \left( he^{\frac c{2\nu} x}\right)^2_x w^2\, dx$. 

\begin{lemma}
\label{lem2_1} 
Let $u\in C_c^1 (\R )$ and write $ u = hw$. Then 
$$ 
\|u\|_V^2 \le q_1 \int h^2_x w^2\, dx + q_2 \langle u , \hat{v}_x\rangle^2 
$$ 
where 
$$ 
q_1 := \left( 1 + \left( \frac {b\eta}{\nu} + 1\right) \frac 1{\kappa +  \left( \frac c{2\nu}\right)^2} \right)\, , 
\qquad 
q_2 := \left( \frac{b\eta}{\nu} + 1\right)  C_{\ref{prop2_1a}} \, , 
$$ 
and 
$$
\eta := \max_{v\in [0,1]} f^\prime (v)\, . 
$$ 
\end{lemma}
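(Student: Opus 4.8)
The plan is to split $\|u\|_V^2=\int u^2\,dx+\int u_x^2\,dx$ and bound the two pieces separately, in each case reducing everything to the weighted quantity $\int h_x^2 w^2\,dx$ via the identity \eqref{eq2_1} (equivalently, the first line of the proof of Proposition \ref{prop2_1}) together with Proposition \ref{prop2_1a}. Throughout, $h$ is the function with $u=hw$, so that $u=h e^{-\frac{c}{2\nu}x}\hat v_x$; a small but essential bookkeeping point is that the tangential term appearing in Proposition \ref{prop2_1a} is exactly $\langle u,\hat v_x\rangle$, since $h e^{\frac{c}{2\nu}x}w^2=h e^{-\frac{c}{2\nu}x}\hat v_x^2=u\hat v_x$, hence $\int h e^{\frac{c}{2\nu}x}w^2\,dx=\langle u,\hat v_x\rangle$.

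First I would treat the $L^2$-part. Since $\int u^2\,dx=\int h^2 w^2\,dx$, Proposition \ref{prop2_1a} together with the identification above gives immediately
$$
\int u^2\,dx\le \frac{1}{\kappa+\left(\frac c{2\nu}\right)^2}\int h_x^2 w^2\,dx+C_{\ref{prop2_1a}}\langle u,\hat v_x\rangle^2 .
$$
Next I would treat the gradient part. Integration by parts for $u\in C_c^1(\R)$ gives $\nu\int u_x^2\,dx=-\langle\nu\Delta u,u\rangle=-\langle\nu\Delta u+bf'(\hat v)u,u\rangle+b\int f'(\hat v)u^2\,dx$. Because $\hat v$ takes values in $[0,1]$ we have $f'(\hat v(x))\le\eta=\max_{v\in[0,1]}f'(v)$ for all $x$, so $b\int f'(\hat v)u^2\,dx\le b\eta\int h^2 w^2\,dx$; and from \eqref{eq2_1} (as used at the start of the proof of Proposition \ref{prop2_1}),
$$
-\langle\nu\Delta u+bf'(\hat v)u,u\rangle=\nu\int h_x^2 w^2\,dx-\nu\left(\tfrac c{2\nu}\right)^2\int h^2 w^2\,dx\le\nu\int h_x^2 w^2\,dx .
$$
Dividing by $\nu$ yields $\int u_x^2\,dx\le\int h_x^2 w^2\,dx+\frac{b\eta}{\nu}\int h^2 w^2\,dx$.

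Adding the two estimates gives $\|u\|_V^2\le\int h_x^2 w^2\,dx+\left(1+\frac{b\eta}{\nu}\right)\int h^2 w^2\,dx$. Inserting into this the bound on $\int h^2 w^2\,dx$ from Proposition \ref{prop2_1a} and collecting, on the one hand, the coefficient of $\int h_x^2 w^2\,dx$, namely $1+\left(\frac{b\eta}{\nu}+1\right)\frac{1}{\kappa+\left(\frac c{2\nu}\right)^2}=q_1$, and on the other hand the coefficient of $\langle u,\hat v_x\rangle^2$, namely $\left(\frac{b\eta}{\nu}+1\right)C_{\ref{prop2_1a}}=q_2$, produces exactly the claimed inequality. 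There is no genuine obstacle in this argument; the only care needed is to keep straight that $h$ is defined by $u=hw$ rather than $u=h\hat v_x$, and the identification $\int h e^{\frac{c}{2\nu}x}w^2\,dx=\langle u,\hat v_x\rangle$ that feeds Proposition \ref{prop2_1a} into the estimate.
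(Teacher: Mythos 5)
Your proof is correct and follows essentially the same route as the paper: bound $\nu\int u_x^2\,dx$ by $-\langle\nu\Delta u+bf'(\hat v)u,u\rangle+b\eta\|u\|_H^2$ using \eqref{eq2_1}, then absorb $\|u\|_H^2=\int h^2w^2\,dx$ via Proposition \ref{prop2_1a} and the identification $\int he^{\frac{c}{2\nu}x}w^2\,dx=\langle u,\hat v_x\rangle$. The bookkeeping between the two conventions $u=h\hat v_x$ and $u=hw$ that you flag is indeed the only delicate point, and you handle it correctly.
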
 

\begin{proof} 
Using \eqref{eq2_1} we have that 
$$ 
\begin{aligned} 
\nu \int u_x^2\, dx 
& = - \langle \nu\Delta u + b f^\prime (\hat{v} ) u , u\rangle + b\langle f^\prime (\hat{v}) u , u \rangle \\ 
& \le \nu \int h^2_x w^2\, dx + b\eta \|u\|_H^2 \, . 
\end{aligned}  
$$
Proposition \ref{prop2_1a} now implies  
$$ 
\begin{aligned} 
\|u\|_V^2 
& \le \left( 1 + \left( \frac {b\eta}{\nu} + 1\right) \frac 1{\kappa +  \left( \frac c{2\nu}\right)^2} \right) \int h^2_x w^2\, dx  \\
& \qquad    + \left( \frac{b\eta}{\nu} + 1\right) C_{\ref{prop2_1a}}  \langle u,\hat{v}_x\rangle^2 \, , 
\end{aligned} 
$$ 
which implies the assertion. 
\end{proof}

\begin{proof} (of Theorem \ref{th1}) First let $u\in C_c^1 (\R )$. Then Proposition \ref{prop2_1} implies the estimate 
$$ 
\begin{aligned} 
\langle \nu \Delta u + bf^\prime (\hat{v}) u , u \rangle 
& \le - \nu \frac{\kappa}{\kappa + \left( \frac c{2\nu}\right)^2} \int h_x^2\, w^2\, dx \\ 
& \qquad + \nu \left(\frac c{2\nu}\right)^2 C_{\ref{prop2_1a}}\langle u , \hat{v}_x\rangle^2 \, . 
\end{aligned} 
$$ 
Combining the last estimate with the previous Lemma \ref{lem2_1}, we obtain that 
$$ 
\begin{aligned} 
\langle \nu \Delta u + b f^\prime (\hat{v})u,u\rangle 
& \le -  \frac{\kappa}{\kappa + \left( \frac c{2\nu}\right)^2} \frac {\nu}{q_1} \|u\|_V^2 \\
& \qquad   + \left(\frac\kappa{\kappa +  \left( \frac c{2\nu}\right)^2} \frac{\nu q_2}{q_1} 
+ \nu \left(\frac c{2\nu}\right)^2 C_{\ref{prop2_1a}}  \right) \langle u,\hat{v}_x\rangle^2\, 
\end{aligned} 
$$ 
which implies Theorem \ref{th1} with 
$$ 
\kappa_\ast = \frac{\kappa}{\kappa + \left( \frac c{2\nu}\right)^2} \frac {\nu}{q_1}
$$ 
and 
$$ 
C_\ast = \left( \kappa_\ast q_2 + \frac{\nu}{\kappa} \left(\frac c{2\nu}\right)^2 \left( \kappa + \left(\frac c{2\nu}\right)^2\right)  
\frac{\int e^{-\frac c\nu x }\hat{v}_x^2\, dx}{\left(\int e^{-\frac c{2\nu}x }\hat{v}_x^2\, dx \right)^2}  \right)\, . 
$$ 
\end{proof} 

\medskip 
\noindent 
It remains to prove the weighted Hardy type inequality \eqref{WeightedHardy} which is of independent interest. 

\begin{proposition} 
\label{PropHardy} 
Let $w\in C_b^2 (\R)$ and $\theta = \frac{w_x}{w}$. Suppose that  
$$ 
\inf_{x\in\R} - \theta ' (x) + \theta^2 (x) \ge \kappa_0 > 0 
$$ 
and that there exists $\hat{x}$ such that $\theta (\hat{x}) = 0$. Then 
$$ 
\int h^2\, w^2\, dx \le \frac 1\kappa \int h_x^2\, w^2\, dx 
$$ 
for any $h\in C_b^1 (\R)$ with $h(\hat{x}) = 0$. 
\end{proposition}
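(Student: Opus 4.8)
\textbf{Proof proposal for Proposition \ref{PropHardy}.}
The plan is to reduce to a one–sided inequality and then run the classical ground–state (Hardy) substitution, with the hypothesis entering only to guarantee the non–oscillation of the associated Sturm--Liouville equation. First I would reduce: translating, assume $\hat x=0$; since $h(0)=0$, both integrals split over $(-\infty,0]$ and $[0,\infty)$, so it suffices to prove
\[
\int_0^\infty h^2w^2\,dx\le\frac1{\kappa_0}\int_0^\infty h_x^2w^2\,dx ,\qquad h\in C_b^1,\ h(0)=0,
\]
the half–line $(-\infty,0]$ being identical after the reflection $x\mapsto -x$, which preserves $\theta(0)=0$ and $-\theta'+\theta^2\ge\kappa_0$. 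The claim is invariant under $w\mapsto cw$, so normalize $w(0)=1$. A direct computation shows that $\psi:=1/w$ satisfies $\psi''/\psi=-\theta'+\theta^2\ge\kappa_0$ and $\psi'(0)=-\theta(0)\psi(0)=0$; hence $\psi$ is strictly convex with minimum at $0$, so $\psi'>0$ and $\theta<0$ on $(0,\infty)$, and comparison with $y''=\kappa_0 y$, $y(0)=1$, $y'(0)=0$ gives $\psi(x)\ge\cosh(\sqrt{\kappa_0}\,x)$. In particular $w$ decays at least exponentially, $w^2\,dx$ is a finite measure on $[0,\infty)$, and every boundary term at $+\infty$ occurring below vanishes.

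Next I would set up the substitution. Let $\varphi$ solve $(w^2\varphi')'+\kappa_0 w^2\varphi=0$ on $[0,\infty)$ with $\varphi(0)=0$, $\varphi'(0)=1$, and suppose for the moment that $\varphi>0$ on $(0,\infty)$. Writing $h=\varphi g$ (so $g=h/\varphi$ is $C^1$ up to $0$, with $g(0)=h'(0)$), integration by parts yields
\[
\int_0^\infty h_x^2w^2\,dx=\Bigl[\tfrac{\varphi'}{\varphi}h^2w^2\Bigr]_0^\infty+\int_0^\infty\frac{-(w^2\varphi')'}{\varphi}\,h^2\,dx+\int_0^\infty\varphi^2g_x^2\,w^2\,dx .
\]
The bracket vanishes (at $0$ because $\varphi\sim x$ while $h^2\sim x^2$, at $+\infty$ by the decay above), the middle term equals $\kappa_0\int_0^\infty h^2w^2\,dx$ by the equation for $\varphi$, and the last term is $\ge0$; this is exactly the required inequality.

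The hard part will be showing $\varphi>0$ on $(0,\infty)$, i.e.\ that $(w^2\varphi')'+\kappa_0 w^2\varphi=0$ is disconjugate on $[0,\infty)$; this is the only point where more than the pointwise bound $\kappa_0\le-\theta'+\theta^2$ is used. I would argue by a Sturm/Wronskian comparison: produce $\phi>0$ on $(0,\infty)$ with $\phi(0)=0$ and $(w^2\phi')'+\kappa_0 w^2\phi\le0$ (a nonnegative supersolution vanishing at $0$). Then $\Omega:=w^2(\varphi'\phi-\varphi\phi')$ satisfies $\Omega'=-(w^2\phi')'\varphi-\kappa_0 w^2\varphi\phi\ge0$ wherever $\varphi\ge0$, while $\Omega(0)=0$; at a hypothetical first zero $x_1>0$ of $\varphi$ one would have $\Omega(x_1)=w^2(x_1)\varphi'(x_1)\phi(x_1)<0$, contradicting that $\Omega$ is nondecreasing. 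A natural candidate is $\phi=(1/w)'=\psi'$, which is positive on $(0,\infty)$ and vanishes at $0$; the supersolution inequality then reduces, after a computation, to $q'\le(q-\kappa_0)(-\theta)$ with $q:=-\theta'+\theta^2$, equivalently to $(q-\kappa_0)/\psi$ being non-increasing on $(0,\infty)$. Verifying this last point (or, alternatively, deducing disconjugacy directly from the Riccati estimate $-\theta(x)\ge\sqrt{\kappa_0}\tanh(\sqrt{\kappa_0}\,x)$, which follows from $(-\theta)'+(-\theta)^2\ge\kappa_0$ and $\theta(0)=0$ by comparison) is the genuine work; everything else is bookkeeping. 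Adding the two one–sided inequalities gives the proposition.
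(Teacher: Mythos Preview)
Your ground-state substitution correctly reduces the half-line inequality to the disconjugacy of $(w^2\varphi')'+\kappa_0 w^2\varphi=0$ on $(0,\infty)$ with $\varphi(0)=0$, and you rightly flag this as ``the genuine work''---but you do not do it, and your first route is a dead end. The supersolution candidate $\phi=\psi'$ requires $q'\le(q-\kappa_0)(-\theta)$, and this fails in general: any bounded $q\ge\kappa_0$ arises as $-\theta'+\theta^2$ for an admissible $w$ (solve the Riccati equation for $\theta$ with $\theta(0)=0$, then set $w=1/\psi$); taking $q=\kappa_0+\varepsilon x^2$ near $0$ one has $-\theta(x)\sim\kappa_0 x$, so $q'\sim 2\varepsilon x$ while $(q-\kappa_0)(-\theta)\sim\varepsilon\kappa_0 x^3$, and the required inequality is violated for small $x>0$. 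Your alternative, the Riccati bound $-\theta\ge\sqrt{\kappa_0}\tanh(\sqrt{\kappa_0}\,x)$, can in fact be pushed through---one compares the Riccati equation $\sigma'=-\sigma^2+2(-\theta)\sigma-\kappa_0$ for $\sigma=\varphi'/\varphi$ against the explicit borderline solution $\sigma_0=\sqrt{\kappa_0}\coth(\sqrt{\kappa_0}\,x)$ to obtain $\sigma\ge\sigma_0>0$---but this second comparison (including the matching of the $1/x$ singularities at $x=0$) is itself a nontrivial argument that you have not supplied. As written, the proposal stops exactly where the difficulty begins; note also that disconjugacy on the half-line is, via the variational characterization of the principal Dirichlet eigenvalue, essentially equivalent to the inequality you set out to prove.

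The paper bypasses all of this with a direct weighted Cauchy--Schwarz. For $x\ge\hat x$ write $h(x)^2=\bigl(\int_{\hat x}^x h_x\bigr)^2\le\int_{\hat x}^x g^{-1}h_x^2\cdot\int_{\hat x}^x g$ with the specific weight $g:=(-\theta'+\theta^2)\,w(\hat x)/w$. The point is that $g$ has the \emph{exact} primitive $-\theta\,w(\hat x)/w$ (using $\theta(\hat x)=0$), while the hypothesis gives $g^{-1}\le\kappa_0^{-1}\,w/w(\hat x)$; multiplying by $w^2$, integrating over $[\hat x,\infty)$, and applying Fubini yields $\int_{\hat x}^\infty h^2w^2\le\kappa_0^{-1}\int_{\hat x}^\infty h_x^2w^2$ in a few lines, and symmetrically on $(-\infty,\hat x]$. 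No spectral or oscillation theory is needed.
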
 

\begin{proof} 
Define the function $g(x) := \left(-\theta ' (x) + \theta^2 (x)\right)\exp \left( - \int_{\hat{x}}^x \theta (s)\, ds \right)$ and notice that 
$$ 
\exp \left( - \int_{\hat{x}}^x \theta (s)\, ds \right) = \exp \left( - \log w (x) + \log w (\hat{x})\right)  
= \frac{w(\hat{x})}{w(x)}  
$$
and thus 
$$ 
g(x) = \left( - \theta ' (x) + \theta ^2 (x)\right) \frac{w(\hat{x})}{w(x)} \ge \kappa \frac{w(\hat{x})}{w(x)}\, . 
$$ 
Then for $x\ge \hat{x}$ we have that 
$$ 
\begin{aligned} 
\left( h(x) - h(\hat{x} )\right)^2 
& = \left( \int_{\hat{x}}^x h_x (s)\, ds \right)^2 
  \le \int_{\hat{x}}^x \frac 1{g(s)} h_x^2 (s)\, ds \int_{\hat{x}}^x g(s)\, ds \\ 
& = \int_{\hat{x}}^x \frac 1{g(s)} h_x^2 (s) \, ds \left( - \theta (x)\exp\left( -\int_{\hat{x}}^x \theta (s)\, ds\right) \right)  \\
& =  \int_{\hat{x}}^x \frac 1{g(s)} h_x^2 (s) \, ds \left( - \frac{w_x (x)}{w(x)} \frac{w (\hat{x} )}{w(x)}\right) \\ 
& \le \frac 1\kappa  \int_{\hat{x}}^x \frac{w(s)}{w(\hat{x})}  h_x^2 (s) \, ds \left( - \frac{w_x (x)}{w(x)} \frac{w (\hat{x} )}{w(x)}\right) \, .   
\end{aligned} 
$$ 
Integrating against $w^2\, dx$ for $x\ge \hat{x}$ now yields the following estimate 
\begin{equation} 
\label{eq_1_2_1} 
\begin{aligned} 
\int_{\hat{x}}^\infty \left( h - h(\hat{x} )\right)^2  w^2 \, dx 
& \le \frac 1\kappa \int_{\hat{x}}^\infty  \frac{w(s)}{w(\hat{x})} h_x^2 (s) \int_s^\infty - w_x (x) w(\hat{x})\, dx \, ds  \\
& =  \frac 1\kappa \int_{\hat{x}}^\infty h_x^2 (s) w^2 (s) \, ds  \, . 
\end{aligned} 
\end{equation} 
Similarly, for $x\le \hat{x}$ we have that 
$$ 
\begin{aligned} 
\left( h(\hat{x}) - h(x)\right)^2 
& = \left( \int_x^{\hat{x}} h_x (s)\, ds \right)^2 
  \le \int_x^{\hat{x}} \frac 1{g(s)} h_x^2 (s)\, ds \int_x^{\hat{x}} g(s)\, ds \\ 
& = \int_x^{\hat{x}} \frac 1{g(s)} h_x^2 (s) \, ds \left( \theta (x) \exp \left(- \int_{\hat{x}}^x \theta (s)\, ds\right) \right)  \\
& =  \int_x^{\hat{x}} \frac 1{g(s)} h_x^2 (s) \, ds \frac{w_x (x)}{w(x)} \frac{w (\hat{x} )}{w(x)} \\ 
& \le \frac 1\kappa  \int_x^{\hat{x}} \frac{w(s)}{w(\hat{x})}  h_x^2 (s) \, ds \frac{w_x (x)}{w(x)} \frac{w (\hat{x} )}{w(x)} \, .   
\end{aligned} 
$$ 
Integrating against $w^2\, dx$ now for $x\le \hat{x}$ yields 
\begin{equation} 
\label{eq_1_2_2} 
\begin{aligned} 
\int_{-\infty}^{\hat{x}} \left( h-h(\hat{x} )\right)^2 & w^2\, dx  \\  
& \le \frac 1\kappa \int_{-\infty}^{\hat{x}}  \frac{w(s)}{w(\hat{x})} h_x^2 (s) \int_{-\infty}^{\hat{x}} w_x (x) w(\hat{x})\, dx \, ds  \\
& =  \frac 1\kappa \int_{-\infty}^{\hat{x}} h_x^2 (s) w^2 (s) \, ds  \, . 
\end{aligned} 
\end{equation}  
The assertion now follows from estimates \eqref{eq_1_2_1} and \eqref{eq_1_2_2}. 
\end{proof}

\bigskip 
\noindent 
{\bf Acknowlegdement} This work is supported by the BMBF, FKZ 01GQ1001B.

\end{document}